\numberwithin{equation}{section}
\newtheorem{theorem}{Theorem}[section]
\newtheorem{lemma}[theorem]{Lemma}
\newtheorem{corollary}[theorem]{Corollary}
\newtheorem{proposition}[theorem]{Proposition}
\theoremstyle{definition}
\newtheorem{definition}[theorem]{Definition}
\newtheorem{example}[theorem]{Example}
\newtheorem{remark}[theorem]{Remark}
\DeclareMathOperator{\ind}{\mathds{1}}
\newcommand{\test}{\mathscr{D}}
\newcommand{\schw}{\mathscr{S}}
\newcommand{\fourier}{\mathscr{F}}
\newcommand{\schwconv}{\circledast}
\newcommand{\schwprod}{\odot}
\newcommand{\conv}{\boxasterisk}
\newcommand{\profile}{Y}
\newcommand{\pr}{\mathds{P}}
\newcommand{\ex}{\mathds{E}}
\newcommand{\G}{\mathds{G}}
\newcommand{\R}{\mathds{R}}
\newcommand{\Z}{\mathds{Z}}
\newcommand{\ph}{\varphi}
\newcommand{\eps}{\varepsilon}
\renewcommand{\le}{\leqslant}
\renewcommand{\ge}{\geqslant}
\newcommand{\dual}[1]{{#1}^\dagger}
\NewDocumentCommand{\formula}{ssom}{%
 \IfBooleanTF{#1}{%
  \IfBooleanTF{#2}{%
   \IfValueTF{#3}%
    {\begin{align}\label{#3}\begin{gathered}#4\end{gathered}\end{align}}%
    {\begin{align}#4\end{align}}%
  }{%
   \IfValueTF{#3}%
    {\begin{align}\label{#3}\begin{aligned}#4\end{aligned}\end{align}}%
    {\begin{gather*}#4\end{gather*}}%
  }%
 }{%
  \IfValueTF{#3}%
   {\begin{align}\label{#3}#4\end{align}}%
   {\begin{align*}#4\end{align*}}%
 }%
}
\newcommand{\ignore}[1]{}
\begin{document}

\title{Liouville's theorems for Lévy operators}
\author{Tomasz Grzywny, Mateusz Kwaśnicki}
\thanks{Work supported by the Polish National Science Centre (NCN) grants no.\@ 2017/27/B/ST1/01339~(TG) and 2019/33/B/ST1/03098~(MK)}
\address{Tomasz Grzywny, Mateusz Kwaśnicki \\ Faculty of Pure and Applied Mathematics \\ Wrocław University of Science and Technology \\ ul. Wybrzeże Wyspiańskiego 27 \\ 50-370 Wrocław, Poland}
\email{tomasz.grzywny@pwr.edu.pl,mateusz.kwasnicki@pwr.edu.pl}
\date{\today}
\keywords{Harmonic function, Lévy process, Liouville's theorem.}
\subjclass[2010]{%
  35B08, 
  35B09, 
  35B10, 
  35B53, 
  35R09, 
  58J65, 
  60G51, 
  60J35, 
  60J45. 
}

\begin{abstract}
Let $L$ be a Lévy operator. A function $h$ is said to be harmonic with respect to $L$ if $L h = 0$ in an appropriate sense. We prove Liouville's theorem for positive functions harmonic with respect to a general Lévy operator $L$: such functions are necessarily mixtures of exponentials. For signed harmonic functions we provide a fairly general result, which encompasses and extends all Liouville-type theorems previously known in this context, and which allows to trade regularity assumptions on $L$ for growth restrictions on $h$. Finally, we construct an explicit counterexample which shows that Liouville's theorem for signed functions harmonic with respect to a general Lévy operator $L$ does not hold.
\end{abstract}

\maketitle

%
%

\section{Introduction}
\label{sec:intro}


\subsection{Liouville's theorem and Lévy operators}
\label{sec:intro:levy}

A classical result due to Liouville and Cauchy, traditionally called \emph{Liouville's theorem}, states that every bounded harmonic function in $\R^d$ is constant. This was extended by Bôcher and Picard, who proved that a one-sided bound is sufficient: every positive harmonic function in $\R^d$ is constant. A yet another variant of Liouville's theorem asserts that every harmonic function bounded by a polynomial (and again a one-sided bound is sufficient) is in fact a harmonic polynomial.

Liouville's theorem has been extended in various directions. Here we study variants of Liouville's theorem for \emph{Lévy operators}, that is, operators $L$ of the form
\formula*[eq:levy:operator]{
 L f(x) & = \sum_{j, k = 1}^d a_{jk} \partial_{jk} f(x) + \sum_{j = 1}^d b_j \partial_j f(x) \\
 & \qquad + \int_{\R^d \setminus \{0\}} \biggl(f(x + y) - f(x) - \ind_B(y) \sum_{j = 1}^d y_j \partial_j f(x)\biggr) \nu(dy) ,
}
acting on an appropriate class of functions on $\R^d$. Here $B$ is the unit ball, $(a_{jk}) \in \R^{d \times d}$ is a nonnegative definite symmetric matrix, $(b_j) \in \R^d$ is a vector, and $\nu$ is a nonnegative measure on $\R^d \setminus \{0\}$ such that $\int_{\R^d \setminus \{0\}} \min\{1, |y|^2\} \nu(dy) < \infty$, the so-called \emph{Lévy measure}.

Various equivalent descriptions of the class of Lévy operators are available. In particular, the following conditions are equivalent: $L$ is a Lévy operator; $L$ is the generator of a \emph{Lévy process} $X_t$; $L$ generates a strongly continuous semigroup of translation invariant Markov operators; $-L$ is a Fourier multiplier whose symbol $\Psi$ is a continuous negative definite function vanishing at the origin; $L$ is a translation invariant integro-differential operator vanishing on constants and satisfying the maximum principle. We remark that the function $\Psi$ mentioned above is the characteristic exponent of the Lévy process $X_t$, given by the Lévy--Khintchine formula
\formula[eq:lk]{
 \Psi(\xi) & = a(\xi, \xi) - i b \xi + \int_{\R^d \setminus \{0\}} \bigl(1 - e^{i \xi y} + i \xi y \ind_B(y)\bigr) \nu(dy) ,
}
and for every $t \in [0, \infty)$, the characteristic function of the random variable $X_t$ is equal to $e^{-t \Psi}$. Here $a(\xi, \xi) = \sum_{j, k = 1}^d a_{jk} \xi_j \xi_k$, and $b \xi = \sum_{j = 1}^k b_j \xi_j$.

The Laplace operator $\Delta$ is a prime example of a Lévy operator, and a smooth function $f$ is harmonic in $\R^d$ if and only if $\Delta f = 0$ in $\R^d$. In a similar way, given a general Lévy operator $L$, a smooth function $f$ is said to be harmonic with respect to $L$, or \emph{$L$-harmonic}, if $L f = 0$ in $\R^d$. In probabilistic terms, the Lévy operator $L$ is the generator of a Lévy process $X_t$, and $f$ is $L$-harmonic if and only if $f(X_t)$ is a local martingale.

The question that we address in this article is: which variants of Liouville's theorem remain true for $L$-harmonic functions, where $L$ is a given Lévy operator? This problem was tackled by several authors over the last few years, and in fact its history can be traced back to the seminal work of M.~Riesz~\cite{riesz} on functions harmonic with respect to the fractional Laplace operator $-(-\Delta)^s$, another widely studied Lévy operator. Indeed: Liouville's theorem for bounded harmonic functions follows immediately from Harnack's inequality, and an appropriate variant of the latter result for the fractional Laplace operator is given in~\cite{riesz}.

The goal of this paper is three-fold. First, we prove a general Liouville's theorem for nonnegative $L$-harmonic functions: every such function is a mixture of $L$-harmonic exponentials; see Section~\ref{sec:intro:positive}. Next, we provide a general framework for proving Liouville's theorems for signed polynomially bounded $L$-harmonic functions by means of Fourier transformation. Under appropriate additional assumptions, we prove that all signed $L$-harmonic functions which correspond to tempered distributions have their spectrum (that is, the support of the Fourier transform) contained in the zero set of the characteristic exponent of $L$; see Section~\ref{sec:intro:signed}. Finally, we construct a surprising example which proves that one cannot completely drop the additional assumptions mentioned above: Liouville's theorem for signed, polynomially bounded $L$-harmonic functions does not hold in full generality; see Section~\ref{sec:intro:counter}.

In the introduction, we state simplified variants of our main results, and we provide references to the corresponding full statements given later in the text. We commonly impose the following standard assumption.

\begin{definition}
\label{def:subgroup}
A Lévy operator $L$ is said \emph{not to be concentrated on a proper closed subgroup of $\R^d$} if the Fourier symbol $\Psi$ of $-L$ (that is, the characteristic exponent of the corresponding Lévy process $X_t$) is equal to zero only at the origin.
\end{definition}

The above condition holds if and only if the closed subgroup $\G$ of $\R^d$ generated by the union of the supports of the distributions of $X_t - X_0$ is equal to $\R^d$. The subgroup $\G$ can be described as the smallest closed subgroup which contains: (a)~eigenspaces of the matrix $(a_{jk})$ which correspond to nonzero eigenvalues; (b)~the support of $\nu$; and (c)~an appropriately defined drift vector. This result is due to Tortrat~\cite{tortrat}; we refer to Théorème~3 in~\cite{simon} and to~\cite{adej} for details, as well as to Section~24 in~\cite{sato} (in particular, Definitions~24.13 and~24.21 and Proposition~24.14 therein) for further discussion. We also note that other names, such as \emph{nonlattice condition}, are commonly used for the condition described in Definition~\ref{def:subgroup}.

We remark that if a Lévy operator $L$ is concentrated on a proper closed subgroup $\G$ of $\R^d$, then $L$ acts on each coset $x + \G$ separately. In particular, every $\G$-periodic function on $\R^d$ (that is, a function $f$ such that $f(x + z) = f(x)$ for every $x \in \R^d$ and $z \in \G$) is $L$-harmonic. The proper way to state Liouville's theorem for a Lévy operator $L$ concentrated on $\G$ is to consider functions defined only on $\G$. Although we do not discuss this extension in the introduction, our main results carry over to this situation: for positive $L$-harmonic functions we state this explicitly in Section~\ref{sec:positive}, while for signed $L$-harmonic function this extension corresponds to Fourier symbols $\Psi$ with zero sets other than $\{0\}$, which are allowed in Section~\ref{sec:signed}.


\subsection{Positive $L$-harmonic functions}
\label{sec:intro:positive}

In order to state Liouville's theorem for positive $L$-harmonic functions, we need an auxiliary definition.

If $L$ is a Lévy operator, we denote by $\dual{L}$ the Lévy operator \emph{dual} to $L$, obtained by conjugation of $L$ with the reflection $x \mapsto -x$. More precisely, if $L$ is given by~\eqref{eq:levy:operator}, then $\dual{L}$ has a similar representation with the same matrix $(\dual{a}_{jk}) = (a_{jk})$, with vector $(\dual{b}_j) = (-b_j)$, and with Lévy measure $\dual{\nu}(A) = \nu(-A)$. We remark that $\dual{L}$ is an operator formally adjoint to the operator $L$, and if both are considered as unbounded operators on $L^2(\R^d)$, then $L$ and $\dual{L}$ are indeed adjoint operators.

\begin{definition}
\label{def:harmonic}
A locally integrable function $h$ is said to be \emph{$L$-harmonic} (in the weak sense) if
\formula{
 \int_{\R^d} h(x) \dual{L} \ph(x) dx & = 0
}
for every smooth, compactly supported function $\ph$. Here we implicitly assume absolute convergence of the integral in the left-hand side.
\end{definition}

In fact, we will allow for a slightly more general definition; see Lemma~\ref{lem:harmonic} in Section~\ref{sec:pre:levy} for a detailed discussion. Of course, whenever we say that an $L$-harmonic function $h$ in the above sense is constant or equal to a polynomial, in fact we mean equality almost everywhere with respect to the Lebesgue measure.

Recall that $B$ is the unit ball in $\R^d$. It is relatively simple to see that if $h$ has continuous second-order partial derivatives,
\formula{
 & \int_{\R^d \setminus B} |h(x + y)| \nu(dy)
}
is a locally integrable function of $x \in \R^d$, and $L h = 0$ in $\R^d$ in the pointwise sense, then $h$ is $L$-harmonic in the weak sense.

The following statement, which builds upon and extends Theorem~5.7 in~\cite{bs}, is our first main result.

\begin{theorem}[see Theorem~\ref{thm:positive:liouville}]
\label{thm:positive:liouville:0}
Let $L$ be a Lévy operator which is not concentrated on a proper closed subgroup of $\R^d$, and let $h$ be a nonnegative function on $\R^d$ which satisfies $L h = 0$ in the weak sense. Then
\formula{
 h(x) & = \int_\Lambda e^{\lambda x} \mu(d\lambda)
}
for a unique nonnegative measure $\mu$ on the set $\Lambda$ of those vectors $\lambda \in \R^d$ for which the function $e_\lambda(x) = e^{\lambda x}$ satisfies $L e_\lambda(x) = 0$.
\end{theorem}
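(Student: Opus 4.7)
The plan is to reduce the theorem, via mollification, to a Choquet--Deny-type representation for nonnegative functions invariant under the convolution semigroup $(P_t)_{t \ge 0}$ generated by $L$. Concretely, I would first regularize $h$ by setting $h_\eta(x) := \int_{\R^d} h(x+y)\, \eta(y)\, dy$ for a nonnegative, smooth, compactly supported mollifier $\eta$ with $\int \eta = 1$. Applying the weak definition of $L$-harmonicity (Definition~\ref{def:harmonic}) to translates of $\eta$ shows $h_\eta \in C^\infty(\R^d)$ with $L h_\eta \equiv 0$ pointwise; a Dynkin-type argument, using positivity of $h_\eta$ to control behaviour at infinity through Fatou or monotone convergence, then upgrades this to the stronger semigroup invariance $P_t h_\eta = h_\eta$ for every $t \ge 0$.

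Next, I would invoke a Choquet--Deny-type representation theorem (in the spirit of Theorem~5.7 in~\cite{bs}): any nonnegative measurable $f$ satisfying $P_t f = f$ for every $t \ge 0$ is of the form $f(x) = \int_\Lambda e^{\lambda x}\, \mu_f(d\lambda)$, with $\mu_f$ unique under the subgroup hypothesis on $L$. The central ingredient is the identification of extreme rays of the convex cone of $P_t$-invariant nonnegative functions. Translation invariance of $P_t$ forces any extreme $f$ to satisfy $f(\cdot+y) = c(y) f$ for every $y$ with some scalar $c(y) > 0$, hence the multiplicative functional equation $f(x+y) = f(x) f(y) / f(0)$; its measurable nonnegative solutions are exponentials $e_\lambda$, and by the Lévy--Khintchine formula~\eqref{eq:lk} one checks that $P_t e_\lambda = e_\lambda$ for all $t$ if and only if $\lambda \in \Lambda$. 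Applied to $f = h_\eta$, this yields $h_\eta(x) = \int_\Lambda e^{\lambda x}\, \mu_\eta(d\lambda)$.

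Finally, I would remove the mollification. Associativity of convolution gives $(h_{\eta_1})_{\eta_2} = h_{\eta_1 * \eta_2}$, and a direct computation gives $(e_\lambda)_\eta(x) = \tilde\eta(\lambda) e^{\lambda x}$ with $\tilde\eta(\lambda) := \int e^{\lambda y}\eta(y)\, dy > 0$. Combined with uniqueness of the representation these identities force $\mu_\eta(d\lambda) = \tilde\eta(\lambda)\, \mu(d\lambda)$ for a single nonnegative measure $\mu$ on $\Lambda$ independent of $\eta$. Letting $\eta_n \to \delta_0$ along a mollifying sequence gives $\tilde\eta_n \to 1$ pointwise on $\Lambda$ and $h_{\eta_n} \to h$ at Lebesgue points of $h$, so monotone convergence delivers $h(x) = \int_\Lambda e^{\lambda x}\, \mu(d\lambda)$.

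The main obstacle is the Choquet--Deny step: the cone of $P_t$-invariant nonnegative functions has no natural compact base, so obtaining the integral representation requires either a reduction to a cone with a compact base (by normalizing, say, $f(0)=1$ and working in a suitable weak topology) or a hands-on construction of $\mu_f$, e.g.\ via Laplace-transform techniques on appropriate half-spaces. The subgroup hypothesis enters precisely here, ensuring both the existence of the representation and that the family $\{e_\lambda : \lambda \in \Lambda\}$ separates points sufficiently for $\mu$ to be unique. A secondary technical point is the semigroup-invariance upgrade in the first step, which requires care because $h_\eta$ need not be globally integrable or bounded.
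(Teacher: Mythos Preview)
Your route via the heat semigroup $P_t$ is precisely the Berger--Schilling approach the paper is trying to \emph{improve upon}, and the difficulty you flag as ``secondary'' is in fact the central obstruction. From $L h_\eta = 0$ pointwise you obtain that $h_\eta(X_t)$ is a nonnegative local martingale, hence a supermartingale; Fatou or monotone convergence then gives only $P_t h_\eta \le h_\eta$. Upgrading this to the equality $P_t h_\eta = h_\eta$ is equivalent to showing that the local martingale is a true martingale, which requires $h_\eta$ to be integrable against the transition kernel $p_t(x,\cdot)$. Nothing in the hypotheses guarantees this: the theorem assumes no growth bound on $h$, and the paper explicitly notes that Theorem~5.7 in~\cite{bs} needs $h$ to be bounded by a submultiplicative function integrable against $\nu$ on $\R^d\setminus B$ exactly for this reason. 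Without that bound your Choquet--Deny step never gets off the ground, since you only have excessive, not invariant, functions.

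The paper's argument sidesteps the issue by replacing $P_t$ with the \emph{harmonic measure} $\check{H}_D$ for a bounded open set $D\ni 0$. The point is that the associated Green measure $\check{G}_D$ is finite and \emph{compactly supported}, so the distributional convolution $(\check{L}\conv h)\conv\check{G}_D$ is automatically associative regardless of the growth of $h$. Dynkin's formula in the form $\delta_0=\check{H}_D-\check{L}\conv\check{G}_D$ then yields $h*\check{H}_D=h$ directly. One now applies Deny's theorem~\cite{deny} to this single convolution equation (after arranging, via a convex combination over countably many domains $D_n$, that the support of the relevant probability measure generates all of $\R^d$), and finally checks that the exponentials picked out by Deny's theorem coincide with those satisfying $L e_\lambda=0$. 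No mollification and no semigroup are needed.
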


The above result follows directly from the more general Theorem~\ref{thm:positive:liouville}, which allows $h$ to be an arbitrary Schwartz distribution such that Definition~\ref{def:harmonic} makes sense, and $L$ to be concentrated on a proper closed subgroup $\G$ of $\R^d$.

Theorem~\ref{thm:positive:liouville:0} for Lévy operators which generate random walks was proved by Deny in~\cite{deny} (see Théorème~3 therein). In terms of the representation~\eqref{eq:levy:operator}, this result corresponds to $a_{jk} = 0$, $b_j = 0$ and $\nu$ a finite measure. We stress that this special case is a crucial ingredient of our proof. A variant of Theorem~\ref{thm:positive:liouville:0} was proved by Berger and Schilling in~\cite{bs} (see Theorem~5.7 therein) with the additional assumption that $h$ is bounded by a submultiplicative function integrable with respect to the Lévy measure $\nu$ over the complement of the unit ball. An alternative proof was given recently by Berger, Schilling and Shargorodsky in~\cite{bss} (see Theorem~13 therein).

Since every bounded mixture of exponentials is constant, Theorem~\ref{thm:positive:liouville:0} in particular implies that bounded $L$-harmonic functions are necessarily constant, provided that $L$ is not concentrated on a proper closed subgroup of $\R^d$. This variant of Theorem~\ref{thm:positive:liouville:0} has some history. For the fractional Laplace operator, this result follows directly from Harnack's inequality proved by M.~Riesz in~\cite{riesz} (see formula~(5) in Chapter~V therein); an essentially equivalent argument was given by Bogdan, Kulczycki and Nowak in~\cite{bkn} (see Lemma~3.2 therein). Similar Liouville's theorem for Lévy operators with measure $\nu(dy)$ decaying exponentially fast at infinity and comparable with $|y|^{-d - 1} dy$ near the origin was given by Barlow, Bass and Gui in~\cite{bbg} (see Theorem~1.17 therein), while the result for rather general Lévy operators with nondegenerate second order local term was proved by Priola and Zabczyk in~\cite{pz} (see Theorem~3.8 therein). The case of general Lévy operators $L$ was solved completely by Alibaud, del Teso, Endal and Jakobsen in~\cite{adej} (see Theorems~1.1 and~1.2 therein) using analytical methods. We remark that the probabilistic argument applied by Berger and Schilling in~\cite{bs} (see Theorem~4.4 therein for the statement for bounded $L$-harmonic functions) is quite different and more general.

It is worthy of mention that from the potential-theoretic perspective, proving a Liouville's theorem for positive $L$-harmonic functions is equivalent to the identification of \emph{minimal} $L$-harmonic functions in $\R^d$. The latter problem is meaningful for a much wider class of linear operators $L$, and by Choquet's theory, positive $L$-harmonic functions are always integral mixtures of minimal $L$-harmonic functions. This is known as \emph{Martin representation}, after a very influential work of Martin~\cite{martin} for classical harmonic functions. Various results of this type are available, and we mention~\cite{ar,bkk,ksv-1,ksv-2,ksv-3,rw} as a sample of references dealing with Lévy operators in unbounded domains.


\subsection{Signed $L$-harmonic functions}
\label{sec:intro:signed}

In order to study signed $L$-harmonic functions, we apply Fourier transform methods. These are limited to the class of tempered distributions, and for this reason we need to modify slightly the definition of an $L$-harmonic function. We denote by $\schw$ the Schwartz class of rapidly decreasing smooth functions on $\R^d$, and by $\schw'$ the class of tempered distributions.

\begin{definition}
\label{def:harmonic:schw}
A locally integrable function $h$ with at most polynomial growth at infinity, or, more generally, a tempered distribution $h$, is said to be \emph{$L$-harmonic} (in the sense of tempered distributions) if
\formula{
 \int_{\R^d} h * \psi(x) \dual{L} \ph(x) dx & = 0
}
for every $\ph, \psi \in \schw$. Here we implicitly assume absolute convergence of the integral in the left-hand side.
\end{definition}

We remark that if $h$ is an $L$-harmonic function in the weak sense (according to Definition~\ref{def:harmonic}, and if additionally
\formula{
 & |h(x)| + \int_{\R^d \setminus B} |h(x + y)| \nu(dy)
}
is bounded by a polynomial (as a function of $x \in \R^d$), then $h$ is $L$-harmonic in the sense of tempered distributions (according to Definition~\ref{def:harmonic:schw}); see Lemma~\ref{lem:harmonic:schw} in Section~\ref{sec:pre:levy}.

A Lévy operator $L$ is a convolution operator. The corresponding convolution kernel is an appropriate tempered distribution, which we denote by $\check{L}$. The distributional Fourier transform $\fourier \check{L}$ of the tempered distribution $\check{L}$ is a continuous function: it is equal to $-\Psi$, the Fourier symbol of $L$. Recall that, in probabilistic terms, $\Psi$ is the characteristic exponent of the corresponding Lévy process $X_t$.

For simplicity, below we assume that $L$ is not concentrated on a proper closed subgroup of $\R^d$, that is, $\Psi(\xi) \ne 0$ for $\xi \ne 0$. We stress, however, that the most general version of our result stated in Theorem~\ref{thm:liouville} also covers general Lévy operators, as well as other convolution operators, as long as their Fourier symbols are continuous functions in an appropriate class.

Before we state our main result in this section, Theorem~\ref{thm:liouville:0}, it is instructive to describe briefly our argument. Suppose that $h$ is an $L$-harmonic function in the sense of tempered distributions. Our proof of Liouville's theorem for signed $L$-harmonic functions consists of the following steps.
\begin{enumerate}[label=(\Roman*)]
\item\label{it:step:a} Observe that $\check{L}$ and $h$ are \emph{convolvable} (as tempered distributions), and
\formula{
 \check{L} * h & = L h = 0 .
}
\item\label{it:step:b} Use the \emph{Fourier exchange formula} to find out that the \emph{muliplicative product} of Fourier transforms of $\check{L}$ and $h$ is well-defined (in the sense of tempered distributions), and
\formula{
 \fourier \check{L} \cdot \fourier h & = \fourier (\check{L} * h) = 0 .
}
\item\label{it:step:c} Use an appropriate variant of \emph{Wiener's $1/f$ theorem} to deduce that $\fourier h = 0$ on $\R^d \setminus \{0\}$.
\item\label{it:step:d} Conclude that $h$ is a polynomial
\end{enumerate}
Steps~\ref{it:step:a} and~\ref{it:step:d} present no difficulties, while step~\ref{it:step:b} is a known result in the theory of tempered distributions; we refer to Sections~\ref{sec:pre:distr} and~\ref{sec:pre:levy} for a detailed discussion. The only problematic part in the above argument is step~\ref{it:step:c}.

We remark that if $\Psi = -\fourier \check{L}$ is a smooth function in $\R^d \setminus \{0\}$, then also step~\ref{it:step:c} is completely standard, and in this way we recover the variant of Liouville's theorem given in Theorem~3.2 in~\cite{bs}.

If we assume that $h$ is a bounded function, then the usual Wiener's $1/f$ theorem can be easily adapted to make step~\ref{it:step:c} rigorous. This leads to a significantly shorter proof of Liouville's theorem originally given in Theorem~1.1 in~\cite{adej} and, independently, in Theorem~4.4 in~\cite{bs}.

The two examples discussed above are in some sense extreme cases. We also provide intermediate variants. However, we need to keep balance between smoothness conditions on $\Psi$ and growth restrictions of $h$. Detailed statements are given in Corollary~\ref{cor:liouville:0} below.

A rigorous statement of our general result requires the following two auxiliary definitions.

\begin{definition}
\label{def:algebra}
We say that an algebra $W$ of continuous functions on $\R^d$ is a \emph{Wiener-type algebra} if:
\begin{enumerate}[label=(\alph*)]
\item\label{it:al:1} every element of $W$ is a tempered distribution;
\item\label{it:al:2} if $\Phi \in W$ and $\ph \in \schw$, then $\ph \cdot \Phi \in W$;
\item\label{it:al:3} if $K \subseteq \R^d$ is a compact set, $\Phi \in W$ and $\Phi(\xi) \ne 0$ for every $\xi \in K$, then there is $\tilde{\Phi} \in W$ such that $\Phi(\xi) \tilde{\Phi}(\xi) = 1$ for every $\xi \in K$.
\end{enumerate}
We say that a tempered distribution $\Psi$ belongs to $W$ \emph{locally} on an open set $U$ if for every compact set $K \subseteq U$ there is a tempered distribution $\Phi \in W$ such that $\Psi = \Phi$ in a neighbourhood of $K$.
\end{definition}

We note that the Schwartz class $\schw$, or the class of Fourier transforms of integrable functions (that is, the usual Wiener algebra), are Wiener-type algebras.

Clearly, conditions~\ref{it:al:1} and~\ref{it:al:2} are rather natural, and typically they are easy to check. The essential property of Wiener-type algebras is given in condition~\ref{it:al:3}, which can be thought of as a variant of Wiener's $1/f$ theorem.

\begin{definition}
\label{def:act}
We say that a tempered distribution $H$ \emph{acts} on a Wiener-type algebra $W$ if for every $\Phi, \Psi \in W$ we have the following identity of multiplicative products of tempered distributions:
\formula{
 (H \cdot \Phi) \cdot \Psi & = H \cdot (\Phi \cdot \Psi) .
}
In particular, we require that all products of tempered distributions in the above identity are well-defined.
\end{definition}

The following result apparently covers all known Liouville-type theorems on signed $L$-harmonic functions for Lévy operators $L$, and, together with Corollary~\ref{cor:liouville:0} below, it is our second main result.

\begin{theorem}[see Theorem~\ref{thm:liouville}]
\label{thm:liouville:0}
Let $W$ be a Wiener-type algebra of continuous functions on $\R^d$, and let $L$ be a Lévy operator which is not concentrated on a proper closed subgroup of $\R^d$. Suppose that the Fourier symbol $-\Psi$ of $L$ belongs to $W$ locally on $\R^d \setminus \{0\}$. Let $h$ be an $L$-harmonic function in the sense of tempered distributions, and suppose that $\fourier h$ acts on $W$. Then $f$ is a polynomial.
\end{theorem}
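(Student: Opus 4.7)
The plan is to implement the four-step strategy (I)--(IV) sketched immediately before the statement, deferring the formal manipulations of tempered distributions to the preliminary sections and concentrating the new content in step~(III).

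Steps~(I), (II) and~(IV) are routine and I would dispatch them first. For step~(I), writing $L$ as convolution with its kernel $\check{L}$ turns $L$-harmonicity in the sense of Definition~\ref{def:harmonic:schw} into the identity $\check{L} * h = 0$ in $\schw'$. For step~(II), the Fourier exchange formula for convolvable tempered distributions (to be developed in the preliminary sections) ensures that the multiplicative product $\fourier\check{L} \cdot \fourier h = -\Psi \cdot \fourier h$ is a well-defined tempered distribution and equals $\fourier(\check{L} * h) = 0$. Step~(IV) rests on the classical fact that any tempered distribution supported in $\{0\}$ is a finite linear combination of derivatives of $\delta_0$, whose inverse Fourier transforms are precisely the polynomials.

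The core work is step~(III): showing that $\supp \fourier h \subseteq \{0\}$. I would fix an arbitrary $\xi_0 \in \R^d \setminus \{0\}$, so that $\Psi(\xi_0) \ne 0$ by the nonconcentration hypothesis, and prove that $\fourier h$ vanishes on a neighbourhood of $\xi_0$. Choose a compact neighbourhood $K$ of $\xi_0$ disjoint from the origin, use the local membership hypothesis to pick $\Phi \in W$ agreeing with $-\Psi$ on an open set $U \supseteq K$, and select a cutoff $\chi \in \schw$ with $\chi \equiv 1$ on a neighbourhood of $K$ and $\supp \chi \subseteq U$. Property~\ref{it:al:2} yields $\chi\Phi \in W$, and since $\chi\Phi = -\chi\Psi$ pointwise, multiplying the identity $-\Psi \cdot \fourier h = 0$ from step~(II) by the Schwartz function $\chi$ gives $\fourier h \cdot (\chi\Phi) = 0$. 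Property~\ref{it:al:3} then supplies $\tilde\Phi \in W$ with $(\chi\Phi)\tilde\Phi = 1$ on $K$, and the assumption that $\fourier h$ acts on $W$ (Definition~\ref{def:act}) produces the chain
\formula{
 0 & = \bigl(\fourier h \cdot (\chi\Phi)\bigr) \cdot \tilde\Phi = \fourier h \cdot \bigl((\chi\Phi)\tilde\Phi\bigr) .
}
Since $(\chi\Phi)\tilde\Phi \in W$ is identically $1$ on $K$, the rightmost expression agrees with $\fourier h$ on the interior of $K$, and hence $\fourier h$ vanishes in a neighbourhood of $\xi_0$, as required.

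The hard part will be the bookkeeping for the multiplicative products in $\schw'$ appearing in step~(III): verifying that each product is well-defined, justifying the passage from $-\Psi \cdot \fourier h = 0$ to $\fourier h \cdot (\chi\Phi) = 0$ (which amounts to commuting the continuous multiplier $-\Psi$ with the Schwartz cutoff $\chi$ against the distribution $\fourier h$), and making rigorous the localization step that multiplication by a $W$-element equal to~$1$ on $K$ acts as the identity on $K$ at the level of tempered distributions. Definitions~\ref{def:algebra} and~\ref{def:act} are tailored precisely for these manipulations, so the outstanding task is to combine them with the theory of convolvability and multiplicability of tempered distributions assembled in the preliminary sections.
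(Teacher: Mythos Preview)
Your proposal is correct and follows essentially the same route as the paper's proof. The only cosmetic difference is that the paper carries out the localization on the space side before taking the Fourier transform---it convolves the identity $\check{L}\schwconv h=0$ with a Schwartz function $\varphi$ whose Fourier transform plays the role of your cutoff $\chi$, obtaining $h\schwconv(\check{L}*\varphi)=0$, and only then applies the exchange formula to get $H\schwprod\Phi=0$ with $\Phi=\fourier\varphi\cdot\Psi\in W$---whereas you first apply the exchange formula to get $(-\Psi)\schwprod\fourier h=0$ and then multiply by $\chi$; both routes rely on dual identities from the preliminaries and land at the same place.
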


We remark that if $f$ is a polynomial and $L f$ is well-defined, then $L f$ is again a polynomial, and the degree of $L f$ is less than the degree of $f$. Thus, in order to find $L$-harmonic polynomials of degree $n$ it is sufficient to evaluate $L f$ for every monomial $f$ of degree at most $n$, and solve a system of linear equations. We also remark that if $L$ is isotropic (invariant under rotations), then $L$-harmonic polynomials $h$ are harmonic in the classical sense, that is, they satisfy $\Delta h = 0$.

The proof of Theorem~\ref{thm:liouville:0} is similar to the argument applied independently in~\cite{bss}. By specifying the Wiener-type algebra $W$, we obtain the following family of less abstract results. More variants of Liouville's theorem which follow from Theorem~\ref{thm:liouville:0} can be found in Section~\ref{sec:signed}.

\begin{corollary}
\label{cor:liouville:0}
Let $L$ be a Lévy operator which is not concentrated on a proper closed subgroup of $\R^d$, and let $h$ be an $L$-harmonic function in the sense of tempered distributions. Then $h$ is necessarily a polynomial if any of the following conditions holds:
\begin{enumerate}[label=(\alph*)]
\item\label{cor:liouville:a} the Fourier symbol $-\Psi$ of $L$ is smooth on $\R^d \setminus \{0\}$ (Corollary~\ref{cor:liouville:smooth});
\item\label{cor:liouville:b} $h$ is a bounded function (Corollary~\ref{cor:liouville:bounded});
\item\label{cor:liouville:c} for some $\alpha \ge 0$, the function $|x|^\alpha$ is integrable with respect to the Lévy measure $\nu$ on $\R^d \setminus B$, while $(1 + |x|)^{-\alpha} h(x)$ is a bounded function (Example~\ref{ex:liouville:oper:power});
\item\label{cor:liouville:d} for some $\beta \ge 0$, the function $(\log |x|)^\beta$ is integrable with respect to the Lévy measure $\nu$ on $\R^d \setminus B$, while $(\log(e + |x|))^{-\beta} h(x)$ is a bounded function (Example~\ref{ex:liouville:oper:log});
\item\label{cor:liouville:e} for some $\alpha > 0$, there is a constant $c$ such that the Lévy measure of the ball with centre $x$ and radius $1$ is bounded by $c |x|^{-d - \alpha}$ for every $x \in \R^d$ such that $|x| \ge 2$, while $(1 + |x|)^{-d - \alpha} h(x)$ is an integrable function (Example~\ref{ex:liouville:funct:power});
\item\label{cor:liouville:f} for some $\alpha > 0$ and $\beta \in \R$, or for $\alpha = 0$ and some $\beta > 1$, there is a constant $c$ such that the Lévy measure of the ball with centre $x$ and radius $1$ is bounded by $c |x|^{-d - \alpha} (\log |x|)^{-\beta}$ for every $x \in \R^d$ such that $|x| \ge 2$, while $(1 + |x|)^{-d - \alpha} (\log(e + |x|))^{-\beta} h(x)$ is an integrable function (Example~\ref{ex:liouville:funct:power:log}).
\end{enumerate}
\end{corollary}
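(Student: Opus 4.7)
The plan is to reduce each of the six cases to Theorem~\ref{thm:liouville:0} by exhibiting a Wiener-type algebra $W$ (in the sense of Definition~\ref{def:algebra}) such that $-\Psi$ lies in $W$ locally on $\R^d \setminus \{0\}$ and $\fourier h$ acts on $W$ in the sense of Definition~\ref{def:act}. In each case the algebra is dictated by the paired regularity of $\Psi$ and growth of $h$ appearing in the hypothesis.

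Case~\ref{cor:liouville:a} uses $W = \schw$: smoothness of $-\Psi$ on $\R^d \setminus \{0\}$ means that on every compact $K \subseteq \R^d \setminus \{0\}$ the distribution $-\Psi$ agrees with a smooth compactly supported cutoff of itself, which is Schwartz; and any tempered distribution acts on $\schw$ because multiplication of a tempered distribution by a Schwartz function is associative. Case~\ref{cor:liouville:b} uses the classical Wiener algebra $W = \fourier L^1(\R^d)$: writing $\Phi = \fourier u$ and, with a minor reuse of symbol, $\Xi = \fourier v$ for $u, v \in L^1$, the associativity required by Definition~\ref{def:act} translates through the Fourier exchange formula to the associativity of convolution $(h * u) * v = h * (u * v)$, which is valid for $h \in L^\infty$. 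Cases~\ref{cor:liouville:c}--\ref{cor:liouville:d} require algebras matched to moment conditions on $\nu$: integrability of $|y|^\alpha$ or $(\log|y|)^\beta$ against $\nu$ at infinity yields corresponding H\"older- or Sobolev-type regularity of $\Psi$, while the matching weighted boundedness of $h$ provides the dual pairing needed for the action property. Cases~\ref{cor:liouville:e}--\ref{cor:liouville:f} instead exploit pointwise decay of $\nu$ at infinity, which forces the oscillatory part of $\Psi$ to be locally in $L^1$ after Fourier inversion, while the matching weighted integrability of $h$ makes $\fourier h$ a continuous function of adequate decay so that the associativity in Definition~\ref{def:act} reduces to Fubini.

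The principal obstacle is verifying condition~\ref{it:al:3} of Definition~\ref{def:algebra}---a Wiener-type $1/f$ property---for the refined algebras used in cases~\ref{cor:liouville:c}--\ref{cor:liouville:f}. For $\schw$ this is immediate and for $\fourier L^1(\R^d)$ it is Wiener's classical theorem, but for the weighted and decay-adapted variants a tailored inversion argument is required, typically via Gelfand's theory in an appropriate commutative Banach algebra whose character space can be identified with $\R^d$. A secondary but nontrivial task is confirming that $-\Psi$ genuinely lies in the chosen algebra locally on $\R^d \setminus \{0\}$; in each case this hinges on splitting $\Psi$ via the L\'evy--Khintchine formula~\eqref{eq:lk} into its polynomial and integral parts and showing that the regularity of the latter matches precisely the assumption imposed on $\nu$.
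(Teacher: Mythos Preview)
Your overall architecture---reduce each item to Theorem~\ref{thm:liouville:0} via a suitable Wiener-type algebra---is exactly what the paper does, and your treatment of cases~\ref{cor:liouville:a} and~\ref{cor:liouville:b} matches Corollaries~\ref{cor:liouville:smooth} and~\ref{cor:liouville:bounded}.

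For cases~\ref{cor:liouville:c}--\ref{cor:liouville:d} your description is vague and somewhat misleading: the paper does not use H\"older or Sobolev regularity of $\Psi$, but rather the Beurling-type algebra $W_\profile^\infty = \fourier\{f : (1+\profile)f \in L^1\}$ for a submultiplicative weight $\profile$ (Definition~\ref{def:oper}). Your proposed Gelfand-theory route to the $1/f$ property is a legitimate alternative here, since these are classical Beurling algebras whose maximal ideal space is $\R^d$ under the polynomial-growth condition~\ref{it:oper:2}; the paper instead gives a self-contained Newman-style geometric-series argument (Lemma~\ref{lem:oper:2}). Showing that $\Psi$ lies in $W_\profile^\infty$ locally does reduce, as you say, to the moment condition on $\nu$.

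For cases~\ref{cor:liouville:e}--\ref{cor:liouville:f} there is a genuine gap. You write that weighted integrability of $h$ makes $\fourier h$ ``a continuous function of adequate decay,'' but this is false: the hypothesis is that $\profile h \in L^1$ with $\profile(x) \approx (1+|x|)^{-d-\alpha}$, which permits $h$ to grow polynomially, so $\fourier h$ is a genuine distribution and the associativity in Definition~\ref{def:act} cannot be reduced to Fubini on the Fourier side. The paper's algebra here is $W_\profile^1 = \fourier\{f : f/\profile \in L^\infty\}$ for a profile satisfying the \emph{subconvolutive} condition $\profile * \profile \le \profile$ (Definition~\ref{def:funct}); the action of $\fourier h$ on $W_\profile^1$ is verified on the spatial side via $|h| * |f| * |g| \le \|f/\profile\|_\infty \|g/\profile\|_\infty\, |h| * \profile$, which is finite precisely because $\check{\profile} h \in L^1$. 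The $1/f$ property for $W_\profile^1$ (Lemma~\ref{lem:funct:2}) again follows a Newman-style expansion, but now needs the quantitative tail condition in Definition~\ref{def:funct}\ref{it:funct:1} to control the iterated convolutions, and it is not obvious that Gelfand theory applies cleanly to this less standard algebra.
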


As discussed after the statement of Theorem~\ref{thm:positive:liouville:0}, Liouville's theorem for bounded $L$-harmonic functions has been studied previously, and the general result stated in Corollary~\ref{cor:liouville:0}\ref{cor:liouville:a} is due to Alibaud, del~Teso, Endal and Jakobsen~\cite{adej} (see Theorems~1.1 and~1.2 therein). An independent proof was given by Berger and Schilling in~\cite{bs} (see Theorem~4.4 therein).

The variant for smooth symbols, given in Corollary~\ref{cor:liouville:0}\ref{cor:liouville:b}, follows easily from standard properties of tempered distributions and their Fourier transforms. Essentially the same statement is given by Berger and Schilling in~\cite{bs} (see Theorem~3.2 therein).

Liouville's theorem for $L$-harmonic functions under a moment condition on the Lévy measure similar to Corollary~\ref{cor:liouville:0}\ref{cor:liouville:c} was proved by Ros-Oton and Serra in~\cite{rs} (see Theorem~2.1 therein) for homogeneous Lévy-type operators (that is, generators of stable Lévy processes). General Lévy operators were considered by Kühn in~\cite{kuhn} (see Theorem~1 therein), and the present statement of Corollary~\ref{cor:liouville:0}\ref{cor:liouville:c} was independently found by Berger, Schilling and Shargorodsky in~\cite{bss} (see Theorem~8 therein). The same work contains a more general result (see Theorem~11 therein), equivalent to our Corollary~\ref{cor:liouville:oper}, which encompasses Corollary~\ref{cor:liouville:0}\ref{cor:liouville:c} and~\ref{cor:liouville:d}.

Corollary~\ref{cor:liouville:0}\ref{cor:liouville:e} for the fractional Laplace operator was proved Chen, D'Ambrosio and Li in~\cite{cdl} (see Theorem~1.3 therein) and by Fall in~\cite{fall} (see Theorem~1.1 therein). Extension similar to Corollary~\ref{cor:liouville:0}\ref{cor:liouville:e} was given by Fall and Weth in~\cite{fw} (see Theorem~1.4 therein). Corollary~\ref{cor:liouville:0}\ref{cor:liouville:f}, as well as more general Corollary~\ref{cor:liouville:funct}, seem to be new. In~\cite{dsv} (see Theorem~1.5 therein), Dipierro, Savin and Valdinoci proved a variant of Liouville's theorem for the fractional Laplace operator, which covers a wider class of harmonic functions; see Remark~\ref{rem:lizorkin} for further discussion.

We remark that if $\alpha > 0$ and the Lévy measure $\nu$ of the Lévy operator $L$ is comparable with $|y|^{-d - \alpha} dy$ when $|y|$ is large enough, then Corollary~\ref{cor:liouville:0}\ref{cor:liouville:e} provides a complete Liouville's theorem, with no restrictions on the $L$-harmonic function~$h$. Indeed: in this case $(1 + |x|)^{-d - \alpha} h(x)$ is automatically integrable whenever $h$ is $L$-harmonic, and additionally the two notions of $L$-harmonicity (Definitions~\ref{def:harmonic} and~\ref{def:harmonic:schw}) are equivalent. The same remark applies to Lévy operators with Lévy measure comparable with $|y|^{-d - \alpha} (\log |y|)^{-\beta} dy$ when $|y|$ is large enough, where $\alpha$ and $\beta$ are as in Corollary~\ref{cor:liouville:0}\ref{cor:liouville:f}.


\subsection{A counterexample}
\label{sec:intro:counter}

Although rather general, Theorem~\ref{thm:liouville:0} does not apply to arbitrary $L$-harmonic functions (in the sense of tempered distributions), and there is a reason for that: it turns out that it is not possible to make step~\ref{it:step:c} of our proof of Theorem~\ref{thm:liouville:0} rigorous without additional assumptions. In other words, even though $\Psi = -\fourier \check{L}$ is a positive, continuous function in $\R^d \setminus \{0\}$, there may exist a tempered distribution $\fourier h$ such that the multiplicative product of $\Psi \cdot \fourier h$ is well-defined in the sense of tempered distribution and $\Psi \cdot \fourier h = 0$, but nevertheless $\fourier h$ is nonzero in $\R^d \setminus \{0\}$. That is, in general, division by $\Psi$ turns out to be impossible.

A rigorous statement is given in the following theorem, which is our third main result.

\begin{theorem}
\label{thm:counter:0}
\begin{enumerate}[label=(\alph*)]
\item There is a bounded, positive, continuous function $f$ and a tempered distribution $g$ such that the $\schw'$-product $f \cdot g$ is well-defined and equal to zero even though $g$ is not identically zero (see Corollary~\ref{cor:counter:prod}).
\item There is a nontrivial probability measure $\mu$ on $\Z$ and a function $h$ on $\Z$ such that $\lim_{|x| \to \infty} (|x|^{-\eps} h(x)) = 0$ for every $\eps > 0$ and $h * \mu = \mu$, but $h$ is not constant, and hence not a polynomial (see Theorem~\ref{thm:counter:discrete}).
\item There is a Lévy operator $L$ and a smooth function $h$ such that $L$ is not concentrated on a proper closed subgroup of $\R^d$, for every $\eps > 0$ we have $\lim_{|x| \to \infty} (|x|^{-\eps} h(x)) = 0$, and $L h = 0$ (both pointwise and in the weak sense), but $h$ is not constant, and hence not a polynomial (see Theorem~\ref{thm:counter}).
\end{enumerate}
\end{theorem}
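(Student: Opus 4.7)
The three items are three views of a single counterexample: (b) on the lattice $\Z$, (c) lifted to a Lévy operator on $\R^d$ by a compound Poisson construction, and (a) as the Fourier-transformed multiplier statement underlying (c). I would prove them in the order (b), (c), (a).

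For (b), the main obstacle is Choquet--Deny: for any probability measure $\mu$ on $\Z$ whose support generates $\Z$, every bounded $\mu$-harmonic function is constant. So the counterexample $h$ must be unbounded --- yet slower than any power. My plan is to pick a rapidly increasing integer sequence $(N_k)$ and weights $(\alpha_k)$ with $\sum \alpha_k = 1$, let $\mu = \sum_k \alpha_k \tfrac{1}{2}(\delta_{N_k} + \delta_{-N_k})$ plus a small admixture forcing $\supp\mu$ to generate $\Z$ (not just the subgroup generated by the $N_k$), and build $h = \sum_k \beta_k \psi_k$ as a distributional series in which $\psi_k$ is approximately $\mu$-harmonic on the $k$th scale and $\beta_k$ decays super-polynomially to control $|h(n)|$. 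The parameters $(N_k, \alpha_k, \beta_k)$ are tuned so that the residuals of $\psi_k$ under $\mu$-convolution telescope against contributions from other scales. Fourier-dually, $\hat\mu - 1$ has near-zeros accumulating at $0$, and $\fourier h$ is engineered to be a tempered distribution on $\R/2\pi\Z$ whose singularities sit on those near-zeros, in such a way that the $\schw'$-product $(\hat\mu - 1) \cdot \fourier h$ vanishes although $\fourier h$ itself is not supported in $\{0\}$.

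For (c), I take the compound Poisson Lévy operator with Lévy measure $\nu = \mu$, so that $L f(x) = (f * \mu - f)(x)$; the function $h$ from (b), viewed on $\Z \subset \R$, satisfies $L h = 0$, and mollifying by a Schwartz bump produces a smooth $h \in C^\infty(\R)$ with the same sub-polynomial bound, by translation invariance of $L$. To make $L$ not concentrated on a proper closed subgroup, I add to $\nu$ a small absolutely continuous perturbation supported near $0$ whose density generates $\R$ together with $\supp\mu$, and compensate by adjusting $h$ along the kernel of $L$. Part (a) then follows by Fourier transform: take $g = \fourier h$, a nonzero tempered distribution not supported in $\{0\}$ (since $h$ is non-polynomial), and $f$ a bounded positive continuous modification of the Fourier symbol $\Psi$ of $L$ (noting that for a compound Poisson $L$ with probability Lévy measure the symbol $1 - \hat\mu$ is already bounded); the identity $f \cdot g = \Psi \cdot \fourier h = \fourier(L h) = 0$ holds in $\schw'$.

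The hard part is the construction in (b); once it is available, the transfers to (c) and (a) are standard manipulations. The difficulty reflects exactly the obstruction to step~\ref{it:step:c} in the proof sketch of Theorem~\ref{thm:liouville:0}: one has to produce a genuinely singular $\fourier h$ whose $\schw'$-product with $\hat\mu - 1$ vanishes even though $\hat\mu - 1$ has no zeros outside $\{0\}$. I expect this to require a delicate iterative construction, balancing the rate at which $\hat\mu - 1$ approaches zero at its near-zeros against the order of the singularities of $\fourier h$ on those same points, with the scales $N_k$ chosen sparse enough (for instance, superexponentially growing) that the distributional product is controlled at each step.
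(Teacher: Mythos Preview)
Your overall strategy---build the lattice example first, then lift to $\R$, then read off the distributional product statement---is exactly what the paper does, and your spatial-side picture for (b) (sparse symmetric $\mu$ on a superexponentially growing sequence, iterative correction of $h$) is close to the paper's construction. But your Fourier-side intuition is off, and it would steer you wrong: the mechanism is \emph{not} that $\hat\mu-1$ has ``near-zeros accumulating at $0$'' with $\fourier h$ carrying singularities on them. With $x_k=2^{2k^2}$ and $p_k=2^{-k-2}$ the symbol $\Psi(\xi)=\sum 2p_k(1-\cos(x_k\xi))$ is a Weierstrass-type nowhere-differentiable function, strictly positive off $2\pi\Z$; the point is that its irregularity alone defeats division in $\schw'$. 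Relatedly, the coefficients in your decomposition of $h$ do not decay super-polynomially---they \emph{grow} (roughly like $p_m^{-1}$), and the sub-power bound on $h$ comes from the support being pushed out to $m+x_m$ with $x_m$ enormous.

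Your transfer to (c) has a genuine gap. If you take the compound Poisson operator with Lévy measure $\mu$ on $\Z$ and then ``add a small absolutely continuous perturbation'' to break the lattice, the mollified $h$ is no longer harmonic for the perturbed operator, and ``compensating by adjusting $h$ along the kernel of $L$'' is circular: finding something in that kernel is the problem. The paper avoids this by taking $L f=f''+\sum p_k(f(\cdot+x_k)+f(\cdot-x_k)-2f)$; the Laplacian term forces $\Psi(\xi)\ge\xi^2>0$ for $\xi\ne0$, so $L$ is not concentrated on a proper subgroup, and then the \emph{entire} construction of $h$ is redone on $\R$, with each discrete value $a_m$ replaced by a smooth bump $\varphi_m$ supported in $(-\tfrac12,\tfrac12)$ and $x_m$ chosen on the fly large enough to control $|\varphi_m|+|\varphi_m''|$. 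For (a) you also need to say how to make $f$ strictly positive: $\Psi$ vanishes on $2\pi\Z$, so the paper sets $g=\psi\cdot\fourier H$ with $\psi\in\schw$ vanishing on $2\pi\Z$ but with $\psi\cdot\fourier H\ne0$, and $f=\Psi+\varphi$ with $\varphi\in\schw$ nonnegative, positive on $2\pi\Z$, and $\varphi\psi=0$; then $f\schwprod g=\psi\cdot(\Psi\schwprod\fourier H)+(\varphi\psi)\cdot\fourier H=0$.
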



\subsection{Structure of the paper}
\label{sec:intro:structure}

The remaining part of the article is divided into four sections.

In Preliminaries we introduce the notation used in the remaining part of the paper (Section~\ref{sec:pre:general}), we discuss the notions of convolution and multiplication of distributions (Section~\ref{sec:pre:distr}), and connect them with Lévy operators (Section~\ref{sec:pre:levy}).

Theorem~\ref{thm:positive:liouville:0}, describing positive $L$-harmonic functions, is proved in Section~\ref{sec:positive}.

In Section~\ref{sec:counter}, we construct counterexamples to Liouville's theorem which prove Theorem~\ref{thm:counter:0}. We begin with operators on $\Z$ (Section~\ref{sec:counter:discrete}), then we reinterpret this example in terms of multiplication of distributions (Section~\ref{sec:counter:product}), and finally we deal with the case of operators on $\R^d$ (Section~\ref{sec:counter:continuous}).

The final part of the paper contains the proof of Theorem~\ref{thm:liouville:0} and Corollary~\ref{cor:liouville:0}, which provides various variants of Liouville's theorem for signed $L$-harmonic functions. We begin with the abstract result given in Theorem~\ref{thm:liouville:0} (Section~\ref{sec:signed:general}). Then, by choosing appropriate Wiener-type algebras, we show how this result leads to Liouville's theorems for operators with smooth symbols (Section~\ref{sec:signed:smooth}) and for bounded $L$-harmonic functions (Section~\ref{sec:signed:bounded}), which correspond to Corollary~\ref{cor:liouville:0}\ref{cor:liouville:a} and~\ref{cor:liouville:b}, respectively. In order to prove Corollary~\ref{cor:liouville:0}\ref{cor:liouville:c} and~\ref{cor:liouville:d}, we construct another Wiener-type algebra in Proposition~\ref{prop:oper} (Section~\ref{sec:signed:oper}). Similarly, Corollary~\ref{cor:liouville:0}\ref{cor:liouville:e} and~\ref{cor:liouville:f} is a consequence of Proposition~\ref{prop:funct} and auxiliary Lemma~\ref{lem:liouville:funct:radial} (Section~\ref{sec:signed:funct}).

\subsection*{Acknowledgements}

We thank Moritz Kaßmann for stimulating discussions about Liouville theorems, and the anonymous referee for helpful suggestions.

%
%

\section{Preliminaries}
\label{sec:pre}


\subsection{General notation}
\label{sec:pre:general}

We use $x, y, z \in \R^d$ for spatial variables, and $\xi, \eta \in \R^d$ for Fourier variables. If $\xi, x \in \R^d$, by $\xi x$ we denote the dot product of $\xi$ and $x$. We use the symbol $\delta_x(dy)$ for the Dirac delta measure at $x$.

We write $\test$ for the class of smooth, compactly supported functions on $\R^d$, and $\test'$ for the dual space, the class of \emph{Schwartz distributions}. As it is customary, if $f \in \test'$ and $\ph \in \test$, we write $\langle f, \ph \rangle$ for the value of $f$ at $\ph$.

We denote by $\schw$ the \emph{Schwartz class} of rapidly decreasing smooth functions on $\R^d$, and by $\schw'$ the dual space, the class of \emph{tempered distributions}. Again, if $f \in \schw'$ and $\ph \in \schw$, we write $\langle f, \ph \rangle$ for the value of $f$ at $\ph$. Note that $\test \subset \schw$ and $\schw' \subset \test'$.

The \emph{support} of a distribution $f$ is the smallest closed set $K$ such that $\langle f, \ph\rangle = 0$ whenever $\ph \in \test$ and $\ph(x) = 0$ for every $x$ in some neighbourhood of $K$.

We say that a distribution $f$ corresponds to a (locally integrable) function $\tilde{f}$ in an open set $U$ if $\langle f, \ph\rangle = \int_{\R^d} \tilde{f}(x) \ph(x) dx$ whenever $\ph \in \test$ has a compact support contained in $U$. In this case we do not distinguish between $f$ and $\tilde{f}$, and we use a single symbol for both the distribution $f$ and the function $\tilde{f}$. In a similar way, we identify (locally finite) measures with the corresponding distributions.

The \emph{Fourier transform} of $\ph \in \schw$ is defined by $\fourier \ph(\xi) = \int_{\R^d} e^{-i \xi x} f(x) dx$. The \emph{distributional Fourier transform} of a tempered distribution $f$ is a tempered distribution $\fourier f$, defined by the exchange formula $\langle \fourier f, \ph \rangle = \langle f, \fourier \ph \rangle$ for every $\ph \in \schw$. The inverse Fourier transform $\fourier^{-1}$ is defined in a similar way, with kernel $(2 \pi)^{-d} e^{i \xi x}$ rather than $e^{-i \xi x}$.

The \emph{spectrum} of a tempered distribution is the support of its distributional Fourier transform.

A (tempered) distribution $f$ is \emph{bounded} if the convolution $f * \ph$ (defined later in this section) is bounded for every $\ph \in \schw$. A bounded distribution extends to a continuous functional on the class of smooth functions with all derivatives integrable.

In a similar way, a (tempered) distribution $f$ is \emph{integrable} if $f * \ph$ is integrable for every $\ph \in \schw$. An integrable distribution extends to a continuous functional on the class of smooth functions with all derivatives bounded.

The Fourier transform of an integrable distribution $f$ coincides with a continuous function, defined by $\fourier f(\xi) = \langle f, e_\xi \rangle$, where $e_\xi(x) = e^{-i \xi x}$.

These and many other properties of Schwartz distributions and tempered distributions can be found in~\cite{vladimirov}.


\subsection{Crash course in convolution and multiplication of distributions}
\label{sec:pre:distr}

The convolution $f * \ph$ of $f \in \test'$ and $\ph \in \test$ is a smooth function defined by $f * \ph(x) = \langle f, \ph_x \rangle$, where $\ph_x(y) = \ph(x - y)$. The convolution of two Schwartz distributions $f$ and $g$ exists if there is a Schwartz distribution $f \conv g$ such that
\formula[eq:conv]{
 (f \conv g) * (\ph * \psi) & = (f * \ph) * (g * \psi)
}
for every $\ph, \psi \in \test$; in particular, we assume that the convolution of functions $f * \ph$ and $g * \psi$ in the right-hand side of~\eqref{eq:conv} exists in the usual sense. It is straightforward to check that if $\ph \in \test$, then $(f \conv g) * \ph = f \conv (g * \ph) = (f * \ph) \conv g$. Similarly, if $f, g, h \in \test'$, $f \conv g$ is well-defined and $h$ is compactly supported, then $(f \conv g) \conv h = f \conv (g \conv h)$ (and in particular all convolutions are well-defined). However, convolution of Schwartz distributions is not associative in general. For further information, we refer to Section~1 in~\cite{dv}.

In a similar way, the convolution $f * \ph$ of $f \in \schw'$ and $\ph \in \schw$ is a smooth function determined by $f * \ph(x) = \langle f, \ph_x \rangle$, where again $\ph_x(y) = \ph(x - y)$. The convolution of two tempered distributions $f$ and $g$ exists if there is a tempered distribution $f \schwconv g$ such that
\formula[eq:schwconv]{
 (f \schwconv g) * (\ph * \psi) & = (f * \ph) * (g * \psi)
}
for every $\ph, \psi \in \schw$; here, too, we assume that the middle convolution in the right-hand side of~\eqref{eq:schwconv} exists in the usual sense. The convolution $f \schwconv g$ is well-defined if, for example, $f, g \in \schw'$ and $f$ has compact support. It is again a simple exercise to check that if $\ph \in \schw$, then $(f \schwconv g) * \ph = f \schwconv (g * \ph) = (f * \ph) \schwconv g$, but the convolution of tempered distributions is not associative in general. We refer to Section~2 in~\cite{dv} for further details.

The two notions of the convolution of distributions: $\test'$-convolution $f \conv g$ and $\schw'$-convolution $f \schwconv g$, are clearly closely related to each other. However, we stress that there are tempered distributions $f, g \in \schw'$ such that $f \conv g$ is defined, while $f \schwconv g$ is not; see Section~3 in~\cite{dv} for a detailed discussion.

The $\schw'$-convolution of a bounded distribution $f$ and an integrable distribution $g$ is a bounded distribution, and the $\schw'$-convolution of two integrable distributions is again an integrable distribution. In fact, $\schw'$-convolution of any number of integrable distributions and a bounded distribution is commutative and associative. Obviously, every Schwartz class function corresponds to a distribution which is integrable and bounded. We refer to~\cite{dv} for further discussion.

The product $\psi \cdot f$ of $f \in \schw'$ and $\psi \in \schw$ is a tempered distribution defined by $\langle \psi \cdot f, \ph \rangle = \langle f, \ph \psi \rangle$ for every $\ph \in \schw$. The product of two tempered distributions $f$ and $g$ exists if there is a tempered distribution $f \schwprod g$ such that
\formula{
 \langle f \schwprod g, \ph \rangle & = \lim_{n \to \infty} \int_{\R^d} f * \alpha_n(x) g * \beta_n(x) \ph(x) dx
}
for every $\ph \in \schw$ and every sequences $\alpha_n$ and $\beta_n$ of nonnegative functions in $\test$ such that the integrals of $\alpha_n$ and $\beta_n$ are equal to $1$ and the supports of $\alpha_n$ and $\beta_n$ shrink to $\{0\}$ as $n \to \infty$. Multiplication of distributions extends multiplication of continuous functions, or a continuous function and a locally finite measure. Furthermore, multiplication of distributions is a local operation: if $f_1 = f_2$ and $g_1 = g_2$ in an open set $U$, then $f_1 \schwprod g_1 = f_2 \schwprod g_2$ in $U$ (provided that both products exist). Finally, if $\psi \in \schw$, then $\psi \cdot (f \schwprod g) = (\psi \cdot f) \schwprod g = f \schwprod (\psi \cdot g)$, but multiplication of tempered distributions is not associative in general. For a detailed discussion, we refer to~\cite{si}.

For every $f \in \schw'$ and $\ph \in \schw$, we have the exchange formula $\fourier(f * \ph) = (\fourier \ph) \cdot (\fourier f)$. As it was proved in~\cite{ho}, this result extends to the general case: if the convolution of tempered distributions $f$ and $g$ exists, then the product of their Fourier transforms is well-defined, and we have
\formula{
 \fourier(f \schwconv g) & = (\fourier f) \schwprod (\fourier g) .
}


\subsection{Lévy operators and distributions}
\label{sec:pre:levy}

Let $L$ be a Lévy operator, and let $\dual{L}$ be the dual operator, as in Definition~\ref{def:harmonic}. The map which assigns to $\ph \in \schw$ the value $\dual{L} \ph(0)$ is easily checked to be continuous, and hence it corresponds to some distribution, that we denote by $\check{L}$: $\dual{L} \ph(0) = \langle \check{L}, \ph \rangle$. Furthermore, if $\ph_x(y) = \ph(x - y)$, then $L \ph(x) = \dual{L} \ph_x(0) = \langle \check{L}, \ph_x \rangle$, that is, $L \ph(x) = \check{L} * \ph(x)$. Thus, $L$ is a convolution operator, with convolution kernel $\check{L} \in \schw'$. It is straightforward to see that $L \ph$ is integrable for every $\ph \in \schw$, and therefore $\check{L}$ is in fact an integrable distribution.

By a classical result in the theory of Lévy processes, the Fourier transform of $\check{L}$ is the characteristic exponent $\Psi$ defined in~\eqref{eq:lk}. In fact, this will serve us as the definition of the continuous function $\Psi$, and we will never need~\eqref{eq:lk}.

In Section~\ref{sec:positive} we work with a general, distributional definition of an $L$-harmonic function. Here we prove that this definition indeed extends Definition~\ref{def:harmonic}.

\begin{lemma}
\label{lem:harmonic}
Let $L$ be a Lévy operator. If a function $h$ is $L$-harmonic in the weak sense (according to Definition~\ref{def:harmonic}), then $h$ is $L$-harmonic in the sense of Schwartz distributions: $h$ corresponds to a Schwartz distribution which is convolvable with $\check{L}$, the convolution kernel of $L$, and we have $\check{L} \conv h = 0$.
\end{lemma}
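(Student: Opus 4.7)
The plan is to verify the definition of $\test'$-convolvability from Section~\ref{sec:pre:distr} directly: to exhibit a distribution $T \in \test'$---my claim being $T = 0$---such that $T*(\ph*\psi) = (\check{L}*\ph)*(h*\psi) = (L\ph)*(h*\psi)$ in the usual sense of convolution of functions, for every $\ph, \psi \in \test$. Since $h$ is locally integrable, it automatically defines an element of $\test'$, so nothing has to be checked there. The key technical tool is a reflection identity: for $\theta \in \test$ and $x \in \R^d$, the translated reflection $\chi_x(z) := \theta(x-z)$ again belongs to $\test$, and a direct computation from \eqref{eq:levy:operator} together with the definition of $\dual{L}$ (conjugation of $L$ with $y \mapsto -y$) yields $\dual{L}\chi_x(z) = L\theta(x-z)$. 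Applying Definition~\ref{def:harmonic} with test function $\chi_x$ therefore gives both
\formula{
 \int_{\R^d} |h(z)|\,|L\theta(x-z)|\,dz < \infty \quad\text{and}\quad \int_{\R^d} h(z)\,L\theta(x-z)\,dz = 0
}
for every $x \in \R^d$.

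The task reduces to justifying the Fubini swap
\formula{
 (L\ph)*(h*\psi)(x) = \iint L\ph(x-y)\,h(z)\,\psi(y-z)\,dz\,dy = \int h(z)\,L(\ph*\psi)(x-z)\,dz,
}
whose right-hand side vanishes by the above with $\theta = \ph*\psi$. Fubini requires $\int |h(z)|\,(|L\ph|*|\psi|)(x-z)\,dz < \infty$. I would pick nonnegative $\tilde\ph, \tilde\psi \in \test$ majorizing $|\ph|, |\psi|$ pointwise. For $x$ outside $\supp\tilde\ph$, the differential part of $L$ applied to $\tilde\ph$ or $\ph$ vanishes, the jump integral reduces to $\int \tilde\ph(x+y)\,\nu(dy) \ge 0$ in the first case and is controlled in absolute value by the same expression in the second, giving $|L\ph(x)| \le L\tilde\ph(x)$. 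Since both $|L\ph|$ and $L\tilde\ph$ are bounded on compact sets, this upgrades to $|L\ph| \le L\tilde\ph + C\,\ind_{B(0,R)}$ globally. Convolving with $\tilde\psi \ge |\psi|$ and using $L\tilde\ph * \tilde\psi = L(\tilde\ph*\tilde\psi) = L\Theta$ with $\Theta := \tilde\ph*\tilde\psi \in \test$ produces $|L\ph|*|\psi| \le |L\Theta| + C'\,\ind_K$ for some compact set $K$ (note that $L\Theta \ge 0$ on the complement of a compact set, so the absolute value is harmless).

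Integrating this majorant against $|h|$, the $|L\Theta|$-term is finite for every $x$ thanks to the reflection identity applied to $\Theta \in \test$ and the hypothesis, while the compactly supported correction is integrated by local integrability of $h$. Fubini thus applies, the iterated integral collapses to $\int h(z)\,L(\ph*\psi)(x-z)\,dz = 0$, and we conclude $(L\ph)*(h*\psi) = 0$ as a function, for every $\ph, \psi \in \test$. Setting $T = 0 \in \test'$ verifies the definition, so $\check{L} \conv h = 0$. The main obstacle is precisely the Fubini step: the hypothesis controls only $|h|\cdot|\dual{L}\ph|$ for $\ph \in \test$, whereas Fubini demands integrability against $|L\ph|*|\psi|$, which is strictly larger than $|L(\ph*\psi)|$ because of cancellations in the convolution $L\ph*\psi$. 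The escape is the monotonicity of the nonlocal part of $L$ away from the support of its argument, which transfers positivity of a dominating test function $\tilde\ph$ into the crucial pointwise bound $|L\ph| \le L\tilde\ph + (\text{compact})$.
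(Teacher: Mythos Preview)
Your argument is correct and follows essentially the same strategy as the paper's proof: rewrite the weak hypothesis via the reflection identity as $\int h(z)\,L\theta(x-z)\,dz = 0$ with absolute convergence, then justify Fubini for $(L\ph)*(h*\psi)$ by bounding $|L\ph|*|\psi|$ by $|L\Theta|$ plus a compactly supported correction for a suitable $\Theta \in \test$. The only tactical difference is in the choice of majorant: the paper picks $\tilde\ph$ equal to $1$ on a large ball and routes the estimate through $\nu(B_{2R}(\cdot))$, whereas you take $\tilde\ph \ge |\ph|$ nonnegative and use the pointwise inequality $|L\ph| \le L\tilde\ph$ off $\supp\tilde\ph$ directly---a slightly cleaner packaging of the same idea.
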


\begin{proof}
By Definition~\ref{def:harmonic}, $h$ is a locally integrable function such that for every $\ph \in \test$ we have
\formula{
 \int_{\R^d} h(y) \dual{L} \ph(y) dy & = 0 .
}
Recall that $\langle \check{L}, \ph \rangle = \dual{L} \ph(0)$. Thus, if $\check{\ph}(x) = \ph(-x)$, then $\dual{L} \ph(x) = \check{L} * \check{\ph}(-x)$. It follows that
\formula{
 \int_{\R^d} h(y) \check{L} * \check{\ph}(-y) dy & = 0 ,
}
and in particular the integral is absolutely convergent. Replacing $\ph$ with $\ph_x(y) = \ph(x - y)$, we find that
\formula[eq:harmonic:aux]{
 \int_{\R^d} h(y) \check{L} * \ph(x - y) dx & = 0
}
for every $\ph \in \test$ and $x \in \R^d$. We will momentarily show that we can use Fubini's theorem to find that for every $\ph, \psi \in \test$ and $x \in \R^d$,
\formula*[eq:harmonic:double]{
 (h * \psi) * (\check{L} * \ph)(x) & = \int_{\R^d} h * \psi(x - y) \check{L} * \ph(y) dy \\
 & = \int_{\R^d} \biggl(\int_{\R^d} h(x - y - z) \psi(z) dz \biggr) \check{L} * \ph(y) dy \\
 & = \int_{\R^d} \biggl(\int_{\R^d} h(x - y - z) \check{L} * \ph(y) dy \biggr) \psi(z) dz \\
 & = \int_{\R^d} \biggl(\int_{\R^d} h(y) \check{L} * \ph(x - z - y) dy \biggr) \psi(z) dz = 0 .
}
Of course, this means that $\check{L} \conv h$ is well-defined and equal to zero, as desired.

In order to show absolute integrability of the double integral above, we denote by $C$ the supremum of $|\ph| + |\psi|$, and we choose $R$ large enough, so that $\ph(x) = \psi(x) = 0$ when $|x| \ge R$. Let $B_r(x)$ denote the ball of radius $r$ centred at $x$. By~\eqref{eq:levy:operator}, we have
\formula{
 |\check{L} * \ph(x)| & \le C \nu(B_R(-x))
}
whenever $|x| > R$. If $|x - y| > 2 R$ and $|z| < R$, then $|x - y - z| > R$. Thus,
\formula{
 \int_{\R^d} |\check{L} * \ph(x - z - y) \psi(z)| dz & \le C \int_{B_R(0)} |\check{L} * \ph(x - z - y)| dz \\
 & \le C^2 \int_{B_R(0)} \nu(B_R(y + z - x)) dz \le C^2 |B_R(0)| \nu(B_{2 R}(y - x)) .
}
On the other hand, if $\tilde{\ph} \in \test$ is chosen in such a way that $0 \le \tilde{\ph}(x) \le 1$ for all $x \in \R^d$, $\tilde{\ph}(x) = 1$ when $|x| \le 2 R$ and $\tilde{\ph}(x) = 0$ when $|x| \ge 3 R$, then, again by~\eqref{eq:levy:operator}, we have
\formula{
 |\check{L} * \tilde{\ph}(x)| & \ge \nu(B_{2 R}(-x))
}
whenever $|x| > 3 R$. It follows that if $|x - y| > 3 R$, then
\formula{
 \int_{\R^d} |\check{L} * \ph(x - z - y) \psi(z)| dz & \le C^2 |B_R(0)| |\check{L} * \tilde{\ph}(x - y)| .
}
Thus,
\formula{
 & \int_{\R^d \setminus B_{3 R}(x)} \int_{\R^d} |h(y) \check{L} * \ph(x - z - y) \psi(z)| dz dy \\
 & \qquad \le C^2 |B_R(0)| \int_{\R^d \setminus B_{3 R}(x)} |h(y) \check{L} * \tilde{\ph}(x - y)| dy ,
}
and the right-hand side is finite by~\eqref{eq:harmonic:aux}. Additionally, $h$ is integrable over $B_{3 R}(x)$, $\psi$ is integrable over $\R^d$, and $\check{L} * \ph$ is bounded, and hence
\formula{
 \int_{B_{3 R}(x)} \int_{\R^d} |h(y) \check{L} * \ph(x - z - y) \psi(z)| dz dy & < \infty .
}
Absolute integrability of the double integral in~\eqref{eq:harmonic:double} follows, and the proof is complete.
\end{proof}

Finally, we prove that under a natural growth condition, $L$-harmonic functions in the weak sense coincide with $L$-harmonic functions in the sense of tempered distributions. Note that with the notation introduced above, a function $h$ is $L$-harmonic in the sense of tempered distributions (according to Definition~\ref{def:harmonic:schw}) if and only if $\check{L} \schwconv h$ is well-defined and equal to $0$.

\begin{lemma}
\label{lem:harmonic:schw}
Let $L$ be a Lévy operator, let $\nu$ be the corresponding Lévy measure, and let $B$ denote the unit ball in $\R^d$. Let $h$ be a function which is $L$-harmonic in the weak sense (according to Definition~\ref{def:harmonic}), and such that
\formula{
 & |h(x)| + \int_{\R^d \setminus B} |h(x + y)| \nu(dy) ,
}
as a function of $x \in \R^d$, is bounded by a polynomial. Then $h$ is $L$-harmonic in the sense of tempered distributions (according to Definition~\ref{def:harmonic:schw}).
\end{lemma}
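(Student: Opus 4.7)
The statement amounts to verifying that, for every $\ph, \psi \in \schw$, the integral
\[
\int_{\R^d}(h*\psi)(x)\,\dual L\ph(x)\,dx
\]
is absolutely convergent and equal to zero. My plan is to first establish absolute convergence via a decomposition of $\dual L\ph$, then extend Lemma~\ref{lem:harmonic} from $\test$ to $\schw$ by a cutoff-and-dominated-convergence argument, and finally to conclude with a Fubini manipulation and the translation invariance of $\dual L$.

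For absolute convergence, decompose $\dual L\ph = A\ph + B\ph$, where $A\ph$ gathers the differential terms, the drift, the compensated integral of $\nu^\dagger$ over the unit ball, and a scalar multiple of $\ph$, while
\[
B\ph(x) = \int_{\R^d\setminus B}\ph(x-y)\,\nu(dy).
\]
A Taylor expansion bounds $A\ph$ by a Schwartz function whose seminorms are controlled by those of $\ph$, so the $A\ph$ piece is integrable against $|h*\psi|$, which grows at most polynomially. For the $B\ph$ piece, applying Tonelli, the substitution $u = x-y$, and $|h*\psi|\le|h|*|\psi|$, one reduces the required bound to
\[
\int_{\R^d}|\ph(u)|\int_{\R^d\setminus B}|h*\psi(u+y)|\,\nu(dy)\,du \le \int_{\R^d}|\ph(u)|(|\psi|*p)(u)\,du < \infty,
\]
where $p$ is the polynomial dominating $\int_{\R^d\setminus B}|h(x+y)|\nu(dy)$ provided by the hypothesis. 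The same computation (without $\psi$) produces, for the cutoff sequence $\eta_n(\cdot) = \eta(\cdot/n)$ with $\eta \in \test$ and $\eta \equiv 1$ on $B$, an integrable dominating function $|h(y)|\bigl(C_N(1+|y|)^{-N} + \int_{|z|\ge 1}|\ph(y-z)|\nu(dz)\bigr)$ for $|h(y)\,\dual L(\eta_n\ph)(y)|$, uniformly in $n$, because the Schwartz seminorms of $\eta_n\ph$ are uniformly bounded by those of $\ph$ and because $|\eta_n\ph|\le|\ph|$.

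Since $\eta_n\ph\to\ph$ in $\schw$ and $\dual L\tilde\ph(y)=\langle\check{\dual L},\tilde\ph_y\rangle$ depends continuously on $\tilde\ph \in \schw$, dominated convergence combined with Lemma~\ref{lem:harmonic} applied to the test function $\eta_n\ph$ yields
\[
\int_{\R^d}h(y)\,\dual L\ph(y)\,dy = 0 \qquad \text{for every } \ph \in \schw.
\]
By translation invariance of $\dual L$, the same identity holds with $\ph(\cdot+z)$ in place of $\ph$ for every $z \in \R^d$. Fubini's theorem---justified by the absolute convergence established in the first step---then gives
\[
\int(h*\psi)(x)\,\dual L\ph(x)\,dx = \int \psi(z)\int h(y)\,\dual L\ph(y+z)\,dy\,dz = 0,
\]
which is the required identity. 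The main obstacle in this scheme is that $\dual L\ph$ is generally not Schwartz: the tail $B\ph$ decays no faster than $\ph$ convolved against $\nu$, which can be arbitrarily slow. This is precisely where the hypothesis on the $\nu$-weighted tail of $|h|$ enters, converting the slow decay of $\dual L\ph$ into integrable control against $|h|$ (and hence against $|h*\psi|$) and making both the dominated convergence and the Fubini manipulations legitimate.
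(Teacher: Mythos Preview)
Your argument is correct, but the paper's own proof takes a rather different and shorter route. Instead of working directly with the defining integral and approximating $\ph\in\schw$ by compactly supported cutoffs, the paper decomposes the \emph{kernel}: it writes $\check{L}=\check{L}_0+\check{L}_1$ with $\check{L}_0$ compactly supported and $\check{L}_1=(1-\ph_0)\nu$ a finite nonnegative measure. Then $\check{L}_0\schwconv h$ is well-defined because $h$ is tempered and $\check{L}_0$ has compact support, while $\check{L}_1\schwconv h$ is well-defined because the hypothesis gives $\check{L}_1*|h|$ polynomial growth; hence $\check{L}\schwconv h$ exists and equals $\check{L}\conv h$, which is zero by Lemma~\ref{lem:harmonic}.

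Both proofs rest on the same local/tail splitting and the same use of the hypothesis to control the tail, but the paper applies the split to $\check{L}$ and invokes the abstract $\schw'$-convolution calculus, whereas you apply it to $\dual L\ph$ and reprove what is needed by hand (dominated convergence plus Fubini). Your approach is more self-contained and avoids citing facts about $\schw'$-convolution of compactly supported distributions; the paper's approach is considerably shorter and cleaner once that machinery is taken for granted. One minor remark: your Fubini step needs $\int(|h|*|\psi|)(x)\,|\dual L\ph(x)|\,dx<\infty$, not merely $\int|h*\psi|\,|\dual L\ph|<\infty$; but your absolute-convergence estimate already bounds the former quantity, so nothing is missing.
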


\begin{proof}
By Lemma~\ref{lem:harmonic}, we know that $\check{L} \conv h$ is well-defined and equal to zero, that is, for every $\ph, \psi \in \test$ the convolution of $\check{L} * \ph$ and $h * \psi$ is well-defined and equal to zero. Our goal is to prove a similar result for $\ph, \psi \in \schw$.

We choose a function $\ph_0 \in \test$ which takes values in $[0, 1]$, and such that $\ph_0(x) = 1$ when $|x| < 1$. Let $\check{L}_0 = \ph_0 \check{L}$ and $\check{L}_1 = \check{L} - \check{L}_0$. Then $\check{L}_0$ is a Schwartz distribution with compact support, and $\check{L}_1 = (1 - \ph_0) \nu$ is a nonnegative finite measure.

Since the support of $\check{L}_0$ is compact and $h$ defines a tempered distribution, the convolution $\check{L}_0 \schwconv h$ is well-defined. Furthermore, $\check{L}_1$ is a finite nonnegative measure and, by assumption, $\check{L}_1 * |h|$ is bounded by a polynomial. This implies that $\check{L}_1 \schwconv h$ is well-defined, too. Thus, $\check{L} \schwconv h$ is well-defined.

Clearly, $\check{L} \schwconv h = \check{L} \conv h$. However, by Lemma~\ref{lem:harmonic}, $\check{L} \conv h = 0$, and the proof is complete.
\end{proof}

%
%

\section{Positive harmonic functions}
\label{sec:positive}

In this section we prove Theorem~\ref{thm:positive:liouville:0}: we show that nonnegative $L$-harmonic functions are essentially mixtures of $L$-harmonic exponentials. The idea of the proof is very similar to the one used in~\cite{bs}: we reduce the problem to a convolution equation studied by Deny in~\cite{deny}. However, instead of the heat kernel (the distribution of the corresponding Lévy process at a fixed time) as in~\cite{bs}, we use the harmonic measure (the distribution of the Lévy process at the first exit time). This allows us to avoid integrability problems and thus remove unnecessary restrictions on the class of $L$-harmonic functions $h$.

For simplicity, Theorem~\ref{thm:positive:liouville:0} is stated for Lévy operators on $\R^d$ which are not concentrated on a proper closed subgroup of $\R^d$. In the general case, the smallest closed subgroup $\G$ of $\R^d$ on which $L$ is concentrated is isomorphic to $\R^{d - k} \times \Z^k$ for some $k \in \{0, 1, \ldots, d\}$. In order to cover this case, here we consider Lévy operators acting on $\G = \R^d \times \Z^k$ for arbitrary $d$ and $k$, and we continue to assume that $L$ is not concentrated on a proper subgroup of $\G$. We omit the obvious extensions of the notions discussed above for $\R^d$ to this more general case.

As mentioned above, the key tool in our proof is the main result (Théorème~3) of~\cite{deny}. The original statement allows $\G$ to be an arbitrary separable locally compact abelian group. We restrict our attention to $\G = \R^d \times \Z^k$. In this case the Haar measure on $\G$ is the product of the Lebesgue measure on $\R^d$ and the counting measure on $\Z^k$, and for simplicity we call it simply the Lebesgue measure on $\G$.

\begin{theorem}[Deny's theorem]
\label{thm:deny}
Let $\nu$ be a probability measure on $\G = \R^d \times \Z^k$ such that the closed group generated by the support of $\nu$ is equal to $\G$, and let $h$ be a locally finite nonnegative measure on $\G$ such that $h * \nu = h$. Then $h$ is absolutely continuous with respect to the Lebesgue measure on $\G$, and the density function is equal to
\formula{
 h(x) & = \int_\Lambda e^{\lambda x} \mu(d\lambda)
}
for a unique nonnegative measure $\mu$ on the set $\Lambda$ of those vectors $\lambda \in \R^d \times \R^k$ for which the function $e_\lambda(x) = e^{\lambda x}$ satisfies $e_\lambda * \nu = e_\lambda$.
\end{theorem}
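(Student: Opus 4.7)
The plan is to combine Choquet's theorem with a translation-invariance argument identifying the extreme rays of the cone of $\nu$-invariant positive measures. Let $\mathcal{C}$ denote the convex cone of locally finite nonnegative measures $h$ on $\G$ satisfying $h * \nu = h$; after fixing a compact neighbourhood $K_0$ of the origin, set $\mathcal{H} = \{h \in \mathcal{C} : h(K_0) = 1\}$. The first task is to verify that $\mathcal{H}$ is a convex, vaguely compact, metrizable subset of the space of Radon measures on $\G$ (metrizability following from second countability of $\G$). Convexity is immediate; vague compactness is obtained by iterating the equation $h = h * \nu^{*n}$ and exploiting that the closed group generated by $\supp \nu$ equals $\G$, so that, for any compact $K \subseteq \G$, some power $\nu^{*n}$ puts positive mass in a suitable translate of $K_0$, yielding uniform bounds $h(K) \le C_K$ across $\mathcal{H}$. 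Choquet's theorem in its metrizable form then represents every $h \in \mathcal{H}$ as the barycenter of a probability measure $\tilde\mu$ concentrated on $\ext(\mathcal{H})$.

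The heart of the proof is to identify $\ext(\mathcal{H})$ with the exponentials $e_\lambda$, $\lambda \in \Lambda$, suitably normalized. Let $h \in \ext(\mathcal{H})$ and, for $y \in \G$, write $T_y h$ for the translated measure $A \mapsto h(A - y)$. I would first establish a Harnack-type domination $T_y h \le C(y) h$: if $y \in \supp \nu^{*n}$ and a neighbourhood of $y$ carries at least $\eps$ of the mass of $\nu^{*n}$, then $h = h * \nu^{*n}$ dominates $\eps$ times the convolution of $h$ with a point mass at $y$ smeared over that neighbourhood, hence a positive multiple of $T_y h$; the hypothesis on $\supp \nu$ propagates this bound to every $y \in \G$. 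Now $T_y h$ and $C(y) h - T_y h$ both lie in $\mathcal{C}$ and sum to $C(y) h$, so $h$ is a convex combination of two elements of $\mathcal{H}$ proportional to $T_y h$ and $C(y) h - T_y h$. Extremality forces $T_y h = c(y) h$ for a positive scalar $c(y)$. The relation $T_{y_1 + y_2} = T_{y_1} T_{y_2}$ yields $c(y_1 + y_2) = c(y_1) c(y_2)$, so $c$ is a continuous positive character of $\G$; hence $c(y) = e^{-\lambda y}$ for some $\lambda$. The identity $h(A - y) = e^{-\lambda y} h(A)$ then forces $h(dx) = C e^{\lambda x} dx$, and the invariance $h * \nu = h$ reduces to $\int_\G e^{-\lambda y} \nu(dy) = 1$, i.e., $\lambda \in \Lambda$.

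Pushing the Choquet representation forward along the parametrization $\lambda \mapsto e_\lambda$ yields the desired formula $h(x) = \int_\Lambda e^{\lambda x} \mu(d\lambda)$ and simultaneously the absolute continuity of $h$. Uniqueness of $\mu$ reduces to linear independence of $\{e_\lambda : \lambda \in \Lambda\}$ in the space of locally integrable functions: if $\int_\Lambda e^{\lambda x} \mu(d\lambda)$ vanishes identically in $x$, then testing against Schwartz functions of $x$ and inverting a Fourier--Laplace transform forces $\mu = 0$.

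The main obstacle is the Harnack-type inequality $T_y h \le C(y) h$. The convolution equation provides only averaged control, and upgrading it to a pointwise domination of measures requires a careful choice of test functions together with a quantitative form of the generation hypothesis for $\supp \nu$, including the fact that, in the Euclidean factor $\R^d$ of $\G$, sufficient convolution powers smear $\nu$ over a neighbourhood of any point in the generated subgroup. Once this technical lemma is in place, the rest of the argument is structural and follows standard Choquet-theoretic lines.
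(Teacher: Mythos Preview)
The paper does not prove this statement. Theorem~\ref{thm:deny} is quoted from Deny's paper~\cite{deny} (Th\'eor\`eme~3 there) and used as a black box in the proof of Theorem~\ref{thm:positive:liouville}; no argument is supplied. So there is no ``paper's own proof'' to compare against.

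That said, your outline is precisely the Choquet-theoretic strategy underlying Deny's original proof: realise the $\nu$-invariant positive measures as a cap of a convex cone, show the cap is compact and metrizable in the vague topology, invoke Choquet's representation theorem, and identify the extreme rays as exponentials by a translation-rigidity argument. Two technical points deserve more care than you give them. First, the vague compactness step (uniform bounds $h(K)\le C_K$ from $h(K_0)=1$) already needs a Harnack-type inequality, so it is not really independent of what you call ``the main obstacle''. Second, and more seriously, the domination $T_y h \le C(y) h$ that you extract from $h=h*\nu^{*n}$ is only an \emph{averaged} inequality: from $h(A)\ge \int_U h(A-z)\,\nu^{*n}(dz)$ you do not get $h(A-y)\le C\,h(A)$ for a fixed $y\in U$ when $h$ is a genuine measure, only a bound on the average of $z\mapsto h(A-z)$ over $U$. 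Passing to a pointwise domination of measures requires either first proving absolute continuity of $h$ (so that one can work with a density and use continuity/Lebesgue points), or the device used in the Choquet--Deny literature of replacing $h$ by $h*\varphi$ for a suitable $\varphi$. You flag this as the main obstacle, which is accurate; just be aware that ``smearing'' alone does not close the gap. Also note that the convolution equation gives the domination only for $y$ in the closed \emph{semigroup} generated by $\supp\nu$; extending $T_y h=c(y)h$ from the semigroup to the full group $\G$ is an extra (short) step.
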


Although we avoid as much as possible probabilistic tools, in this section some well-known results from the theory of Lévy processes play an important role. Namely, in the next paragraph we introduce two standard potential-theoretic objects: the Green kernel and the harmonic measure, and in the proof of our Liouville's theorem we use the probabilistic definition of the notion of $L$ not being concentrated on a proper closed subgroup of $\G$.

For every bounded open set $D \subset \G$ there are associated \emph{Green kernel} $G_D(x, dy)$ and the \emph{harmonic measure} $H_D(x, dz)$, with the following properties. If $x \in D$, then $G_D(x, dy)$ is a finite, nonnegative measure with respect to $y$, concentrated on $D$, while $H_D(x, dz)$ is a probability measure with respect to $z$, concentrated on $\G \setminus D$. If $x \notin D$, then $G_D(x, dy)$ is a zero measure, while $H_D(x, dz) = \delta_x(dz)$. Finally, for every $\ph \in \test$ and $x \in \G$ we have
\formula[eq:iw]{
 \ph(x) & = \int_{\G} \ph(z) H_D(x, dz) - \int_{\G} L \ph(y) G_D(x, dy) .
}
While the above objects can be constructed analytically, their probabilistic description is much simpler and far more intuitive. Let $X_t$ be the Lévy process with generator $L$, and let $\pr^x$ and $\ex^x$ denote the probability and expectation corresponding to the process $X_t$ started at $x \in \G$. Let $\tau_D$ denote the first exit time from $D$:
\formula{
 \tau_D & = \inf\{ t \in [0, \infty) : X_t \notin D \} .
}
Then $H_D(x, dz)$ is the distribution of $X_t$ at the first exit time $\tau_D$, and $G_D(x, dy)$ is the mean occupation measure up to time $\tau_D$:
\formula{
 H_D(x, A) & = \pr^x(X_{\tau_D} \in A) , \\
 G_D(x, A) & = \ex^x \int_0^{\tau_D} \ind_A(X_t) dt .
}
Formula~\eqref{eq:iw} is known as \emph{Dynkin's formula}, and it is one of the fundamental tools in the study of Markov processes; see Theorem~5.1 in~\cite{dynkin}.

For a bounded open set $D$ which contains the origin, we denote
\formula{
 \check{G}_D(A) & = G_D(0, -A) && \text{and} & \check{H}_D(A) & = H_D(0, -A) .
}
Recall that by $\check{L}$ we denote the convolution kernel of the Lévy operator $L$. Then~\eqref{eq:iw} implies that for every $\ph \in \test$ we have
\formula{
 \ph(0) & = \int_{\G} \ph(-z) \check{H}_D(dz) - \int_{\G} L \ph(-y) \check{G}_D(dy) \\
 & = \check{H}_D * \ph(0) - (\check{L} * \ph) * \check{G}_D(0) .
}
Since $\check{G}_D$ is a finite measure with compact support, the convolution $\check{L} \conv \check{G}_D$ is well-defined, and $(\check{L} * \ph) * \check{G}_D = (\check{L} * \ph) \conv \check{G}_D = \ph * (\check{L} \conv \check{G}_D)$. Thus,
\formula{
 \ph(0) & = \ph * \bigl(\check{H}_D - \check{L} \conv \check{G}_D\bigr)(0)
}
for every $\ph \in \test$. But this is another way to say that
\formula[eq:iw2]{
 \delta_0 & = \check{H}_D - \check{L} \conv \check{G}_D .
}
We will use the above identity in the proof of the following theorem, which is the main result of this section.

\begin{theorem}[Liouville's theorem for positive solutions]
\label{thm:positive:liouville}
Let $L$ be a Lévy operator on $\G = \R^d \times \Z^k$, which is not concentrated on a proper closed subgroup of $\G$, and let $h$ be a locally finite nonnegative measure on $\G$ which is $L$-harmonic in the weak sense, or, more generally, in the sense of Schwartz distributions (see Lemma~\ref{lem:harmonic}). Then $h$ is absolutely continuous with respect to the Lebesgue measure on $\G$, and the density function is equal to
\formula[eq:positive:liouville]{
 h(x) & = \int_\Lambda e^{\lambda x} \mu(d\lambda)
}
for a unique nonnegative measure $\mu$ on the set $\Lambda$ of those vectors $\lambda \in \R^d \times \R^k$ for which the function $e_\lambda(x) = e^{\lambda x}$ satisfies $L e_\lambda(x) = 0$.
\end{theorem}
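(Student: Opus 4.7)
The plan is to use the identity~\eqref{eq:iw2} just derived, $\delta_0 = \check{H}_D - \check{L} \conv \check{G}_D$, to reduce the problem to Deny's Theorem~\ref{thm:deny} for the probability measure $\check{H}_D$. Fix a bounded open set $D \subset \G$ containing the origin. Convolving both sides of~\eqref{eq:iw2} with $h$ and exploiting that $\check{G}_D$ is a finite measure with compact support (so the standard associativity rule applies), I would compute
\formula{
 h & = \delta_0 \conv h = \check{H}_D \conv h - \bigl(\check{L} \conv \check{G}_D\bigr) \conv h = \check{H}_D \conv h - \check{G}_D \conv \bigl(\check{L} \conv h\bigr) = \check{H}_D * h ,
}
using the hypothesis $\check{L} \conv h = 0$ in the last step. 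Thus $h$ is a locally finite nonnegative measure satisfying the convolution equation $h = \check{H}_D * h$, which is precisely of the form required by Deny's theorem.

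To apply Theorem~\ref{thm:deny}, the next task is to choose $D$ so that the closed subgroup of $\G$ generated by $\supp(\check{H}_D)$ equals $\G$. The support of $\check{H}_D$ reflects the distribution of the first exit position of the Lévy process from $D$, and in particular contains (up to sign) the support of $\nu$ restricted to $\G \setminus \overline{D}$, as well as all sums of such jumps and contributions from the drift and diffusive part. Combining this with the description of the group $\G$ generated by the Lévy triplet (reviewed after Definition~\ref{def:subgroup}) and the hypothesis that $L$ is not concentrated on a proper subgroup, one should deduce that $\supp(\check{H}_D)$ generates $\G$ for $D$ small enough; this step is expected to be the main technical obstacle and to lean on standard facts about supports of exit distributions of Lévy processes.

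Once Deny's theorem applies, it yields $h(x) = \int_{\Lambda'} e^{\lambda x} \mu(d\lambda)$, where $\Lambda' = \{\lambda : e_\lambda * \check{H}_D = e_\lambda\}$. Since $e_\lambda(x - z) = e_\lambda(x) e_{-\lambda}(z)$, membership in $\Lambda'$ is equivalent to $\int e^{\lambda z} H_D(0, dz) = 1$. To match $\Lambda'$ with the set $\Lambda = \{\lambda : L e_\lambda = 0\}$, I would apply Dynkin's formula~\eqref{eq:iw} to $e_\lambda$ (justified by an approximation by cutoffs, using integrability of $e_\lambda$ against $\nu$ on $\G \setminus B$, which is what makes $L e_\lambda$ meaningful in the first place); since $L e_\lambda = c_\lambda e_\lambda$ for a scalar $c_\lambda$, this gives
\formula{
 1 & = \int_\G e^{\lambda z} H_D(0, dz) - c_\lambda \int_\G e^{\lambda z} G_D(0, dz) .
}
As $\int e^{\lambda z} G_D(0, dz) > 0$, the condition $\int e^{\lambda z} H_D(0,dz) = 1$ is equivalent to $c_\lambda = 0$, i.e.\ to $\lambda \in \Lambda$, so $\Lambda' = \Lambda$. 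Uniqueness of $\mu$ is inherited directly from Deny's theorem, and the proof is complete.

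The remaining loose ends that require care are: verifying that $\check{H}_D * h$ is well-defined as a locally finite measure (which will follow from the harmonicity of $h$ together with standard properties of $\schw'$-convolution, after possibly truncating $h$ on compacts); rigorously carrying out the support-generation argument for $\check{H}_D$; and the approximation argument needed to apply Dynkin's formula~\eqref{eq:iw} to the exponential $e_\lambda$. I expect the first and third items to be routine; the second, the support analysis of the harmonic measure, is the only genuine difficulty.
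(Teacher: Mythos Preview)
Your plan correctly identifies the main engine of the argument --- reduce to Deny's theorem via the identity $h = \check{H}_D * h$ --- and that reduction is exactly what the paper does in its Step~1. However, there is a real gap in how you propose to verify the support hypothesis of Deny's theorem.

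You suggest choosing a \emph{single} bounded open set $D$ for which $\supp(\check{H}_D)$ generates all of $\G$, expecting this to hold ``for $D$ small enough''. This is not known to be true, and the paper explicitly leaves it as a conjecture (see the remark following the proof). The one-dimensional Laplacian already shows that shrinking $D$ does not help: for $D = (-r, r)$ one has $\check{H}_D = \tfrac12\delta_{-r} + \tfrac12\delta_r$, whose support sits in the proper subgroup $r\Z$, no matter how small $r$ is. What the paper actually does is avoid this issue entirely: it shows (using quasi-left continuity of the Lévy process) that the \emph{union} of the supports $\supp(\check{H}_D)$ over all admissible $D$ generates $\G$, then selects a countable family $D_n$ and applies Deny's theorem to a strictly positive convex combination $\check{H}_0 = \sum_n c_n \check{H}_{D_n}$. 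This is the key maneuver you are missing.

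A secondary point: your argument that $\Lambda' = \Lambda$ via Dynkin's formula applied to $e_\lambda$ needs the pointwise expression $L e_\lambda$ to make sense, i.e.\ $\int_{\G \setminus B} e^{\lambda y}\,\nu(dy) < \infty$. For the inclusion $\Lambda \subset \Lambda'$ this is part of the hypothesis, but for $\Lambda' \subset \Lambda$ it is not given a priori --- you only know $e_\lambda * \check{H}_D$ is finite. The paper handles this entirely at the distributional level: from $\alpha_{\lambda,D} := e_\lambda * \check{H}_D(0) < \infty$ it deduces, via $(\alpha_{\lambda,D}-1)e_\lambda = e_\lambda \conv (\check{L} \conv \check{G}_D)$ and the fact that $\check{G}_D * \ph$ is a nonnegative, nontrivial test function, that $e_\lambda \conv \check{L}$ is well-defined with the same sign as $\alpha_{\lambda,D} - 1$. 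This avoids any cutoff approximation and gives both inclusions at once.
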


We remark that when we write $L e_\lambda(x) = 0$ above, we mean that $L e_\lambda(x)$ is well defined pointwise, according to the definition~\eqref{eq:levy:operator}. However, as we will see in the proof of Theorem~\ref{thm:positive:liouville}, this is equivalent to $e_\lambda$ being $L$-harmonic in the weak sense. Additionally, since $L e_\lambda(x) = e_\lambda(x) L e_\lambda(0)$, we have $L e_\lambda(x) = 0$ for every $x \in \G$ whenever $L e_\lambda(x) = 0$ for a single $x \in \G$.

\begin{proof}
\emph{Step 1.} Suppose that the convolution $\check{L} \conv h$ is well-defined and equal to $0$. We consider a bounded open set $D$ such that $0 \in D$, and we use the notation introduced above. Recall that the convolution of distributions is associative when one of the factors is compactly supported. Thus,
\formula{
 0 & = (h \conv \check{L}) \conv \check{G}_D = h \conv (\check{L} \conv \check{G}_D) .
}
By~\eqref{eq:iw2}, we have $\check{L} \conv \check{G}_D = \check{H}_D - \delta_0$, and so $h \conv (\check{H}_D - \delta_0)$ is well-defined and equal to zero. Clearly, $h \conv \delta_0 = h$ is well-defined, and so
\formula{
 h & = 0 + h \conv \delta_0 = h \conv (\check{H}_D - \delta_0) + h \conv \delta_0 = h \conv \check{H}_D .
}
Since $h$ and $\check{H}_D$ are nonnegative measures, we have $h \conv \check{H}_D = h * \check{H}_D$. Thus,
\formula{
 h * \check{H}_D & = h .
}

\emph{Step 2.} The desired result essentially follows now from Deny's theorem (Theorem~\ref{thm:deny}): if the support of $\check{H}_D$ generates a dense subgroup of $\G$, then the equality $h * \check{H}_D = h$ implies the desired representation~\eqref{eq:positive:liouville} of $h$, with $\Lambda$ replaced by the set $\Lambda_D$ of those vectors $\lambda \in \R^d$ for which the function $e_\lambda(x) = e^{\lambda x}$ satisfies $e_\lambda * \check{H}_D = e_\lambda$.

A detailed argument, however, requires some care: we need to show that $\Lambda_D = \Lambda$, and we need to handle the case when the support of $\check{H}_D$ is contained in a proper closed subgroup of $\G$.

\emph{Step 3.} We first show that $\Lambda_D = \Lambda$. In fact, we prove a stronger statement: if $e_\lambda * \check{H}_D$ is not everywhere infinite, then $e_\lambda \conv \check{L}$ is well-defined, and the sign of $e_\lambda \conv \check{L}$ is the same as the sign of $e_\lambda * \check{H}_D - e_\lambda$.

Denote $\alpha_{\lambda, D} = e_\lambda * \check{H}_D(0)$. Observe that
\formula{
 e_\lambda * \check{H}_D(x) & = \int_{\G} e_\lambda(x - y) \check{H}_D(dy) = e_\lambda(x) \int_{\G} e_\lambda(-y) \check{H}_D(dy) = \alpha_{\lambda, D} e_\lambda(x) .
}
In particular, since $e_\lambda * \check{H}_D$ is not everywhere infinite, $\alpha_{\lambda, D}$ is finite. We find that
\formula{
 (\alpha_{\lambda, D} - 1) e_\lambda & = e_\lambda * (\check{H}_D - \delta_0) = e_\lambda \conv (\check{H}_D - \delta_0) = e_\lambda \conv (\check{L} \conv \check{G}_D) .
}
Suppose that $\ph \in \test$ is a nonnegative function which is not identically equal to zero. We have
\formula{
 (\alpha_{\lambda, D} - 1) e_\lambda * \ph & = (e_\lambda \conv (\check{L} \conv \check{G}_D)) * \ph = e_\lambda \conv ((\check{L} \conv \check{G}_D) * \ph) = e_\lambda \conv (\check{L} \conv (\check{G}_D * \ph)) .
}
Observe that also $\psi = \check{G}_D * \ph$ is a nonnegative function in $\test$, not identically equal to zero (because $\check{G}_D$ is compactly supported, nonnegative and not identically equal to zero). Hence,
\formula{
 (\alpha_{\lambda, D} - 1) e_\lambda * \ph & = e_\lambda \conv (\check{L} \conv \psi) = e_\lambda \conv (\check{L} * \psi) = (e_\lambda \conv \check{L}) * \psi = (e_\lambda * \psi) \conv \check{L} .
}
However, if $\beta_{\lambda, D} = e_\lambda * \ph(0)$ and $\gamma_{\lambda, D} = e_\lambda * \psi(0)$, then, by the argument already applied above, we have $e_\lambda * \ph = \beta_{\lambda, D} e_\lambda$ and $e_\lambda * \psi = \gamma_{\lambda, D} e_\lambda$. And since $\ph$ and $\psi$ are nonnegative and not identically equal to zero, we have $\beta_{\lambda, D} > 0$ and $\gamma_{\lambda, D} > 0$. We conclude that
\formula{
 (\alpha_{\lambda, D} - 1) \beta_{\lambda, D} e_\lambda & = \gamma_{\lambda, D} e_\lambda \conv \check{L} .
}
It follows that the sign of $\alpha_{\lambda, D} - 1$ is the same as the sign of $e_\lambda \conv \check{L}$, and our claim is proved. Additionally, $\lambda \in \Lambda_D$ if and only if $\alpha_{\lambda, D} = 1$, which is equivalent to $e_\lambda \conv \check{L} = 0$, that is, $\lambda \in \Lambda$. In other words, $\Lambda_D = \Lambda$.

\emph{Step 4.} We claim that there is no proper closed subgroup of $\G$ which contains the support of $\check{H}_D$ for every bounded open set $D$ such that $0 \in D$. We use the following interpretation of our assumption that $L$ is not concentrated on a proper closed subgroup of $\G$: if $X_t$ is the Lévy process generated by $L$, then the union of supports of all random variables $X_t - X_0$ is not contained in a proper closed subgroup of $\G$.

Suppose that $A$ is a compact set, $0 \notin A$, and $\check{H}_D(-A) = 0$ for every bounded open set $D$ such that $0 \in D$. Then
\formula{
 \pr^0(X_{\tau_D} \in A) & = 0
}
for every bounded open set $D$ such that $0 \in D$. By considering $D = \{x \in \G \setminus A : |x| < r\}$ and passing to the limit as $r \to \infty$, we find that with probability $\pr^0$ one, $X_t \notin A$ for all $t > 0$. Here we use the fact that with probability one, $\tau_D$ is equal to $\tau_{\G \setminus A}$ for $r$ large enough (and this, in turn, is a consequence of \emph{quasi-left continuity} of Lévy processes). In particular, $A$ is disjoint from the support of $X_t - X_0$ for every $t > 0$. It follows that the union of supports of measures $\check{H}_D$ (where $D$ is allowed to be an arbitrary bounded open set such that $0 \in D$) contains the union of supports of random variables $X_0 - X_t$ (where $t > 0$). The latter one generates a dense subgroup of $\G$, and hence the same is true for the former one. Our claim is proved.

\emph{Step 5.} Let $F_D$ denote the support of $\check{H}_D$. We choose a countable family of bounded open sets $D_n$ such that $0 \in D_n$, with the following property: the closure of the union of the supports $F_{D_n}$ is equal to the closure of the union of $F_D$ over all bounded open sets $D$ such that $0 \in D$. In order to do that, we may apply the following procedure: choose a countable base of open sets in $\G$, and for every basic open set $G$ which intersects the union of all $F_D$, choose a set $D_n$ so that $F_{D_n}$ intersects $G$.

Let $\check{H}_0$ be an arbitrary convex combination of the measures $\check{H}_{D_n}$ with positive coefficients. Then the support of $\check{H}_0$ is equal to the closure of the union of the supports $F_{D_n}$. But this set contains the union of all supports $F_D$, and by the result of the previous step, the latter is contained in no proper closed subgroup of $\G$. Hence, the support of $\check{H}_0$ is contained in no proper closed subgroup of $\G$. Since $h * \check{H}_{D_n} = h$ for every $n$, by Fubini's theorem we find that $h * \check{H}_0 = h$. Thus, we may apply Deny's theorem to conclude that $h$ is absolutely continuous with respect to the Lebesgue measure on $\G$, and the density function is given by
\formula{
 h(x) & = \int_{\Lambda_0} e^{\lambda x} \mu(d\lambda)
}
for a unique nonnegative measure $\mu$ on the set $\Lambda_0$ of those vectors $\lambda \in \R^d$ for which the function $e_\lambda(x) = e^{\lambda x}$ satisfies $e_\lambda * \check{H}_0 = e_\lambda$. It remains to show that $\Lambda_0 = \Lambda$.

Suppose that $\lambda \in \Lambda_0$. Then for every $n$ the convolution $e_\lambda * \check{H}_{D_n}$ is not everywhere infinite. By the result of step~3, $e_\lambda \conv \check{L}$ is well-defined, and the sign of $e_\lambda \conv \check{L}$ is the same as the sign of $e_\lambda * \check{H}_{D_n} - e_\lambda$, regardless of $n$. Applying again Fubini's theorem, we find that the sign of $e_\lambda \conv \check{L}$ is the same as the sign of $e_\lambda * \check{H}_0 - e_\lambda$, and since $\lambda \in \Lambda_0$, the latter is zero by assumption. Thus, $e_\lambda \conv \check{L} = 0$, that is, $\lambda \in \Lambda$. Conversely, if $\lambda \in \Lambda$, then $e_\lambda \conv \check{H}_D = e_\lambda$ for every bounded open set $D$ such that $0 \in D$, and thus $e_\lambda \conv \check{H}_0 = e_\lambda$, that is, $\lambda \in \Lambda_0$. This completes the proof.
\end{proof}

We remark that if $L$ is the one-dimensional Laplace operator and $D = (-r, r)$, then $\check{H}_D = \tfrac{1}{2} \delta_{-r} + \tfrac{1}{2} \delta_r$ has support contained in a proper closed subgroup $r \Z$ of $\G = \R$. However, we conjecture that for an arbitrary Lévy operator $L$ satisfying the assumption of Theorem~\ref{thm:positive:liouville} it is always possible find a single bounded open set $D$ such that the support of $\check{H}_D$ is not contained in a proper closed subgroup of~$\G$. For example, if $L$ is the one-dimensional Laplace operator and $D = (-a, b)$ with incommensurable $a, b > 0$, then $\check{H}_D = (a + b)^{-1} (b \delta_{-a} + a \delta_b)$ has support $\{-a, b\}$, which is not contained in a proper closed subgroup of $\G = \R$.

\begin{remark}
\label{rem:positive:liouville}
Let $L$ be an arbitrary Lévy operator on $\R^d$, and let $\G$ be the smallest closed subgroup of $\R^d$ such that $L$ is concentrated on $\G$. Then $L$ can be viewed as an operator which acts independently on each coset $x + \G$ of $\G$ in $\R^d$. Thus, $L$-harmonic functions or measures can be constructed independently on each coset $x + \G$ (as long as the resulting function is locally integrable on $\G$, or the resulting measure is locally finite on $\G$). On each coset, nonnegative $L$-harmonic measures are described by Theorem~\ref{thm:positive:liouville}.
\end{remark}

%
%

\section{An unusual $L$-harmonic function}
\label{sec:counter}

In this section we prove the following unexpected result.

\begin{theorem}[counterexample to the general Liouville's theorem]
\label{thm:counter}
There is a one-dimensional Lévy operator $L$, and a smooth function $h$, with the following properties:
\begin{enumerate}[label=(\alph*)]
\item $L$ is not concentrated on a proper subgroup of $\R$;
\item for every $\eps > 0$ we have $\lim_{|x| \to \infty} |x|^{-\eps} h(x) = 0$;
\item $L h(x) = 0$ for every $x \in \R$ (so that, in particular, the integral in the definition of $L h(x)$ is absolutely convergent);
\item $h$ is $L$-harmonic in the sense of tempered distributions: $\check{L} \schwconv h$ is well-defined and equal to zero, where $\check{L}$ is the convolution kernel of~$L$;
\end{enumerate}
but $h$ is not a polynomial.
\end{theorem}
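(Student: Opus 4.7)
The plan is to follow the three-stage decomposition suggested in Section~\ref{sec:intro:structure}: build a discrete counterexample on $\Z$, reinterpret it as a failure of distributional multiplication, then lift to $\R$.

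\textit{Stage~1, the heart of the matter.} I would first produce an aperiodic probability measure $\mu$ on $\Z$---that is, $\fourier\mu(\xi) = 1$ only for $\xi \in 2\pi\Z$---together with a function $h : \Z \to \R$ of subpolynomial growth which is not a polynomial and satisfies the pointwise identity $h * \mu = h$ with absolutely convergent convolution. The naive Fourier argument would force $\fourier h$ to be supported in $2\pi\Z$ and so $h$ polynomial, but the point of the construction is that this argument does not apply: the $\schw'$-convolution $\mu \schwconv h$ and the associated $\schwprod$-product of Fourier transforms need not be well-defined when $h$ is merely pointwise summable against $\mu$, so in passing from the discrete identity to its Fourier counterpart one loses information. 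Concretely, I would choose $\mu$ whose characteristic function has carefully arranged near-fixed points $\xi_k \to 0$ (with $\fourier\mu(\xi_k)$ extremely close to $1$), and build $h$ as a lacunary-type trigonometric series, say $h(n) = \sum_k a_k \cos(\xi_k n)$, with amplitudes $a_k$ tending to infinity slowly enough to keep $h(n) = o(|n|^\eps)$ for every $\eps > 0$ but arranged so that the partial errors $a_k(\fourier\mu(\xi_k) - 1)$ cascade and cancel between consecutive frequencies, producing $h * \mu - h \equiv 0$ on $\Z$.

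\textit{Stage~2.} Once such a pair $(\mu, h)$ is in hand, encode them as the tempered distributions $M = \sum_n \mu(\{n\}) \delta_n$ and $H = \sum_n h(n) \delta_n$ on $\R$. Then $f := 1 - \fourier M$ is a bounded nonnegative continuous $2\pi$-periodic function vanishing precisely on $2\pi\Z$, while $g := \fourier H$ is a nonzero tempered distribution whose support meets $\R \setminus 2\pi\Z$. Verifying that the $\schw'$-convolution $H \schwconv (\delta_0 - M)$ makes sense (using absolute convergence of the discrete sum together with subpolynomial growth) and invoking the Fourier exchange formula yields the $\schwprod$-identity $f \cdot g = 0$ in $\schw'$, furnishing the distributional counterexample of part~(a) of Theorem~\ref{thm:counter:0} after an elementary localisation.

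\textit{Stage~3, lifting to $\R$.} Fix a smooth nonnegative bump $\ro \in \schw$ supported in $(-\tfrac{1}{2}, \tfrac{1}{2})$ with $\int \ro = 1$, and define a Lévy operator $L$ by spreading the atoms of $\mu$ through
\[
 \nu(dy) = \sum_{n \in \Z \setminus \{0\}} \mu(\{n\}) \ro(y - n) dy ,
\]
plus a compensating drift so that the Fourier symbol of $L$ has the form $\Psi(\xi) = (1 - \fourier\mu(\xi))\fourier\ro(\xi)$ up to harmless corrections vanishing only at $\xi = 0$. Since $\nu$ has a genuine absolutely continuous part, $L$ is not concentrated on any proper closed subgroup of $\R$. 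Now set $\tilde h(x) = \sum_n h(n) \ro(x - n)$: this is smooth, subpolynomial of every positive order (the sum is locally finite because $\ro$ is compactly supported), and non-polynomial inheriting from $h$. A direct computation, using Stage~2 and the fact that multiplication by the smooth function $\fourier\ro$ is always admissible in $\schw'$, gives $\Psi \cdot \fourier\tilde h = 0$, equivalently $\check L \schwconv \tilde h = 0$; pointwise $L \tilde h = 0$ follows from Stage~1 and translation invariance of convolution with $\ro$. The dominant technical obstacle is Stage~1---simultaneously engineering $\mu$, the frequencies $(\xi_k)$, and the amplitudes $(a_k)$ so that the discrete cancellation holds identically on $\Z$ while $h$ grows slower than every positive power of $|n|$ yet genuinely fails to be polynomial. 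Stages~2 and~3 are, by comparison, distributional repackaging and Fourier-analytic bookkeeping with the smoothing kernel $\ro$.
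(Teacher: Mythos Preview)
Your three-stage outline mirrors the paper's structure, but Stages~1 and~3 both contain genuine gaps that the paper resolves in quite different ways.

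\textbf{Stage~1.} The lacunary trigonometric ansatz $h(n)=\sum_k a_k\cos(\xi_k n)$ does not do what you want. Formally, $h*\mu-h=\sum_k a_k(\fourier\mu(\xi_k)-1)\cos(\xi_k n)$, and since the $\cos(\xi_k\cdot)$ are linearly independent on $\Z$ for distinct $\xi_k\in(0,\pi)$, the only way this vanishes identically is $\fourier\mu(\xi_k)=1$ for every $k$, contradicting aperiodicity. Your ``cascading cancellation'' would have to be something else entirely, and as stated the series does not even converge: with $a_k\to\infty$ you get $h(0)=\sum_k a_k=\infty$. The paper's discrete construction is completely different and entirely real-space: one takes $p_k=2^{-k-2}$, $x_k=2^{2k^2}$, $L_0 f(n)=\sum_k p_k(f(n+x_k)+f(n-x_k)-2f(n))$, and builds $h$ iteratively as an extremely sparse sequence supported on $\{0\}\cup\{\pm(m+x_m):m\ge0\}$. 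At step $m$ one places the value $a_m$ at $\pm(m+x_m)$ so as to force $L_0 h(\pm m)=0$; a combinatorial lemma ($|n\pm x_k|=m+x_m$ forces $|n|=m$ or $|n|\ge\tfrac12 x_m$) guarantees that this modification does not disturb $L_0 h$ at points already handled, and the rapid growth of $x_k$ controls $|a_m|$.

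\textbf{Stage~3.} Your lifting is miscomputed and does not preserve harmonicity. If $\nu=\sum_{n}\mu(\{n\})\ro(\cdot-n)$ with $\ro$ a probability density, the characteristic exponent of the associated compound Poisson operator is
\[
\Psi(\xi)=1-\fourier\nu(\xi)=1-\fourier\mu(\xi)\,\fourier\ro(\xi),
\]
not $(1-\fourier\mu(\xi))\fourier\ro(\xi)$. With $\tilde h=H*\ro$ one then gets, using $(1-\fourier\mu)\cdot\fourier H=0$,
\[
\Psi\cdot\fourier\tilde h
=(1-\fourier\mu\,\fourier\ro)\,\fourier\ro\cdot\fourier H
=\fourier\ro\,(1-\fourier\ro)\cdot\fourier H,
\]
which has no reason to vanish. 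Equivalently, in real space $L\tilde h=(H*\ro)*(\mu*\ro)-(H*\ro)=H*\ro*(\ro-\delta_0)\ne0$. Smoothing both the jump measure and the solution with the same bump introduces an extra convolution that destroys the identity. The paper's fix is different: it keeps the L\'evy measure discrete on $\Z$ and instead adds the Laplacian, $Lf=f''+\sum_k p_k(f(\cdot+x_k)+f(\cdot-x_k)-2f)$, so that the Gaussian part alone forces $\Psi(\xi)\ne0$ for $\xi\ne0$. Crucially, $h$ is then \emph{not} obtained by mollifying the discrete sequence; one reruns the sparse iterative construction in the continuous setting, placing smooth bumps $\ph_m$ on the intervals $(\pm(m+x_m)-\tfrac12,\pm(m+x_m)+\tfrac12)$ and choosing each $\ph_m$ so that the second-derivative term is absorbed into the recursion $p_m\ph_m(z)=-L h_{m-1}(z+m)$. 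The numbers $x_m$ are fixed only at step $m$, large enough to dominate $\sup(|\ph_m|+|\ph_m''|)$, which is what ultimately yields the subpolynomial growth.
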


More precisely, we consider a one-dimensional symmetric Lévy operator $L$ of the form
\formula{
 L f(x) & = f''(x) + \sum_{k = 0}^\infty p_k \bigl(f(x + x_k) + f(x - x_k) - 2 f(x)\bigr) ,
}
where $p_k = 2^{-k - 2}$ and $x_k$ is a rapidly increasing sequence.

Theorem~\ref{thm:counter} extends trivially to $\R^d$ by considering the Lévy operator which is the sum of operators $L$ defined above acting on each coordinate $x_j$, and the corresponding harmonic function which is the product of $h(x_j)$ for each coordinate $x_j$.

The construction of $h$ is somewhat technical. For this reason, we begin with a simpler, discrete variant of Theorem~\ref{thm:counter}.


\subsection{Discrete case}
\label{sec:counter:discrete}

In this section, we prove the following result, which will prepare us for the proof of Theorem~\ref{thm:counter}.

\begin{theorem}[counterexample to Liouville's theorem for lattice random walks]
\label{thm:counter:discrete}
Let
\formula{
 p_k & = 2^{-k - 2} & x_k & = 2^{2 k^2} .
}
There is a doubly infinite sequence $h(n)$ which satisfies
\formula{
 h(n) & = \sum_{k = 0}^\infty p_k \bigl(h(n + x_k) + h(n - x_k)\bigr)
}
for every $n \in \Z$, and such that for every $\eps > 0$,
\formula{
 \lim_{|n| \to \infty} \frac{h(n)}{|n|^\eps} = 0 ,
}
but $h(n)$ is not a polynomial sequence. Furthermore, for every $\eps > 0$, we have
\formula{
 \lim_{|n| \to \infty} \frac{1}{|n|^\eps} \sum_{k = 0}^\infty p_k (|h(n + x_k)| + |h(n - x_k)|) & = 0 .
}
\end{theorem}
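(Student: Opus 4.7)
Plan. My plan is to construct $h$ as a telescoping series $h=\sum_{k\ge 1} a_k \phi_k$ of ``scale-$k$ bumps'', exploiting the super-exponential gap $x_{k+1}/x_k = 2^{2(2k+1)}$ between consecutive jump sizes. Each $\phi_k$ is to be harmonic for $L$ modulo a residual error localised at the next scale $x_{k+1}$, which is then absorbed by a suitable choice of $a_{k+1}\phi_{k+1}$; in the limit the total error should telescope to zero, giving $Lh=0$ exactly.

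For the bump $\phi_k$ I would try a bounded, odd, piecewise-affine function on $\Z$ with breakpoints only at integer multiples of $x_k$, localised in $|n|\le M_k$ for some $x_k \ll M_k \ll x_{k+1}$ (say $M_k=\lfloor \sqrt{x_k x_{k+1}}\rfloor$), and saturated to $\pm\|\phi_k\|_\infty$ outside. Writing $\Delta_y f(n)=f(n+y)+f(n-y)-2f(n)$, one then expects
\begin{enumerate}[label=(\roman*)]
\item $p_j\Delta_{x_j}\phi_k\approx 0$ for $j<k$, because $\phi_k$ is affine on scales much coarser than $x_j$;
\item $p_k\Delta_{x_k}\phi_k$ vanishes on each affine piece and produces only explicit ``carries'' at the breakpoints;
\item $p_j\Delta_{x_j}\phi_k$ for $j>k$ reduces to saturation contributions, since $|n\pm x_j|>M_k$ puts the argument outside the transition region.
\end{enumerate}
All the surviving error thus lives on the breakpoint lattice and near $|n|\sim M_k$, i.e.\ well inside a single affine piece of $\phi_{k+1}$, which can absorb it.

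Once $\phi_k$ is in place, the amplitudes $a_k$ are chosen inductively so that $a_{k+1}\phi_{k+1}$ cancels the leading error of $a_k\phi_k$. The super-exponential scale gap should leave enough flexibility to arrange $a_k$ growing only polylogarithmically in $k$, which, together with the localisation and uniform boundedness of the $\phi_k$, gives $|h(n)|=o(|n|^\eps)$ for every $\eps>0$ and non-constancy of the resulting $h$. The same localisation, combined with the geometric decay $p_k = 2^{-k-2}$ and the fact that at most one scale $j$ with $x_j\le 2|n|$ contributes a nontrivial $\phi_k(n\pm x_j)$ term per block, yields absolute convergence of the harmonic-equation sum and the corresponding subpolynomial bound $|n|^{-\eps}\sum_k p_k(|h(n+x_k)|+|h(n-x_k)|)\to 0$.

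The main obstacle is step~(ii): one must design the breakpoint ``carries'' of $\phi_k$ so that, together with the saturation error from (iii), they are matched exactly by the leading jump-difference of $\phi_{k+1}$ at scale $x_{k+1}$, not merely approximately. Any mistuning would produce a cumulative correction that either diverges or destroys the subpolynomial bound. Once this matching is arranged --- and the rapid scale separation $x_{k+1}/x_k \to \infty$ gives more than enough room to do so --- the pointwise identity $Lh(n)=0$ and the two growth estimates in the theorem reduce to elementary geometric-series bookkeeping.
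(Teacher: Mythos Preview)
Your proposal has a genuine gap, and the paper's construction is quite different from what you sketch.

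The paper does not use piecewise-affine bumps at all. It builds an \emph{extremely sparse} even sequence: $h(0)=1$, $h(n)=a_m$ when $|n|=m+x_m$, and $h(n)=0$ otherwise. The coefficients $a_m$ are chosen inductively so that $L_0 h(\pm m)=0$. A short combinatorial lemma (if $|n\pm x_k|=m+x_m$ then either $|n|=m$ or $|n|\ge\tfrac12 x_m$) guarantees that inserting the value $a_m$ at $\pm(m+x_m)$ affects $L_0 h$ only at $n=\pm m$, so the inductive choices never interfere. For all $n$ outside the sparse support one has $h(n)=0$ and $h(n\pm x_k)=0$ for every $k$, so $L_0 h(n)=0$ there for free. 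The growth bound then comes from the recursion $|a_m|\le p_m^{-1}\cdot O((1+m)^\eps)$ together with $x_m\ge 2^{2m^2}$.

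Your scheme breaks at item~(iii), not~(ii). Take your bounded odd bump $\phi_k$, saturated to $\pm c_k$ for $|n|>M_k$. For every $|n|\le M_k$ and every $j>k$ one has $\phi_k(n+x_j)=c_k$ and $\phi_k(n-x_j)=-c_k$, hence $\Delta_{x_j}\phi_k(n)=-2\phi_k(n)$, and summing over $j>k$ gives
\[
\sum_{j>k}p_j\,\Delta_{x_j}\phi_k(n)\;=\;-2^{-k-1}\,\phi_k(n)\qquad\text{for all }|n|\le M_k.
\]
This error is not ``localised at the next scale $x_{k+1}$'': it is spread over the entire transition region of $\phi_k$ and has precisely the shape of $\phi_k$ itself. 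You cannot cancel it with $a_{k+1}\phi_{k+1}$, because $\phi_{k+1}$ is affine on all of $|n|\le M_k$ (its first breakpoint is at $x_{k+1}\gg M_k$), whereas the error is piecewise affine with many breakpoints at multiples of $x_k$. If instead you try to absorb this term internally by tuning the slopes of $\phi_k$ so that $p_k\Delta_{x_k}\phi_k=2^{-k-1}\phi_k$ on each piece, matching slopes forces the second-difference recursion $s_{m+1}+s_{m-1}-2s_m=2s_m$, i.e.\ $s_{m+1}=4s_m-s_{m-1}$, whose solutions grow like $(2+\sqrt3)^m$; since the number of pieces $M_k/x_k\to\infty$, this destroys the boundedness of $\phi_k$. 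Either way, the telescoping mechanism you rely on does not close.
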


The sequence $h(n)$ constructed in the proof is extremely sparse: we have $h(n) = 0$ unless $n = 0$ or $|n| = k + x_k$ for some $k = 0, 1, 2, \ldots\,$ More precisely, we set $h(0) = 1$, $h(n) = a_k$ if $|n| = k + x_k$, with appropriately chosen $a_k$, and $h(n) = 0$ for all other indices $n$.

Before we proceed with the construction of the sequence $h(n)$, we make the following observation.

\begin{lemma}
\label{lem:counter:sparse}
Suppose that $x_{k + 1} \ge 2 x_k$ and $x_k \ge 2 k$ for $k = 0, 1, 2, \ldots$ Then, for $n \in \Z$ and $k, m = 0, 1, 2, \ldots$\,,
\formula{
 |n \pm x_k| & = m + x_m && \text{implies that} & \text{$|n| = m$} && \text{or} && \text{$|n| \ge \tfrac{1}{2} x_m$} .
}
In particular, the above property holds when $x_k = 2^{2 k^2}$.
\end{lemma}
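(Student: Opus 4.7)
The plan is to reduce the statement to a handful of elementary arithmetic inequalities by first unpacking the absolute value. The hypothesis $|n \pm x_k| = m + x_m$ is equivalent to $n = \pm x_k \pm (m + x_m)$ with independent sign choices, and so it forces $|n|$ to equal either $x_k + x_m + m$ or $|x_k - x_m - m|$. I would dispose of the first possibility immediately, since $x_k + x_m + m \ge x_m \ge x_m / 2$ always holds, and then focus on the more delicate second case.

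For the second case, $|n| = |x_k - x_m - m|$, I would split on whether $k = m$ or not. When $k = m$ the quantity collapses to $|{-m}| = m$, which gives the first alternative in the conclusion. When $k \ne m$, the key input is the doubling assumption $x_{k+1} \ge 2 x_k$, which iterates to $x_{\max(k,m)} \ge 2 x_{\min(k,m)}$, together with the bound $m \le x_m / 2$ coming from $x_m \ge 2 m$. A short computation in each of the subcases $k > m$ and $k < m$ then yields $|x_k - x_m - m| \ge x_m - m \ge x_m / 2$, which is the second alternative.

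The concluding claim that $x_k = 2^{2k^2}$ satisfies both hypotheses is routine: the ratio $x_{k+1} / x_k = 2^{4 k + 2}$ is at least $2$, and $2^{2 k^2} \ge 2 k$ is trivial for $k = 0$ and follows from $2^{2 k^2} \ge 2^{2 k} \ge 2 k$ for $k \ge 1$. I do not anticipate any genuine obstacle here; the lemma is purely combinatorial in the indices $k$ and $m$ and reduces entirely to the two arithmetic properties built into its hypotheses. The only mildly delicate point to flag is that the hypothesis $x_k \ge 2 k$ is used precisely to absorb the additive term $m$ into $x_m / 2$ in the subcase $k \ne m$ — without it, one could only conclude $|n| \ge x_m - m$, which is not strong enough to give the clean bound stated.
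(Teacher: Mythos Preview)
Your proposal is correct and follows essentially the same route as the paper. The paper also splits on whether $k<m$, $k=m$, or $k>m$; the only cosmetic difference is that it handles the sign ambiguity in $|n\pm x_k|=m+x_m$ via the reverse triangle inequality (writing, e.g., $|n|\ge m+x_m-x_k$ when $k<m$ and $|n|\ge x_k-m-x_m$ when $k>m$) rather than first enumerating the four possible values of $n$ as you do. One small wording issue: in your subcase $k<m$ the intermediate bound you state, $|x_k-x_m-m|\ge x_m-m$, is not the natural one there---you get $x_m+m-x_k\ge x_m/2+m\ge x_m/2$ directly from $x_k\le x_m/2$---but the conclusion is of course fine.
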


\begin{proof}
Suppose that $|n \pm x_k| = m + x_m$. If $k < m$, then
\formula{
 |n| & \ge m + x_m - x_k \ge x_m - x_{m - 1} \ge x_m - \tfrac{1}{2} x_m = \tfrac{1}{2} x_m .
}
If $k = m$, then either $|n| = m$ or $|n| = m + 2 x_m \ge \tfrac{1}{2} x_m$. Finally, if $k > m$, then
\formula{
 |n| & \ge x_k - m - x_m \ge x_{m + 1} - m - x_m \ge 2 x_m - m - x_m = x_m - m \ge \tfrac{1}{2} x_m . \qedhere
}
\end{proof}

From now on we let $p_k = 2^{-k - 2}$ and $x_k = 2^{2 k^2}$, as in Theorem~\ref{thm:counter:discrete}. For convenience, let us write
\formula*[eq:counter:operator]{
 L_0 f(n) & = \sum_{k = 0}^\infty p_k \bigl(f(n + x_k) + f(n - x_k) - 2 f(n)\bigr) \\
 & = \sum_{k = 0}^\infty p_k \bigl(f(n + x_k) + f(n - x_k)\bigr) - f(n)
}
whenever $f(n)$ is a doubly infinite sequence such that the above series converges absolutely (so that $L_0$ is a Lévy operator acting on $\Z$). The construction of the sequence $h(n)$ is an iterative procedure, which can be summarised as follows. In the initial step, we let $h_{-1}(n) = \ind_{\{0\}}(n)$. Next, for $m = 0, 1, 2, \ldots$ we define $h_m(n) = h_{m - 1}(n)$ except at two values of $n$, namely, $n = \pm (m + x_m)$. At these values we modify $h_m(n)$ in such a way that $L_0 h_m(m) = L_0 h_m(-m) = 0$. The key observation is that, by Lemma~\ref{lem:counter:sparse}, we also have $L_0 h_m(n) = L_0 h_{m - 1}(n) = 0$ if $|n| < m$. Finally, we define $h(n)$ to be the limit of $h_m(n)$ as $m \to \infty$.

We proceed with the detailed construction of $h(n)$. We let
\formula{
 h_{-1}(n) & = \ind_{\{0\}}(n) ,
}
and
\formula{
 h_m(n) & = h_{m - 1}(n) + a_m \ind_{\{-m - x_m, m + x_m\}}(n) \\
 & = \ind_{\{0\}}(n) + \sum_{j = 0}^m a_j \ind_{\{-j - x_j, j + x_j\}}(n)
}
for $m = 0, 1, 2, \ldots$\,, where
\formula{
 a_m & = -\frac{L_0 h_{m - 1}(m)}{p_m} = -\frac{1}{p_m} \biggl(\sum_{k = 0}^\infty p_k \bigl(h_{m - 1}(m + x_k) + h_{m - 1}(m - x_k)\bigr) - h_{m - 1}(m)\biggr)
}
if $m = 1, 2, \ldots$\,, and
\formula{
 a_0 & = -\frac{L_0 h_{-1}(0)}{2 p_0} = 2 .
}
Finally, we define
\formula{
 h(n) & = \lim_{m \to \infty} h_m(n) \\
 & = \ind_{\{0\}}(n) + \sum_{m = 0}^\infty a_m \ind_{\{-m - x_m, m + x_m\}}(n) .
}
Below we prove Theorem~\ref{thm:counter:discrete} by showing that the sequence $h(n)$ constructed above has all the desired properties. We break the proof into three lemmas. First, we prove that $h(n)$ is $L_0$-harmonic.

\begin{lemma}
\label{lem:counter:harm}
With the above definitions, we have
\formula{
 L_0 h(n) & = 0
}
for every $n \in \Z$.
\end{lemma}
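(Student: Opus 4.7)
The plan is to exploit the iterative construction in two moves: first, establish $L_0 h_m(n) = 0$ for every $m \ge |n|$; second, pass to the limit by a term-by-term stabilisation argument based on Lemma~\ref{lem:counter:sparse}.

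For the first move, I would argue by induction on $m$. The construction is manifestly symmetric ($h_m(-\cdot) = h_m(\cdot)$), so the identity $L_0 h_m(m) = 0$ — built into the definition of $a_m$ via a direct computation of $L_0 \ind_{\{\pm(m + x_m)\}}(m)$ (which equals $p_m$ for $m \ge 1$ and $2 p_0$ for $m = 0$, explaining the normalisation of $a_0$) — automatically gives $L_0 h_m(-m) = 0$. For the induction step one checks that passing from $h_{m-1}$ to $h_m$ does not alter $L_0$ at any $n$ with $|n| < m$. Since $h_m - h_{m-1} = a_m \ind_{\{\pm(m+x_m)\}}$, the difference $L_0 h_m(n) - L_0 h_{m-1}(n)$ is a sum of terms of the form $a_m p_k \ind_{\{\pm(m+x_m)\}}(n \pm x_k)$ together with $-a_m \ind_{\{\pm(m+x_m)\}}(n)$. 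The last indicator vanishes because $|n| < m < m + x_m$, and Lemma~\ref{lem:counter:sparse} rules out each of the former: the two admissible alternatives $|n| = m$ and $|n| \ge \tfrac{1}{2}x_m$ both contradict $|n| < m$ (the second via $x_m \ge 2m$). Iterating yields $L_0 h_m(n) = L_0 h_{|n|}(n) = 0$ for every $m \ge |n|$.

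For the limit passage, fix $n$ and pick $m$ so large that $\tfrac{1}{2} x_{m+1} > |n|$. Since $h - h_m = \sum_{k > m} a_k \ind_{\{\pm(k+x_k)\}}$, the same Lemma~\ref{lem:counter:sparse} forbids any identity $|n \pm x_j| = k + x_k$ with $k > m$: it would force $|n| = k > m \ge |n|$ or $|n| \ge \tfrac{1}{2} x_k \ge \tfrac{1}{2} x_{m+1} > |n|$. Likewise $|n| < k + x_k$ prevents $n = \pm(k + x_k)$. Consequently $h(n \pm x_j) = h_m(n \pm x_j)$ for every $j$ and $h(n) = h_m(n)$, so the series defining $L_0 h(n)$ reduces to the finite sum $L_0 h_m(n)$ inherited from the finite support of $h_m$. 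This simultaneously confirms absolute convergence of the defining series and yields $L_0 h(n) = L_0 h_m(n) = 0$.

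The heart of the argument is Lemma~\ref{lem:counter:sparse}: the very rapid growth of the sequence $x_k$ ensures that the iterative modifications $h_m \to h_{m+1}$ take place so far away from the indices already "under control" that they cannot interact through the jump kernel $\sum_k p_k(\delta_{x_k} + \delta_{-x_k})$. There is accordingly no real analytic obstacle in this lemma beyond careful bookkeeping — no estimate on $|a_k|$ is needed here; the growth bounds on $|a_k|$, together with the size and decay statements for $h$, will come later and are what makes the full counterexample (Theorem~\ref{thm:counter:discrete}) work.
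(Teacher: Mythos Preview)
Your proposal is correct and follows essentially the same approach as the paper: both arguments hinge on Lemma~\ref{lem:counter:sparse} to show stabilisation of $h$ at the sites $n \pm x_k$, and on the defining property of $a_m$ to get $L_0 h_m(\pm m) = 0$. The paper's organisation is a little leaner --- it sets $m = |n|$ directly and shows $L_0 h(n) = L_0 h_{|n|}(n)$ in one stroke rather than proving the intermediate statement $L_0 h_m(n) = 0$ for all $m \ge |n|$ by induction --- but the logical content is the same. One tiny imprecision: in your limit step you invoke ``$k > m \ge |n|$'' while only having stipulated $\tfrac{1}{2}x_{m+1} > |n|$; just add $m \ge |n|$ to your choice of $m$ (which ``$m$ large enough'' of course accommodates).
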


\begin{proof}
By Lemma~\ref{lem:counter:sparse}, for a fixed $n \in \Z$ and all $k = 0, 1, 2, \ldots$ we have
\formula{
 |n + x_k| & \ne m + x_m \quad \text{and} \quad |n - x_k| \ne m + x_m && \text{if $m > |n|$.}
}
It follows that $h(n \pm x_k) = h_m(n \pm x_k)$, where $m = |n|$. Therefore,
\formula**{
 L_0 h(n) & = \sum_{k = 0}^\infty p_k \bigl(h(n + x_k) + h(n - x_k)\bigr) - h(n) \notag \\
 & = \sum_{k = 0}^\infty p_k \bigl(h_m(n + x_k) + h_m(n - x_k)\bigr) - h_m(n) \label{eq:counter:rec1} \\
 & = L_0 h_m(n) . \notag
}
We claim that $L_0 h_m(n) = 0$. If $n = m = 0$, then we simply have
\formula{
 L_0 h_0(0) & = p_0 \bigl(h_0(x_0) + h_0(-x_0)\bigr) - h_0(0) = 2 p_0 a_0 - 1 = 0 .
}
Suppose now that $n = m > 0$. By the definition of $h_m$, we have
\formula{
 L_0 h_m(m) & = L_0 h_{m - 1}(m) + a_m L_0 \ind_{\{-m - x_m, m + x_m\}}(m) .
}
Since $|m \pm x_k| \ne m + x_m$ if $k \ne m$, and also $|m - x_m| \ne m + x_m$, we find that
\formula{
 L_0 \ind_{\{-m - x_m, m + x_m\}}(m) & = p_m \ind_{\{-m - x_m, m + x_m\}}(m + x_m) - \ind_{\{-m - x_m, m + x_m\}}(m) = p_m .
}
Combining the above two identities and the definition $a_m = -p_m^{-1} L_0 h_{m - 1}(m)$ of $a_m$, we conclude that
\formula[eq:counter:rec2]{
 L_0 h_m(m) & = L_0 h_{m - 1}(m) + a_m p_m = 0 ,
}
as desired. By a similar argument (or by symmetry), we also have $L_0 h_m(-m) = 0$, and our claim is thus proved.
\end{proof}

In order to prove Theorem~\ref{thm:counter:discrete}, it remains to show appropriate estimates of $h(n)$. Recall that for $m = 1, 2, \ldots$,
\formula{
 p_m a_m & = -L_0 h_{m - 1}(m) = -\sum_{k = 0}^\infty p_k \bigl(h_{m - 1}(m + x_k+ h_{m - 1}(m - x_k)\bigr) + h_{m - 1}(m) .
}

\begin{lemma}
\label{lem:counter:aux}
With the above definitions, for every $\eps > 0$ there is $C_\eps$ such that
\formula[eq:counter:claim]{
 p_m |a_m| & \le \sum_{k = 0}^\infty p_k \bigl(|h_{m - 1}(m + x_k)| + |h_{m - 1}(m - x_k)|\bigr) + |h_{m - 1}(m)| \le C_\eps (1 + m)^\eps
}
for every $m = 0, 1, 2, \ldots$
\end{lemma}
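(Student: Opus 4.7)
The first inequality is trivial from the defining formula for $a_m$ and the triangle inequality. For the second inequality, I would proceed by induction on $m$, with inductive hypothesis $p_j |a_j| \le C_\eps (1+j)^\eps$ for $j < m$; the base case $m = 0$ (where $a_0 = 2$) is immediate provided $C_\eps \ge 1$.

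The key structural observation is that $h_{m-1}$ is supported on the extremely sparse set $\{0\} \cup \{\pm(j + x_j) : 0 \le j < m\}$, with $h_{m-1}(0) = 1$ and $|h_{m-1}(\pm(j+x_j))| = |a_j|$. Hence a term $p_k |h_{m-1}(m \pm x_k)|$ is nonzero only if $m - x_k = 0$ (producing a single negligible contribution $p_{k_0} \le \tfrac14$), or $|m \pm x_k| = j + x_j$ for some $j < m$. In the latter case, Lemma~\ref{lem:counter:sparse} (applied with $n = m$ and the lemma's $m$ replaced by $j$) forces $x_j \le 2m$, so $j \le J_m := \lfloor\sqrt{(1+\log_2 m)/2}\rfloor = O(\sqrt{\log m})$; moreover, for each such $j$ the four sign choices give at most four candidate values of $x_k$, each of which pins down a unique $k(j)$ since $(x_k)$ is strictly increasing. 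Analogously, $|h_{m-1}(m)|$ is nonzero only when $m = j^* + x_{j^*}$ for some $j^* \le J_m$, in which case $|h_{m-1}(m)| = |a_{j^*}|$.

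The inductive hypothesis gives $|a_j| \le 4 C_\eps \, 2^j (1+j)^\eps$, and therefore
\formula{
p_{k(j)} |a_j| & \le C_\eps \, 2^{j - k(j)} (1 + j)^\eps \le C_\eps \, 2^{J_m} (1 + J_m)^\eps ,
}
with the same bound for $|h_{m-1}(m)| = |a_{j^*}|$. Summing over the $O(J_m)$ admissible pairs $(j, k(j))$ produces
\formula{
\text{middle quantity} & \le C_\eps \cdot O\bigl(\sqrt{\log m}\bigr) \cdot 2^{O(\sqrt{\log m})} (1 + \log m)^{\eps/2} + O(1) .
}

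The central obstacle is verifying that the right-hand side above is dominated by $C_\eps (1 + m)^\eps$. This rests on the observation that $2^{\sqrt{\log_2 m}} = m^{1/\sqrt{\log_2 m}}$ grows slower than every positive power of $m$; consequently the bound is $o(m^\eta)$ for every $\eta > 0$, and in particular lies below $(1 + m)^\eps$ once $m \ge M_0(\eps)$ for some threshold $M_0(\eps)$. The finitely many initial indices $m < M_0(\eps)$ are accommodated by choosing $C_\eps$ at the outset to dominate the finite collection $\{p_j |a_j|/(1+j)^\eps : j < M_0(\eps)\}$, which is well-defined since the $a_j$'s are concrete numbers determined by the recursion. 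This closes the induction and yields the desired estimate.
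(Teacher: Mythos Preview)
Your argument is correct, and the overall inductive scheme is the same as the paper's. The execution, however, is genuinely different at the crucial step.

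You \emph{count} the nonzero terms: using Lemma~\ref{lem:counter:sparse} to force $x_j \le 2m$, you get at most $O(J_m)$ contributing pairs with $J_m = O(\sqrt{\log m})$, bound each by $C_\eps\, 2^{J_m} (1+J_m)^\eps$, and then appeal to the fact that $J_m\, 2^{J_m} (1+J_m)^\eps$ is subpolynomial in $m$ to close the induction for large $m$. This works, but the induction closes with slack, and you must be careful that the implicit constants in the $O(\cdot)$ terms are absolute (independent of $C_\eps$) so that the threshold $M_0(\eps)$ can be fixed before $C_\eps$ is chosen; your last paragraph handles this correctly.

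The paper instead bounds each nonzero value $|h_{m-1}(m \pm x_k)|$ \emph{uniformly in $k$} by $\tfrac{1}{2} C_\eps (1+m)^\eps$. The point is that Lemma~\ref{lem:counter:sparse} gives $m \ge \tfrac12 x_j$, and since $x_j = 2^{2j^2}$ grows so much faster than $2^{j+3}(1+j)^\eps$, one has $|a_j| \le C_\eps\, 2^{j+2}(1+j)^\eps \le \tfrac12 C_\eps (1+m)^\eps$ directly. Then one simply sums $\sum_{k} 2 p_k = 1$ over \emph{all} $k$ (not just the finitely many contributing ones), obtaining exactly $C_\eps (1+m)^\eps$ with no slack and no term-counting. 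This is cleaner and avoids the big-$O$ bookkeeping, but your route exhibits more explicitly the sparsity mechanism that makes the construction work.
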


\begin{proof}
Fix $\eps > 0$. Recall that $p_k = 2^{-k - 2}$ and $x_k = 2^{2 k^2}$. For $j$ large enough, say, $j > j_0$, we have
\formula{
 2^{j + 3} (1 + j)^\eps & \le (1 + \tfrac{1}{2} x_j)^\eps .
}
We choose $m_0$ so that
\formula{
 2^{j_0 + 3} (1 + j_0)^\eps & \le (1 + m_0)^\eps .
}
With this choice, we have the following property:
\formula[eq:counter:property]{
 2^{j + 3} (1 + j)^\eps & \le (1 + m)^\eps && \text{when $m \ge m_0$ and $m \ge \tfrac{1}{2} x_j$.}
}
We choose $C_\eps$ large enough, so that the desired bound~\eqref{eq:counter:claim} holds for $m = 1, 2, \ldots, m_0$, and we prove by induction that~\eqref{eq:counter:claim} also holds for $m > m_0$.

Suppose that for some $m > m_0$ formula~\eqref{eq:counter:claim} holds with $m$ replaced by any smaller number. The first inequality in~\eqref{eq:counter:claim} follows from the definition of $a_m$, so in order to complete the proof we only need to show the other inequality.

If $h_{m - 1}(m) \ne 0$, then $m = j + x_j$ and $h_{m - 1}(m) = a_j$ for some $j < m$. Since~\eqref{eq:counter:claim} holds with $m$ replaced by $j$, we obtain
\formula{
 |h_{m - 1}(m)| & = |a_j| \le \frac{C_\eps (1 + j)^\eps}{p_j} = C_\eps 2^{j + 2} (1 + j)^\eps .
}
Since $m > x_j$, we may apply~\eqref{eq:counter:property} to find that
\formula{
 |h_{m - 1}(m)| & \le \frac{C_\eps}{2} \, 2^{j + 3} (1 + j)^\eps \le \frac{C_\eps}{2} \, (1 + m)^\eps .
}
Similarly, if $h_{m - 1}(m \pm x_k) \ne 0$, then $|m \pm x_k| = j + x_j$ and $h_{m - 1}(m \pm x_k) = a_j$ for some $j < m$, so that again
\formula{
 |h_{m - 1}(m \pm x_k)| & = |a_j| \le \frac{C_\eps (1 + j)^\eps}{p_j} = C_\eps 2^{j + 2} (1 + j)^\eps .
}
By Lemma~\ref{lem:counter:sparse} we find that $m \ge \tfrac{1}{2} x_j$, and hence~\eqref{eq:counter:property} again leads to
\formula{
 |h_{m - 1}(m \pm x_k)| & \le \frac{C_\eps}{2} \, 2^{j + 3} (1 + j)^\eps \le \frac{C_\eps}{2} \, (1 + m)^\eps .
}
It follows that
\formula{
 & \sum_{k = 0}^\infty p_k \bigl(|h_{m - 1}(m + x_k)| + |h_{m - 1}(m - x_k)|\bigr) + |h_{m - 1}(m)| \\
 & \qquad \le \sum_{k = 0}^\infty 2 p_k \times \frac{C_\eps}{2} \, (1 + m)^\eps + \frac{C_\eps}{2} \, (1 + m)^\eps = C_\eps (1 + m)^\eps .
}
The proof is complete.
\end{proof}

\begin{lemma}
\label{lem:counter:bound}
With the above definitions, for every $\eps > 0$ we have
\formula{
 \lim_{|n| \to \infty} \frac{h(n)}{|n|^\eps} & = 0
}
and
\formula{
 \lim_{|n| \to \infty} \frac{1}{|n|^\eps} \sum_{k = 0}^\infty p_k \bigl(|h(n + x_k)| + |h(n - x_k)|\bigr) & = 0 .
}
\end{lemma}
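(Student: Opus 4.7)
The plan is to deduce both estimates from Lemma~\ref{lem:counter:aux}, which supplies $|a_m| \le 4 C_\delta 2^m (1+m)^\delta$ for every $\delta > 0$, combined with the sparsity of $h$ (its support is $\{0\} \cup \{\pm(m+x_m) : m \ge 0\}$, with $h(\pm(m+x_m)) = a_m$) and the separation property of the sequence $x_m$ from Lemma~\ref{lem:counter:sparse}. For the pointwise bound, if $n \ne 0$ and $h(n) \ne 0$, then $|n| = m + x_m \ge x_m = 2^{2m^2}$ for some $m$, and $m \to \infty$ as $|n| \to \infty$; hence for any $\eps > 0$ and any $\delta > 0$,
\formula{
 \frac{|h(n)|}{|n|^\eps} & \le \frac{4 C_\delta 2^m (1+m)^\delta}{2^{2m^2 \eps}} \to 0 ,
}
since $2 m^2 \eps$ eventually dominates $m$.

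For the weighted sum $S(n) := \sum_{k=0}^\infty p_k(|h(n+x_k)|+|h(n-x_k)|)$ I would enumerate the nonzero contributions. A term $h(n + \sigma x_k)$ with $\sigma \in \{+,-\}$ is nonzero only if either $n + \sigma x_k = 0$ (at most one such $k$, total contribution bounded by $1$), or $|n + \sigma x_k| = j + x_j$ for some $j \ge 0$, with value $a_j$. Lemma~\ref{lem:counter:sparse} then splits the second case into: \emph{(A)}~$j = |n|$, in which case $n + \sigma x_k = \pm(|n| + x_{|n|})$ has only the solution $\sigma = \sign n$, $k = |n|$, because the other sign choice would require $x_k = 2|n| + x_{|n|}$, which lies strictly between $x_{|n|}$ and $x_{|n|+1}$ and is therefore not in the sequence; this contributes $p_{|n|}|a_{|n|}| \le C_\delta (1+|n|)^\delta$. \emph{(B)}~$|n| \ge \tfrac{1}{2} x_j$, i.e.\ $j \le j_\star(n) := \max\{j : x_j \le 2|n|\} = O(\sqrt{\log |n|})$; for each such $j$ there are at most two pairs $(\sigma, k)$ solving $n + \sigma x_k = \pm(j+x_j)$, each contributing at most $p_k |a_j| \le |a_j|/4$.

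Summing case (B) using Lemma~\ref{lem:counter:aux} once more,
\formula{
 \tfrac{1}{2} \sum_{j=0}^{j_\star(n)} |a_j| & \le 2 C_\delta \sum_{j=0}^{j_\star(n)} 2^j (1+j)^\delta \le 4 C_\delta \, 2^{j_\star(n)} (1+j_\star(n))^\delta = 2^{O(\sqrt{\log|n|})} ,
}
which is subpolynomial in $|n|$. Combining with the trivial term and with case (A) yields $S(n) = O((1+|n|)^\delta)$ for every $\delta > 0$, and choosing $\delta < \eps$ gives $S(n) = o(|n|^\eps)$. The real work has already been done in Lemma~\ref{lem:counter:aux}; the only delicate point in the present lemma is the case analysis via Lemma~\ref{lem:counter:sparse}, and in particular the observation that case (B) ranges over only $O(\sqrt{\log|n|})$ indices $j$, which is what prevents the exponentially growing bounds $|a_j|$ from ruining the estimate.
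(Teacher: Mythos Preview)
Your argument is correct. The first assertion is handled exactly as in the paper. For the second assertion, however, your route differs from the paper's.

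The paper does not re-enumerate the nonzero contributions to $S(n)$. Instead it exploits the recursion already set up in the proof of Lemma~\ref{lem:counter:harm}: for $m = |n|$ one has $h(m \pm x_k) = h_m(m \pm x_k)$ for every $k$, and then, since $h_m$ and $h_{m-1}$ differ only at $\pm(m+x_m)$ (where $h_{m-1}$ vanishes), one obtains
\formula{
 \sum_{k=0}^\infty p_k\bigl(|h(m+x_k)|+|h(m-x_k)|\bigr)
 & = p_m|a_m| + \sum_{k=0}^\infty p_k\bigl(|h_{m-1}(m+x_k)|+|h_{m-1}(m-x_k)|\bigr) .
}
Both terms on the right are exactly the quantities bounded by $C_{\eps/2}(1+m)^{\eps/2}$ in Lemma~\ref{lem:counter:aux}, so the whole sum is at most $2C_{\eps/2}(1+m)^{\eps/2}$ and the limit follows immediately. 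This is shorter because Lemma~\ref{lem:counter:aux} already packages the relevant case analysis, and the paper simply reuses it.

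Your direct enumeration via Lemma~\ref{lem:counter:sparse} is a legitimate alternative: it is more explicit and self-contained, at the cost of redoing combinatorics that the paper has already absorbed into Lemma~\ref{lem:counter:aux}. The observation that case~(B) involves only $O(\sqrt{\log|n|})$ indices $j$, so that even the crude bound $p_k|a_j| \le \tfrac14|a_j|$ suffices, is a nice point that the paper's approach sidesteps entirely.
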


\begin{proof}
If $n \ne 0$ and $|n| \ne m + x_m$ for every $m = 0, 1, 2, \ldots$\,, then $h(n) = 0$. If $|n| = m + x_m$ for some $m = 0, 1, 2, \ldots$\,, then $h(n) = a_m$, and, by Lemma~\ref{lem:counter:aux},
\formula{
 \frac{|h(n)|}{|n|^\eps} & = \frac{|a_m|}{(m + x_m)^\eps} \le \frac{C_\eps (1 + m)^\eps}{p_m (m + x_m)^\eps} = \frac{C_\eps 2^{m + 2} (1 + m)^\eps}{(m + 2^{2 m^2})^\eps} \, .
}
The right-hand side clearly converges to zero as $m \to \infty$, and the first part of the lemma follows.

To prove the other one, we consider $m = 0, 1, 2, \ldots$ and we recall that, as in~\eqref{eq:counter:rec1} and~\eqref{eq:counter:rec2}, we have
\formula{
 & \sum_{k = 0}^\infty p_k \bigl(|h(m + x_k)| + |h(m - x_k)|\bigr) \\
 & \qquad = \sum_{k = 0}^\infty p_k \bigl(|h_m(m + x_k)| + |h_m(m - x_k)|\bigr) \\
 & \qquad = p_m |a_m| + \sum_{k = 0}^\infty p_k (|h_{m - 1}(m + x_k)| + |h_{m - 1}(m - x_k)|\bigr) .
}
By Lemma~\ref{lem:counter:aux}, the right-hand side does not exceed $2 C_{\eps/2} (1 + m)^{\eps/2}$, and consequently
\formula{
 \lim_{m \to \infty} \frac{1}{m^\eps} \sum_{k = 0}^\infty p_k \bigl(|h(m + x_k)| + |h(m - x_k)|\bigr) & = 0 ,
}
as desired. In a similar way (or by symmetry),
\formula{
 \lim_{m \to \infty} \frac{1}{m^\eps} \sum_{k = 0}^\infty p_k \bigl(|h(-m + x_k)| + |h(-m - x_k)|\bigr) & = 0 ,
}
and the proof is complete.
\end{proof}

Theorem~\ref{thm:counter:discrete} is an immediate corollary of the above series of lemmas. We prove one additional property of the sequence $h(n)$, which in fact proves Theorem~\ref{thm:counter} without the assumption that $L$ is not concentrated on a proper closed subgroup of $\R$. The corresponding Lévy operator $L$ on $\R$ is given by the same expression as $L_0$:
\formula{
 L f(x) & = \sum_{k = 0}^\infty p_k \bigl(f(x + x_k) + f(x - x_k) - 2 f(x)\bigr) ,
}
but with $x \in \R$; see~\eqref{eq:counter:operator}.

\begin{lemma}
\label{lem:counter:conv}
With the above definitions, the convolution kernel $\check{L}$ of the Lévy operator $L$ is $\schw'$-convolvable with the measure
\formula{
 H & = \sum_{n = -\infty}^\infty h(n) \delta_n ,
}
and
\formula{
 \check{L} \schwconv H & = 0 .
}
\end{lemma}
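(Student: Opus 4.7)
The plan is to exploit the pointwise identity $L_0 h = 0$ from Lemma~\ref{lem:counter:harm} together with the subpolynomial growth bounds from Lemma~\ref{lem:counter:bound}. I would begin by writing the convolution kernel of $L$ explicitly as
\formula{
 \check{L} & = \sum_{k = 0}^\infty p_k \bigl(\delta_{x_k} + \delta_{-x_k} - 2 \delta_0\bigr) ,
}
and noting that since $\sum_k p_k = \tfrac{1}{2}$, this series converges in total variation, so $\check{L}$ is in fact a finite signed Borel measure on $\R$. This strong regularity of $\check{L}$ will reduce most convergence questions to elementary Schwartz-class estimates and associativity of ordinary convolution.

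By the definition of $\schw'$-convolution, to show that $\check{L} \schwconv H$ is well-defined and vanishes it is enough to verify that the classical convolution $(\check{L} * \ph) * (H * \psi)$ exists and equals zero for every $\ph, \psi \in \schw$. Since $\check{L}$ is a finite measure, $\check{L} * \ph = L \ph$ lies in $L^1(\R) \cap L^\infty(\R)$. On the other hand, the subpolynomial estimate $|h(n)| \le C_\eps (1 + |n|)^\eps$ from Lemma~\ref{lem:counter:bound}, combined with the rapid decay of $\psi$, shows that $H * \psi(x) = \sum_n h(n) \psi(x - n)$ converges absolutely and defines a bounded continuous function. Hence $(L \ph) * (H * \psi)$ is well-defined in the classical sense, and a Fubini argument--justified by the same estimates--transforms it into
\formula{
 \sum_n h(n) (L \ph * \psi)(x - n) & = \sum_n h(n) L \omega(x - n) ,
}
where $\omega = \ph * \psi \in \schw$; the identity $L \ph * \psi = L(\ph * \psi)$ is just associativity of convolution with the finite measure $\check{L}$.

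It will then remain to prove that $\sum_n h(n) L \omega(x - n) = 0$ for every Schwartz function $\omega$ and every $x \in \R$. Expanding $L \omega(x - n)$ as a series in $k$, interchanging the order of summation in $n$ and $k$, and reindexing the inner sum in $n$ converts the expression into
\formula{
 \sum_m \omega(x - m) \sum_{k = 0}^\infty p_k \bigl(h(m + x_k) + h(m - x_k) - 2 h(m)\bigr) & = \sum_m \omega(x - m) L_0 h(m) ,
}
which vanishes term by term by Lemma~\ref{lem:counter:harm}. The main technical obstacle will be justifying this interchange of summations, which reduces to absolute convergence of
\formula{
 \sum_m |\omega(x - m)| \sum_{k = 0}^\infty p_k \bigl(|h(m + x_k)| + |h(m - x_k)| + 2 |h(m)|\bigr) .
}
I expect to handle this by combining the rapid decay of the Schwartz function $\omega$ with the subpolynomial bound on the inner sum that is precisely the content of the second statement of Lemma~\ref{lem:counter:bound}.
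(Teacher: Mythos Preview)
Your proposal is correct and follows essentially the same route as the paper: both arguments verify the defining identity $(\check{L}*\ph)*(H*\psi)=0$ by a Fubini-type interchange, reducing everything to the pointwise identity $L_0 h(m)=0$ and using the subpolynomial bound on $\sum_k p_k\bigl(|h(m+x_k)|+|h(m-x_k)|\bigr)$ from Lemma~\ref{lem:counter:bound} to justify absolute convergence. The only cosmetic difference is that you package $\ph*\psi$ into a single $\omega\in\schw$ and exploit that $\check{L}$ is a finite signed measure, whereas the paper keeps $\ph$ and $\psi$ separate throughout the long Fubini chain.
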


\begin{proof}
By the last assertion of Theorem~\ref{thm:counter:discrete}, for a given $\eps > 0$, there is a constant $C_1$ such that
\formula{
 \sum_{k = 0}^\infty p_k \bigl(|h(n + x_k)| + |h(n - x_k)|\bigr) + |h(n)| & \le C_1 (1 + |n|)^\eps .
}
Furthermore, if $\ph, \psi \in \schw$ and $x \in \R$ is fixed, then there is a constant $C_2$ such that
\formula{
 |\ph| * |\psi|(x - n) & \le C_2 (1 + |n|)^{-2 - \eps} .
}
It follows that
\formula{
 \int_{\R} \sum_{n = -\infty}^\infty \biggl(\sum_{k = 0}^\infty p_k \bigl(|h(n + x_k)| + |h(n - x_k)|\bigr) + |\ph(n)|\biggr) |\ph(y - n)| |\psi(x - y)| dy & < \infty .
}
Thus, we may apply the result of Theorem~\ref{thm:counter:discrete} and Fubini's theorem to find that
\formula{
 0 & = \int_{\R} \sum_{n = -\infty}^\infty \biggl(\sum_{k = 0}^\infty p_k \bigl(h(n + x_k) + h(n - x_k)\bigr) - h(n)\biggr) \ph(y - n) \psi(x - y) dy \\
 & = \int_{\R} \sum_{n = -\infty}^\infty \sum_{k = 0}^\infty p_k h(n + x_k) \ph(y - n) \psi(x - y) dy \\
 & \qquad + \int_{\R} \sum_{n = -\infty}^\infty \sum_{k = 0}^\infty p_k h(n - x_k) \ph(y - n) \psi(x - y) dy \\
 & \qquad\qquad - \int_{\R} \sum_{n = -\infty}^\infty h(n) \ph(y - n) \psi(x - y) dy \displaybreak[0]\\
 & = \int_{\R} \sum_{n = -\infty}^\infty \sum_{k = 0}^\infty p_k h(n) \ph(y - n + x_k) \psi(x - y) dy \\
 & \qquad + \int_{\R} \sum_{n = -\infty}^\infty \sum_{k = 0}^\infty p_k h(n) \ph(y - n - x_k) \psi(x - y) dy \\
 & \qquad\qquad - \int_{\R} \sum_{n = -\infty}^\infty h(n) \ph(y - n) \psi(x - y) dy \displaybreak[0]\\
 & = \int_{\R} \sum_{n = -\infty}^\infty \sum_{k = 0}^\infty p_k h(n) \ph(y + x_k) \psi(x - y - n) dy \\
 & \qquad + \int_{\R} \sum_{n = -\infty}^\infty \sum_{k = 0}^\infty p_k h(n) \ph(y - x_k) \psi(x - y - n) dy \\
 & \qquad\qquad - \int_{\R} \sum_{n = -\infty}^\infty h(n) \ph(y) \psi(x - y - n) dy \displaybreak[0]\\
 & = \int_{\R} \biggl(\sum_{k = 0}^\infty p_k \bigl(\ph(y + x_k) + \ph(y + x_k)\bigr) - \ph(y)\biggr) \biggl(\sum_{n = -\infty}^\infty h(n) \psi(x - y - n)\biggr) dy \\
 & = \int_{\R} (\check{L} * \ph)(y) (H * \psi)(x - y) dy \\
 & = (\check{L} * \ph) * (H * \psi)(x) ,
}
as desired.
\end{proof}

We remark that the convolution of $H$ with an arbitrary Schwartz class function is a smooth $L$-harmonic function in the sense of tempered distributions for the Lévy operator $L$ (according to Definition~\ref{def:harmonic:schw}).


\subsection{Multiplication of distributions}
\label{sec:counter:product}

Before dealing with the case of Lévy operators on $\R$ (which are not concentrated on a proper subgroup of $\R$), we state the following counterintuitive result about $\schw'$-product of distributions.

\begin{corollary}
\label{cor:counter:prod}
There is a strictly positive continuous function $f$ on $\R$, and a nonzero tempered distribution $g$ on $\R$, such that $f \schwprod g = 0$.
\end{corollary}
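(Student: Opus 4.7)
The plan is to Fourier-transform the $\schw'$-convolution identity $\check{L} \schwconv H = 0$ of Lemma~\ref{lem:counter:conv}. By the Fourier exchange formula recalled in Section~\ref{sec:pre:distr}, this identity propagates to a $\schw'$-product identity
\[
\fourier \check{L} \schwprod \fourier H = \fourier(\check{L} \schwconv H) = 0 .
\]
The first factor is the continuous function $-\fourier \check{L} = \Psi$, the characteristic exponent of $L$; it is nonnegative by the Lévy--Khintchine formula and bounded by $\sum_k 4 p_k = 2$, since the Lévy measure of $L$ has finite total mass. The second factor $g := \fourier H$ is a nonzero tempered distribution, because $H = \sum_n h(n) \delta_n$ is tempered (the sequence $h(n)$ grows subpolynomially by Theorem~\ref{thm:counter:discrete}) and nonzero ($h(0) = 1$), and the Fourier transform is a bijection on $\schw'$.

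Taking $f_0 := \Psi$ and $g := \fourier H$ already produces a continuous nonnegative function $f_0$ and a nonzero tempered distribution $g$ satisfying $f_0 \schwprod g = 0$. However, $f_0$ vanishes on the lattice $2\pi\Z$, because the Lévy operator $L$ is concentrated on the subgroup $\Z$ of $\R$, so it fails to be strictly positive. To upgrade to strict positivity, I would look for a continuous nonnegative function $\phi$ that is positive on $2\pi\Z$ and satisfies $\phi \schwprod g = 0$; then $f := \Psi + \phi$ is continuous, strictly positive, and satisfies $f \schwprod g = \Psi \schwprod g + \phi \schwprod g = 0$, as required.

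The main obstacle is the construction of such a perturbation $\phi$. It cannot be smooth: for smooth $\phi$ the $\schw'$-product reduces to the ordinary pointwise product, so $\phi \schwprod g = 0$ would force $\phi$ to vanish on the support of $g$, and the origin belongs to this support (as one verifies by pairing $\fourier H$ with Schwartz bumps concentrated near $0$). Hence $\phi$ must be only continuous, and the vanishing of $\phi \schwprod g$ must come from the cancellation built into the limiting definition of the $\schw'$-product rather than from disjointness of supports. Carrying out this delicate construction, tailored to the specific distribution $g = \fourier H$, is the technical heart of the argument.
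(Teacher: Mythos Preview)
Your setup is correct and matches the paper exactly: from Lemma~\ref{lem:counter:conv} and the exchange formula one gets $\Psi \schwprod \fourier H = 0$, with $\Psi$ continuous, nonnegative, and vanishing precisely on $2\pi\Z$. The difficulty you identify --- upgrading to strict positivity --- is the right one, but your proposed resolution goes in the wrong direction.

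You fix $g = \fourier H$ and search for a continuous (necessarily non-smooth) $\phi$ with $\phi \schwprod \fourier H = 0$ and $\phi > 0$ on $2\pi\Z$. You then correctly argue that such a $\phi$ cannot be smooth, and declare the construction of a non-smooth $\phi$ to be ``the technical heart of the argument'' without carrying it out. This is a genuine gap: you have not produced the required object, and there is no indication that such a $\phi$ exists.

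The paper avoids this entirely by modifying $g$ as well as $f$. The key observation, which you do not use, is that the support of $\fourier H$ is \emph{not} contained in $2\pi\Z$ (because $h(n)$ is not a polynomial sequence; a $2\pi$-periodic tempered distribution supported on $2\pi\Z$ has polynomial Fourier coefficients). Hence one may pick $\psi \in \schw$ supported in a small interval around some $\xi_0 \notin 2\pi\Z$ in the support of $\fourier H$, so that $g := \psi \cdot \fourier H$ is nonzero and has support disjoint from $2\pi\Z$. Now a \emph{smooth} correction suffices: choose $\ph \in \schw$ nonnegative, strictly positive on $2\pi\Z$, and vanishing on the support of $\psi$, so that $\ph \cdot \psi = 0$ identically. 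Then $f := \Psi + \ph$ is continuous and strictly positive, and
\[
 f \schwprod g = \Psi \schwprod (\psi \cdot \fourier H) + \ph \cdot (\psi \cdot \fourier H) = \psi \cdot (\Psi \schwprod \fourier H) + (\ph \cdot \psi) \cdot \fourier H = 0 .
\]
The whole point is that by localising $g$ away from the zero set of $\Psi$, the perturbation $\ph$ can be taken in $\schw$, and no delicate non-smooth construction is needed.
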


\begin{proof}
Consider the one-dimensional Lévy operator $L$ defined in~\eqref{eq:counter:operator}, its convolution kernel $\check{L}$, and the corresponding characteristic exponent $\Psi$. Thus,
\formula{
 \Psi(\xi) & = 2 \sum_{k = 0}^\infty p_k \bigl(1 - \cos(x_k \xi)\bigr) = \sum_{k = 0}^\infty 2^{-k - 1} \bigl(1 - \cos(2^{2 k^2} \xi)\bigr)
}
is a Weierstrass-type nowhere differentiable function. Furthermore, let $h(n)$ be the doubly-infinite sequence from Theorem~\ref{thm:counter:discrete}, and let
\formula{
 H & = \sum_{n = -\infty}^\infty h(n) \delta_n .
}
By Lemma~\ref{lem:counter:conv}, $\check{L}$ and $H$ are $\schw'$-convolvable, and $\check{L} \schwconv H = 0$. The desired result essentially follows by the exchange formula: we have
\formula{
 \Psi \schwprod \fourier H & = -\fourier \check{L} \schwprod \fourier H = \fourier (\check{L} \schwconv H) = 0 .
}
Clearly, $\Psi$ is a nonnegative continuous function, and by~\eqref{eq:lk}, $\Psi$ is strictly positive everywhere except on $2 \pi \Z$ (recall that $\check{L}$ is concentrated on $\Z$, and it has an atom at $1$). On the other hand, $\fourier H$ is a periodic tempered distribution with period $2 \pi$, and since $h(n)$ is not a polynomial sequence, the support of $\fourier H$ is not contained in $2 \pi \Z$. Thus, to get the desired result, we only need to correct $\Psi$ and $\check{L}$ so that $\Psi$ is strictly positive everywhere.

One way to do this would be to replace $\check{L}$ by $\check{L} - \delta_0$ and repeat the construction of $h(n)$. However, there is a simpler solution: it is sufficient to define
\formula{
 f & = \Psi + \ph , \\
 g & = \psi \cdot \fourier H ,
}
where $\psi \in \schw$ is chosen in such a way that $\psi = 0$ on $2 \pi \Z$ and $\psi \cdot \fourier H$ is not identically zero, while $\ph \in \schw$ is a nonnegative function such that $\ph > 0$ on $2 \pi \Z$, but $\ph \cdot \psi = 0$ on $\R$. Indeed: $f$ is then a strictly positive continuous function, $g$ is a nonzero tempered distribution, and
\formula{
 f \schwprod g & = \Psi \schwprod (\psi \cdot \fourier H) + \ph \cdot (\psi \cdot \fourier H) \\
 & = \psi \cdot (\Psi \schwprod \fourier H) + (\ph \cdot \psi) \cdot \fourier H = 0 ,
}
as desired.
\end{proof}


\subsection{Continuous case: proof of Theorem~\ref{thm:counter}}
\label{sec:counter:continuous}

The proof of Theorem~\ref{thm:counter} is very similar to the argument used in the discrete case, in the proof of Theorem~\ref{thm:counter:discrete}. Thus, we omit some details and leave them to the interested reader.

We consider a Lévy operator similar to the one given in~\eqref{eq:counter:operator}, but with an additional one-dimensional Laplace operator. That is, we consider
\formula{
 L f(x) & = f''(x) + \sum_{k = 0}^\infty p_k \bigl(f(x + x_k) + f(x - x_k) - 2 f(x)\bigr) \\
 & = f''(x) + \sum_{k = 0}^\infty p_k \bigl(f(x + x_k) + f(x - x_k)\bigr) - f(x) ,
}
where again $p_k = 2^{-k - 2}$, and $x_k$ is a rapidly increasing sequence to be specified later. The construction of an $L$-harmonic function $h(x)$ is very similar to the construction of the sequence $h(n)$ in Section~\ref{sec:counter:discrete}, but for each $n \in \Z$ we replace the single number $h(n)$ by an appropriate compactly supported function $h(x)$, $x \in (n - \tfrac{1}{2}, n + \tfrac{1}{2})$. Additionally, we specify the value of $x_k$ on the fly, but in any case we will have
\formula[eq:counter:x]{
 x_0 & = 1, && x_{k + 1} \ge 2 x_k \text{ and } x_k \ge 4 k \text{ for } k = 0, 1, 2, \ldots \, ,
}
so that, in particular, Lemma~\ref{lem:counter:sparse} applies.

We consider a nonzero smooth even function $h_{-1}$ with support contained in $(-\tfrac{1}{2}, \tfrac{1}{2})$. Then, in step $m = 0, 1, 2, \ldots$\,, we define $h_m$ by appropriately modifying $h_{m - 1}$ on the intervals $|x| \in (m + x_m - \tfrac{1}{2}, m + x_m + \tfrac{1}{2})$ in such a way that $h_m$ is smooth, even, and $L h_m(x) = 0$ when $|x| \in (m - \tfrac{1}{2}, m + \tfrac{1}{2})$.

Let us describe more precisely step $m = 0, 1, 2, \ldots$ of the construction. Suppose that $h_{m - 1}$ and $x_0, x_1, \ldots, x_{m - 1}$ have already been defined. For $z \in (-\tfrac{1}{2}, \tfrac{1}{2})$ we define
\formula{
 h_m(x) & = h_{m - 1}(x) + \ph_m(x - m - x_m) + \ph_m(-x - m - x_m) \\
 & = h_{-1}(x) + \sum_{j = 0}^m \bigl(\ph_j(x - j - x_j) + \ph_j(-x - j - x_j)\bigr) ,
}
where $x_m$ is specified below,
\formula{
 \ph_m(z) & = -\frac{L h_{m - 1}(z + m)}{p_m} = -\frac{L h_{m - 1}(-z - m)}{p_m} \\
 & = h_{m - 1}''(z + m) + \sum_{k = 0}^{m - 1} p_k \bigl(h_{m - 1}(z + m + x_k) + h_{m - 1}(z + m - x_k)\bigr) - h_{m - 1}(z + m)
}
if $m = 1, 2, \ldots$\,, and
\formula{
 \ph_0(z) & = -\frac{L h_{-1}(z)}{2 p_0} = -2 h_{-1}''(z) + 2 h_{-1}(z)
}
if $m = 0$. For convenience, we set $\ph_m(z) = 0$ when $|z| \ge \tfrac{1}{2}$. Then $\ph_m$ is a smooth function with compact support in $(-\tfrac{1}{2}, \tfrac{1}{2})$. Note that in the above calculation of $L h_{m - 1}(z + m)$ we truncated the series at $k = m - 1$. This is because, as we now prove, all terms corresponding to $k \ge m$ are zero. Indeed: by construction, we have $h_{m - 1}(x) = 0$ when $|x| \ge (m - 1) + x_{m - 1} + \tfrac{1}{2}$, and if $k \ge m$, then, by~\eqref{eq:counter:x},
\formula{
 |m \pm x_k| & \ge x_k - m \ge x_m - m \ge \tfrac{1}{2} x_m + x_{m - 1} - m \ge m + x_{m - 1} .
}
Thus, $h_{m - 1}(z + m \pm x_k) = 0$, as claimed. In other words, the values of $x_k$ for $k \ge m$ are not needed in order to evaluate $\ph_m(z) = -p_m^{-1} L h_{m - 1}(z + m)$, as long as condition~\eqref{eq:counter:x} is satisfied, and this allows us to specify the value of $x_m$ only in step $m$. If $m > 0$, then we choose $x_m$ to be an integer large enough, so that $x_m \ge 2 x_{m - 1}$, $x_m \ge 4 m$, $x_m \ge 2^{2 m^2}$, and
\formula[eq:counter:xk]{
 \log x_m & \ge \sup \{|\ph_m(z)| + |\ph_m''(z)| : z \in (-\tfrac{1}{2}, \tfrac{1}{2})\} .
}
If $m = 0$, we simply let $x_0 = 1$.

We now define
\formula{
 h(x) & = \lim_{m \to \infty} h_m(x) \\
 & = h_{-1}(x) + \sum_{m = 0}^\infty \bigl(\ph_m(x - m - x_m) + \ph_m(-x - m - x_m)\bigr) .
}
We stress that $h$ is a smooth function with a very sparse support; namely, we have
\formula{
 h(x) & = \begin{cases}
  \ph_0(x) & \text{if $x \in (-\tfrac{1}{2}, \tfrac{1}{2})$,} \\
  \ph_m(x - m - x_m) & \text{if $x \in (m + x_m - \tfrac{1}{2}, m + x_m + \tfrac{1}{2})$ with $m = 0, 1, \ldots$\,,} \\
  \ph_m(-x - m - x_m) & \text{if $x \in (-m - x_m - \tfrac{1}{2}, -m - x_m + \tfrac{1}{2})$ with $m = 0, 1, \ldots$\,,} \\
  0 & \text{otherwise} .
 \end{cases}
}
We now follow closely the arguments used in the discrete case in Sections~\ref{sec:counter:discrete} and~\ref{sec:counter:product}.

\begin{lemma}
\label{lem:counter:harm:smooth}
With the above definitions, we have
\formula{
 L h(x) & = 0
}
for every $x \in \R$.
\end{lemma}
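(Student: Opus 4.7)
The argument parallels the proof of Lemma~\ref{lem:counter:harm} in the discrete case. Fix $x \in \R$ and choose a nonnegative integer $m$ with $|x| \in [m - \tfrac{1}{2}, m + \tfrac{1}{2}]$. The plan is to show first that $L h(x) = L h_m(x)$, by exploiting the sparseness of the corrections in later stages of the construction, and then to invoke the defining property of $\varphi_m$ to conclude $L h_m(x) = 0$.

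To justify the identity $L h(x) = L h_m(x)$, observe that $h - h_m = \sum_{m' > m} [\varphi_{m'}(\,\cdot\, - m' - x_{m'}) + \varphi_{m'}(-\,\cdot\, - m' - x_{m'})]$, and each summand is supported on the two intervals of length~$1$ centred at $\pm(m' + x_{m'})$. The point $x$ itself lies outside all these intervals because $|x| \le m + \tfrac{1}{2} < (m+1) + x_{m+1} - \tfrac{1}{2}$ by virtue of $x_{m+1} \ge 4(m+1)$. For the translates $x \pm x_k$, $k = 0, 1, 2, \ldots$, we need a continuous analogue of Lemma~\ref{lem:counter:sparse}: if $\bigl||x \pm x_k| - (j + x_j)\bigr| < \tfrac{1}{2}$ for some $j > m$, then splitting into the cases $k < j$, $k = j$, $k > j$ and using $x_{k+1} \ge 2 x_k$ together with $x_k \ge 4k$ forces either $\bigl||x| - j\bigr| < \tfrac{1}{2}$ or $|x| \ge x_j/2 - \tfrac{1}{2}$. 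Since $j \ge m + 1$ and $x_j \ge 2 x_m \ge 8m$, both alternatives contradict $|x| \le m + \tfrac{1}{2}$. Thus $h$ and $h_m$ (together with their second derivatives where relevant) agree at $x$ and at each $x \pm x_k$, and summing termwise gives $L h(x) = L h_m(x)$; absolute convergence of the underlying series is automatic because $h_m$ has compact support.

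The identity $L h_m(x) = 0$ on the open slot $|x| \in (m - \tfrac{1}{2}, m + \tfrac{1}{2})$ is precisely what the definition of $\varphi_m$ encodes. For $y = z + m$ with $z \in (-\tfrac{1}{2}, \tfrac{1}{2})$ and $m \ge 1$, the only term in the series for $L h_m(z + m)$ affected by the newly added bumps is the $k = m$ term, where $h_m(z + m + x_m) = \varphi_m(z)$ and $h_m(z + m - x_m) = 0$ (by another sparseness check using $x_m \ge 4m$); the contribution $p_m \varphi_m(z)$ exactly cancels $L h_{m-1}(z + m)$ by the defining relation $\varphi_m(z) = -L h_{m-1}(z + m)/p_m$. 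The case $m = 0$ is analogous, with the factor $2 p_0$ in the definition of $\varphi_0$ reflecting that both bumps $\varphi_0(\,\cdot\, - 1)$ and $\varphi_0(-\,\cdot\, - 1)$ contribute via evenness. By symmetry $L h_m(-y) = 0$ as well, and continuity in $x$ of both sides extends the equality to the closed slot, covering the boundary case where $|x|$ is a half-integer. Combined with the previous paragraph, this yields $L h(x) = 0$.

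The main technical point is the continuous version of the sparseness lemma used in the first step: the thickening of each support island to width~$1$ is precisely the reason the construction prescribes the slightly stronger lower bound $x_k \ge 4k$ (in addition to geometric growth $x_{k+1} \ge 2 x_k$), and the freedom to choose $x_m$ on the fly in~\eqref{eq:counter:xk} is what guarantees that the sparse-hit property can be maintained throughout the inductive construction without disturbing the bookkeeping at earlier stages.
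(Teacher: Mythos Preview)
Your argument is correct and follows essentially the same route as the paper's proof: reduce $Lh(x)$ to $Lh_m(x)$ via the sparseness of later corrections (the continuous analogue of Lemma~\ref{lem:counter:sparse}), then use the defining relation for $\ph_m$ to conclude $Lh_m = 0$ on the relevant slot. The only cosmetic difference is at half-integer points: the paper observes directly that $h(n+\tfrac12)=h''(n+\tfrac12)=0$ for every $n\in\Z$ (since each $\ph_j$ is supported in $(-\tfrac12,\tfrac12)$), so $Lh(n+\tfrac12)=0$ trivially, whereas you close the gap by continuity; both work.
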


\begin{proof}
The argument is almost exactly the same as in the proof of Lemma~\ref{lem:counter:harm}, except that we replace $n$ by $n + z$, with $z \in (-\frac{1}{2}, \frac{1}{2})$, and we need to consider $z = \tfrac{1}{2}$ separately.

By construction, $h(n + \tfrac{1}{2}) = h''(n + \tfrac{1}{2}) = 0$ for every $n \in \Z$. Thus, $L h(n + \tfrac{1}{2}) = 0$, and we only need to show that $L h(z + n) = 0$ when $n \in \Z$ and $z \in (-\tfrac{1}{2}, \tfrac{1}{2})$. By Lemma~\ref{lem:counter:sparse}, for every $k = 0, 1, 2, \ldots$ we have
\formula{
 |n \pm x_k| & \ne m + x_m && \text{if $m > |n|$,}
}
and thus $h(z + n \pm x_k) = h_m(z + n \pm x_k)$, where $m = |n|$. Therefore,
\formula{
 L h(z + n) & = h''(z + n) + \sum_{k = 0}^\infty p_k \bigl(h(z + n + x_k) + h(z + n - x_k)\bigr) - h(z + n) \\
 & = h_m''(z + n) + \sum_{k = 0}^\infty p_k \bigl(h_m(z + n + x_k) + h_m(z + n - x_k)\bigr) - h_m(z + n) \\
 & = L h_m(z + n) .
}
We claim that $L h_m(z + n) = 0$. If $n = m = 0$, then
\formula{
 L h_0(z) & = h_0''(0) + p_0 \bigl(h_0(z + x_0) + h_0(z - x_0)\bigr) - h_0(z) \\
 & = h_{-1}''(z) + 2 p_0 \ph_0(z) - h_{-1}(z) = 0
}
by the definitions of $h_0$ and $\ph_0$. Suppose now that $n = m > 0$. By the definition of $h_m$, we have
\formula{
 L h_m(z + m) & = L h_{m - 1}(z + m) + L \ph_m(z - x_m) + L \ph_m(z + 2 m + x_m) .
}
Note that $|z - x_m| > \tfrac{1}{2}$, $|z - x_m \pm x_k| > \tfrac{1}{2}$ if $k \ne m$, and also $|z - 2 x_m| > \tfrac{1}{2}$. Thus,
\formula{
 L \ph_m(z - x_m) & = \ph_m''(z - x_m) - \ph_m(z - x_m) \\
 & \qquad + \sum_{k = 0}^\infty p_k \bigl(\ph_m(z - x_m + x_k) + \ph_m(z - x_m - x_k)\bigr) \\
 & = p_m \ph_m(z) = -L h_{m - 1}(m + z) .
}
Similarly, $|z + 2 m + x_m| > \tfrac{1}{2}$ and $|z + 2 m + x_m \pm x_k| > \tfrac{1}{2}$ for every $k$, and hence
\formula{
 L \ph_m(z + 2 m + x_m) & = \ph_m''(z + 2 m + x_m) - \ph_m(z + 2 m + x_m) \\
 & \qquad + \sum_{k = 0}^\infty p_k \bigl(\ph_m(z + 2 m + x_m + x_k) + \ph_m(z + 2 m + x_m - x_k)\bigr) = 0 .
}
Therefore,
\formula{
 L h_m(z + m) & = L h_{m - 1}(z + m) - L h_{m - 1}(z + m) = 0 .
}
By a similar argument (or by symmetry), we also have $L h_m(-z - m) = 0$, and our claim is thus proved.
\end{proof}

\begin{lemma}
\label{lem:counter:aux:smooth}
If $x_k$ grows sufficiently fast, then, with the above definitions, for every $\eps > 0$ there is a constant $C_\eps$ such that
\formula*[eq:counter:claim:smooth]{
 p_m |\ph_m(z)| & \le |h_{m - 1}''(z + m)| + \sum_{k = 0}^\infty p_k \bigl(|h_{m - 1}(z + m + x_k)| + |h_{m - 1}(x - x_k)|\bigr) \\
 & \qquad + |h_{m - 1}(z + m)| \le C_\eps (1 + m)^\eps
}
for $m = 0, 1, 2, \ldots$ and $z \in (-\tfrac{1}{2}, \tfrac{1}{2})$.
\end{lemma}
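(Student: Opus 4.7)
The first inequality in~\eqref{eq:counter:claim:smooth} is immediate from $\ph_m(z) = -p_m^{-1} L h_{m-1}(z+m)$ and the triangle inequality, so the substance of the lemma lies in the upper bound on the middle expression, which I denote by $M_m$. My plan is to establish the stronger estimate $M_m = O(\log m)$, which then gives $M_m \le C_\eps (1+m)^\eps$ for any $\eps > 0$ once $C_\eps$ is chosen large enough. Unlike the discrete case of Lemma~\ref{lem:counter:aux}, no induction on $m$ is required here, because the adaptive condition~\eqref{eq:counter:xk} can be invoked directly: $x_m$ is chosen \emph{after} $\ph_m$ is already determined, so the pointwise bound $|\ph_m(w)| + |\ph_m''(w)| \le \log x_m$ holds by construction, with $\ph_0$ being a fixed function bounded by a constant.

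The first ingredient I would record is the sparse support of $h_{m-1}$. By construction, $h_{m-1}$ is supported in the disjoint union of $(-\tfrac{1}{2}, \tfrac{1}{2})$ with unit-length intervals centred at the integers $\pm(j + x_j)$, $j = 0, 1, \ldots, m-1$, and on each such interval it coincides with a translate (possibly composed with a reflection) of the single bump $\ph_j$. Since $z \in (-\tfrac{1}{2}, \tfrac{1}{2})$ while $m$, $x_k$, and every $x_j$ are integers, for $n \in \Z$ the value $h_{m-1}(z + n)$ can be nonzero only when $n = \pm(j + x_j)$ for some $j < m$, in which case $|h_{m-1}(z+n)| = |\ph_j(\pm z)|$ and $|h_{m-1}''(z+n)| = |\ph_j''(\pm z)|$; the lone contribution from $h_{-1}$ at $n = 0$ is relevant only when $m = 0$ and is bounded by a constant depending only on $h_{-1}$.

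Next I would use Lemma~\ref{lem:counter:sparse} to control which summands of $M_m$ survive. The term $|h_{m-1}(z + m + c)|$ with $c \in \{0, x_k, -x_k\}$ is nonzero only when $|m + c| = j + x_j$ for some $j < m$. Applying Lemma~\ref{lem:counter:sparse} with $n = m$ (and the indices on the right played by $j$, $x_j$), the fact that $j \ne m$ forces $m \ge \tfrac{1}{2} x_j$, hence $\log x_j \le \log(2m)$. Combined with~\eqref{eq:counter:xk}, which yields $|\ph_j(\pm z)| + |\ph_j''(\pm z)| \le \log x_j$, every surviving summand of $M_m$ is bounded by $\log(2m)$. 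Since $j \mapsto j + x_j$ is strictly increasing, each $k$ produces at most one nonzero $j$ per sign of $c$, and hence
\begin{equation*}
 M_m \ \le \ 2 \log(2m) + 2 \log(2m) \sum_{k = 0}^\infty p_k \ = \ 3 \log(2m)
\end{equation*}
for every $m \ge 1$; the case $m = 0$ is absorbed into $C_\eps$ by a direct estimate involving $h_{-1}$ alone.

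The main technical subtlety --- and the reason for the hypothesis ``if $x_k$ grows sufficiently fast'' in the statement --- is the appearance of $\ph_m''$ in the recursive formula $\ph_m = -p_m^{-1} L h_{m-1}(\cdot + m)$: a naive iteration would threaten to make second derivatives blow up geometrically across the construction. Condition~\eqref{eq:counter:xk} cleanly disposes of this: we let $\ph_m$ and $\ph_m''$ turn out to be whatever they are, and then choose $x_m$ so astronomically large that $\log x_m$ nonetheless bounds their suprema. Via Lemma~\ref{lem:counter:sparse}, this pointwise bound gets converted into the harmless $\log(2m)$ estimate above, so there is no genuine obstacle once the construction is phrased in this adaptive way.
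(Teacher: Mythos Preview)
Your argument follows the paper's proof: invoke~\eqref{eq:counter:xk} to bound each $|\ph_j|+|\ph_j''|$ by $\log x_j$, then use Lemma~\ref{lem:counter:sparse} to turn $x_j$ into $2m$. You even sharpen the outcome to $M_m=O(\log m)$, whereas the paper stops at $C_\eps(1+m)^\eps$ via the cruder step $\log x_j\le C_\eps(1+x_j)^\eps$.

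Two small slips to fix. First, the contribution from $h_{-1}$ at the origin is not relevant only when $m=0$: it also appears in $|h_{m-1}(z+m-x_k)|$ whenever $m=x_k$ for some $k$, since then $z+m-x_k=z\in(-\tfrac12,\tfrac12)$. This adds at most $p_k\sup|h_{-1}|\le\tfrac14\sup|h_{-1}|$ to $M_m$, so the bound survives after adjusting the constant. Second, your invocation of Lemma~\ref{lem:counter:sparse} for the $c=0$ term is not literally applicable (there is no $x_k$ in $|m+0|=j+x_j$); but in that case $m=j+x_j$ gives $x_j<m$ directly, so the conclusion $\log x_j\le\log(2m)$ still holds.
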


\begin{proof}
The proof is actually simpler than the proof of Lemma~\ref{lem:counter:aux}, due to flexibility in the choice of $x_m$. The first inequality in~\eqref{eq:counter:claim:smooth} follows by the definition of $\ph_m$, and so we are left with the proof of the other inequality.

Let $\eps > 0$. By~\eqref{eq:counter:xk}, there is a constant $C_\eps$ such that
\formula[eq:counter:aux:xk]{
 \sup \{|\ph_m(z)| + |\ph_m''(z)| : z \in (-\tfrac{1}{2}, \tfrac{1}{2})\} & \le C_\eps (1 + x_m)^\eps
}
for $m = 0, 1, 2, \ldots$\,

Fix $m = 0, 1, 2, \ldots$\, If $h_{m - 1}(z + m) \ne 0$ for some $z \in (-\tfrac{1}{2}, \tfrac{1}{2})$, then $m = j + x_j$ and $h_{m - 1}(z + m) = \ph_j(z)$ for some $j < m$. By~\eqref{eq:counter:aux:xk}, we obtain
\formula{
 |h_{m - 1}(z + m)| + |h_{m - 1}''(z + m)| & = |\ph_j(z)| + |\ph_j''(z)| \le C_\eps (1 + x_j)^\eps \le C_\eps (1 + m)^\eps .
}

Similarly, if $h_{m - 1}(z + m \pm x_k) \ne 0$, then $|m \pm x_k| = j + x_j$ and $h_{m - 1}(z + m \pm x_k) = \ph_j(\pm z)$ for some $j < m$, so that again
\formula{
 |h_{m - 1}(z + m \pm x_k)| & = |\ph_j(\pm z)| \le C_\eps (1 + x_j)^\eps .
}
By Lemma~\ref{lem:counter:sparse} we find that $m \ge \tfrac{1}{2} x_j$, and hence
\formula{
 |h_{m - 1}(z + m \pm x_k)| & \le C_\eps (1 + 2 m)^\eps \le 2^\eps C_\eps (1 + m)^\eps .
}
It follows that
\formula{
 & |h_{m - 1}''(z + m)| + \sum_{k = 0}^\infty p_k \bigl(|h_{m - 1}(z + m + x_k)| + |h_{m - 1}(z + m - x_k)|\bigr) + |h_{m - 1}(z + m)| \\
 & \qquad \le C_\eps (1 + m)^\eps + 2^\eps C_\eps (1 + m)^\eps \le 2^{1 + \eps} C_\eps (1 + m)^\eps .
}
The proof is complete.
\end{proof}

\begin{lemma}
\label{lem:counter:bound:smooth}
With the above definitions, for every $\eps > 0$ we have
\formula{
 \lim_{|x| \to \infty} \frac{|h(x)| + |h''(x)|}{|x|^\eps} & = 0 ,
}
and
\formula{
 \lim_{|x| \to \infty} \frac{1}{|x|^\eps} \sum_{k = 0}^\infty p_k \bigl(|h(x + x_k)| + |h(x - x_k)|\bigr) & = 0 .
}
\end{lemma}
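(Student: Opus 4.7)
The plan is to follow closely the discrete argument of Lemma~\ref{lem:counter:bound}, adapting it to the smooth case by tracking both $h$ and $h''$, and by using the pointwise bound~\eqref{eq:counter:xk} on $\varphi_m$ in place of the explicit recursion on $a_m$. The core observation is that, like the sequence in Section~\ref{sec:counter:discrete}, the function $h$ has a very sparse support: by construction $h(x) = 0$ and $h''(x) = 0$ outside the union of $(-\tfrac{1}{2}, \tfrac{1}{2})$ and the intervals $\pm(m + x_m - \tfrac{1}{2}, m + x_m + \tfrac{1}{2})$ for $m = 0, 1, 2, \ldots$\,, and on the $m$-th pair of intervals we have $h(x) = \varphi_m(\pm x - m - x_m)$ by evenness.

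For the first limit, it is therefore enough to consider $|x| \in (m + x_m - \tfrac{1}{2}, m + x_m + \tfrac{1}{2})$ with $m \to \infty$. On such an $x$, by~\eqref{eq:counter:xk} we have $|h(x)| + |h''(x)| \le \log x_m$, while $|x| \ge x_m - \tfrac{1}{2}$ and $x_m \ge 2^{2 m^2}$. Hence
\formula{
 \frac{|h(x)| + |h''(x)|}{|x|^\eps} & \le \frac{\log x_m}{(x_m - \tfrac{1}{2})^\eps} \xrightarrow[m \to \infty]{} 0
}
for every $\eps > 0$, which gives the first assertion.

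For the second limit, write $x = n + z$ with $n \in \Z$ and $z \in [-\tfrac{1}{2}, \tfrac{1}{2}]$; the boundary case $|z| = \tfrac{1}{2}$ is trivial since the sum vanishes there, so assume $|z| < \tfrac{1}{2}$ and set $m = |n|$. Arguing as in the proof of Lemma~\ref{lem:counter:harm:smooth} and using Lemma~\ref{lem:counter:sparse}, I would first show that $h(n + z \pm x_k) = h_m(n + z \pm x_k)$ for every $k$. Unwinding $h_m = h_{m - 1} + \varphi_m(\cdot - m - x_m) + \varphi_m(-\cdot - m - x_m)$ and using the lacunarity conditions $x_{k + 1} \ge 2 x_k$ and $x_k \ge 4 k$ to rule out all but the single contribution arising from $k = m$, I expect
\formula{
 \sum_{k = 0}^\infty p_k \bigl(|h(x + x_k)| + |h(x - x_k)|\bigr) & \le p_m |\varphi_m(\pm z)| + \sum_{k = 0}^\infty p_k \bigl(|h_{m - 1}(n + z + x_k)| + |h_{m - 1}(n + z - x_k)|\bigr) .
}
By Lemma~\ref{lem:counter:aux:smooth} applied with $\eps$ replaced by $\eps/2$ (and, in the case $n < 0$, combined with the evenness of $h_{m - 1}$ so that the right-hand side reduces to the situation of that lemma), both terms on the right are bounded by $C_{\eps/2}(1 + m)^{\eps/2}$. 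Since $|x|$ is comparable to $m$ as $|n| \to \infty$, dividing by $|x|^\eps$ yields the claim.

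The main technical obstacle is verifying that, at the points $n + z \pm x_k$ with $|n| = m$ and $|z| < \tfrac{1}{2}$, only the single term with $j = k = m$ from the expansion $h = h_{m - 1} + \sum_{j \ge m} (\varphi_j(\cdot - j - x_j) + \varphi_j(-\cdot - j - x_j))$ contributes a nonzero value. This amounts to a careful case analysis of the integer equalities $n \pm x_k = \pm(j + x_j)$, which are tightly constrained by the fast growth of the sequence $(x_k)$, exactly as in Lemma~\ref{lem:counter:sparse}. Everything else is essentially a transcription of the discrete argument.
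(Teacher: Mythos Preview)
Your proposal is correct and follows essentially the same approach as the paper: bounding $|h|+|h''|$ on the $m$-th support interval via~\eqref{eq:counter:xk} for the first limit, and for the second limit reducing $h$ to $h_m$ via Lemma~\ref{lem:counter:sparse}, then splitting off the single $\varphi_m$ contribution and applying Lemma~\ref{lem:counter:aux:smooth} with $\eps/2$. The paper carries this out only for $x = z + m$ with $m \ge 0$ and invokes symmetry for negative $x$, which is exactly your evenness reduction.
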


\begin{proof}
The proof is very similar to the proof of Lemma~\ref{lem:counter:bound}. By definition, if $n \in \Z$, $z \in (-\tfrac{1}{2}, \tfrac{1}{2})$, $|n| \ge 2$ and $h(z + n) \ne 0$ or $h''(z + n) \ne 0$, then $|n| = m + x_m$ for some $m = 1, 2, \ldots$\,, and thus, by~\eqref{eq:counter:xk},
\formula{
 \frac{|h(z + n)| + |h''(z + n)|}{|z + n|^\eps} & = \frac{|\ph_m(z)| + |\ph_m''(z)|}{|z + n|^\eps} \le \frac{\log x_m}{|n|^\eps} \le \frac{\log x_m}{x_m^\eps} \, .
}
The right-hand side clearly converges to zero as $m \to \infty$, and the first part of the lemma follows.

To prove the other one, we consider $m = 0, 1, 2, \ldots$ and $z \in (-\tfrac{1}{2}, \tfrac{1}{2})$, and we recall that, as in the proof of Lemma~\ref{lem:counter:harm:smooth}, we have
\formula{
 & \sum_{k = 0}^\infty p_k \bigl(|h(z + m + x_k)| + |h(z + m - x_k)|\bigr) \\
 & \qquad = \sum_{k = 0}^\infty p_k \bigl(|h_m(z + m + x_k)| + |h_m(z + m - x_k)|\bigr) \\
 & \qquad = p_m |\ph_m(z)| + \sum_{k = 0}^\infty p_k (|h_{m - 1}(z + m + x_k)| + |h_{m - 1}(z + m - x_k)|\bigr) .
}
By Lemma~\ref{lem:counter:aux:smooth}, the right-hand side does not exceed $C_{\eps/2} (1 + m)^{\eps/2}$, and thus
\formula{
 \lim_{m \to \infty} \frac{1}{m^\eps} \sum_{k = 0}^\infty p_k \bigl(|h(z + m + x_k)| + |h(z + m - x_k)|\bigr) & = 0 ,
}
uniformly with respect to $z \in (-\tfrac{1}{2}, \tfrac{1}{2})$. By a similar argument (or by symmetry),
\formula{
 \lim_{m \to \infty} \frac{1}{m^\eps} \sum_{k = 0}^\infty p_k \bigl(|h(-z - m + x_k)| + |h(-z - m - x_k)|\bigr) & = 0
}
uniformly with respect to $z \in (-\tfrac{1}{2}, \tfrac{1}{2})$, and the proof is complete.
\end{proof}

\begin{lemma}
\label{lem:counter:conv:smooth}
With the above definitions, the function $h$ corresponds to a tempered distribution, which is $\schw'$-convolvable with the convolution kernel $\check{L}$ of the Lévy operator $L$, and we have $\check{L} \schwconv h = 0$.
\end{lemma}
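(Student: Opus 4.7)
The plan is to adapt the computation from the proof of Lemma~\ref{lem:counter:conv} to the continuous setting. The key additional observation is that the Lévy measure $\nu = \sum_{k=0}^\infty p_k (\delta_{x_k} + \delta_{-x_k})$ is a \emph{finite} measure, since $\sum_k p_k = \tfrac{1}{2}$, so that $\check{L} = \delta_0'' + \nu - \delta_0$. By Lemma~\ref{lem:counter:bound:smooth}, the quantities $|h(x)|$, $|h''(x)|$ and $\nu * |h|(x) = \sum_k p_k (|h(x+x_k)| + |h(x-x_k)|)$ are each bounded by $C_\eps(1+|x|)^\eps$ for every $\eps > 0$; in particular, $h$ defines a tempered distribution. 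This sub-polynomial bound on $\nu*|h|$ is the crucial input that drives the entire argument.

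Fix $\ph, \psi \in \schw$ and set $\chi = \ph * \psi \in \schw$. I will show that, as functions,
\formula{
 (\check{L} * \ph) * (h * \psi)(x) & = L(h * \chi)(x) .
}
Writing $\check{L} * \ph = \ph'' + \nu * \ph - \ph$, the contributions of $\ph''$ and $\ph$ yield $(h*\chi)''(x)$ and $(h*\chi)(x)$ by standard convolution identities. For the middle contribution, expanding $\nu * \ph = \sum_k p_k(\ph(\cdot-x_k) + \ph(\cdot+x_k))$ and interchanging sum and integral gives $\sum_k p_k((h*\chi)(x-x_k) + (h*\chi)(x+x_k)) = \nu*(h*\chi)(x)$. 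The interchange is justified by absolute convergence, because
\formula{
 \nu * |h*\chi|(x) & \le \int_{\R} (\nu * |h|)(x-z) |\chi(z)| dz \le C_\eps' (1+|x|)^\eps ,
}
thanks to the sub-polynomial bound on $\nu * |h|$ and the Schwartz decay of $\chi$.

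It remains to verify that $L(h*\chi)(x) = 0$. Using $(h*\chi)''(x) = \int_{\R} h''(y) \chi(x-y) dy$ (valid by sub-polynomial growth of $h''$ and Schwartz decay of $\chi$) and the Fubini identity $\nu*(h*\chi)(x) = \int_{\R} (\nu * h)(y) \chi(x-y) dy$ (justified exactly as above), one obtains
\formula{
 L(h*\chi)(x) & = \int_{\R} \bigl(h''(y) + (\nu*h)(y) - h(y)\bigr) \chi(x-y) dy = \int_{\R} L h(y) \chi(x-y) dy = 0
}
by Lemma~\ref{lem:counter:harm:smooth}. Hence $(\check{L}*\ph)*(h*\psi) \equiv 0$ for every $\ph, \psi \in \schw$, which precisely says that $\check{L} \schwconv h$ is well-defined and equal to zero. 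The only genuine difficulty is tracking the absolute convergence needed for each Fubini step, and in every case this reduces to the single decay estimate on $\nu * |h|$ furnished by Lemma~\ref{lem:counter:bound:smooth}, so the rest is routine bookkeeping.
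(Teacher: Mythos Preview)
Your proof is correct and follows essentially the same route as the paper's: both establish the absolute convergence needed for Fubini (and integration by parts for the $h''$ term) using the sub-polynomial bounds from Lemma~\ref{lem:counter:bound:smooth}, then rearrange the convolutions to reduce to $Lh = 0$. Your two-step organisation --- first identifying the intermediate quantity $L(h*\chi)$ with $\chi=\ph*\psi$, then pushing $L$ onto $h$ --- is just a cleaner packaging of the single long chain of equalities the paper writes out explicitly.
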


\begin{proof}
The argument is virtually the same as in the proof of Lemma~\ref{lem:counter:conv}. By Lemma~\ref{lem:counter:bound:smooth}, for a given $\eps > 0$, there is a constant $C_1$ such that
\formula{
 |h''(z)| + \sum_{k = 0}^\infty p_k \bigl(|h(z + x_k)| + |h(z - x_k)|\bigr) + |h(z)| & \le C_1 (1 + |z|)^\eps .
}
Furthermore, if $\ph, \psi \in \schw$ and $x \in \R$ is fixed, then there is a constant $C_2$ such that
\formula{
 |\ph| * |\psi|(x - z) & \le C_2 (1 + |z|)^{-2 - \eps} .
}
It follows that
\formula{
 \int_{\R} \int_{\R} \biggl(|h''(z)| + \sum_{k = 0}^\infty p_k \bigl(|h(z + x_k)| + |h(z - x_k)|\bigr) - h(z)\biggr) |\ph(y - z)| |\psi(x - y)| dz dy & < \infty .
}
Thus, we may apply Lemma~\ref{lem:counter:harm:smooth}, Fubini's theorem and integration by parts to find that
\formula{
 0 & = \int_{\R} \int_{\R} \biggl(h''(z) + \sum_{k = 0}^\infty p_k \bigl(h(z + x_k) + h(z - x_k)\bigr) - h(z)\biggr) \ph(y - z) \psi(x - y) dz dy \\
 & = \int_{\R} \int_{\R} h''(z) \ph(y - z) \psi(x - y) dz dy \displaybreak[0]\\
 & \qquad + \int_{\R} \int_{\R} \sum_{k = 0}^\infty p_k h(z + x_k) \ph(y - z) \psi(x - y) dz dy \\
 & \qquad\qquad + \int_{\R} \int_{\R} \sum_{k = 0}^\infty p_k h(z - x_k) \ph(y - z) \psi(x - y) dz dy \\
 & \qquad\qquad\qquad - \int_{\R} \int_{\R} h(z) \ph(y - z) \psi(x - y) dz dy \displaybreak[0]\\
 & = \int_{\R} \int_{\R} h(z) \ph''(y - z) \psi(x - y) dz dy \displaybreak[0]\\
 & \qquad + \int_{\R} \int_{\R} \sum_{k = 0}^\infty p_k h(z) \ph(y - z + x_k) \psi(x - y) dz dy \\
 & \qquad\qquad + \int_{\R} \int_{\R} \sum_{k = 0}^\infty p_k h(z) \ph(y - z - x_k) \psi(x - y) dz dy \\
 & \qquad\qquad\qquad - \int_{\R} \int_{\R} h(z) \ph(y - z) \psi(x - y) dz dy \displaybreak[0]\\
 & = \int_{\R} \int_{\R} h(z) \ph''(y) \psi(x - y - z) dz dy \displaybreak[0]\\
 & \qquad + \int_{\R} \int_{\R} \sum_{k = 0}^\infty p_k h(z) \ph(y + x_k) \psi(x - y - z) dz dy \\
 & \qquad\qquad + \int_{\R} \int_{\R} \sum_{k = 0}^\infty p_k h(z) \ph(y - x_k) \psi(x - y - z) dz dy \\
 & \qquad\qquad\qquad - \int_{\R} \int_{\R} h(z) \ph(y) \psi(x - y - z) dz dy \displaybreak[0]\\
 & = \int_{\R} \biggl(\ph''(y) + \sum_{k = 0}^\infty p_k \bigl(\ph(y + x_k) + \ph(y + x_k)\bigr) - \ph(y)\biggr) \biggl(\int_{\R} h(z) \psi(x - y - z) dz\biggr) dy \\
 & = \int_{\R} (\check{L} * \ph)(y) (h * \psi)(x - y) dy \\
 & = (\check{L} * \ph) * (h * \psi)(x) ,
}
and the proof is complete.
\end{proof}

Theorem~\ref{thm:counter} follows directly from the above series of lemmas.

%
%

\section{Signed harmonic functions}
\label{sec:signed}

In this final section of the article we prove various variants of Liouville's theorem for signed polynomially bounded functions using Fourier transform approach. We begin with an abstract result in Theorem~\ref{thm:liouville}, and then, by choosing an appropriate Wiener-type algebra $W$, we obtain specific Liouville's theorems as corollaries.


\subsection{General result}
\label{sec:signed:general}

Recall that according to Definition~\ref{def:algebra}, an algebra $W$ of continuous functions on $\R^d$ is a \emph{Wiener-type algebra} if every $\Phi \in W$ corresponds to a tempered distribution, $\ph \cdot \Phi \in W$ whenever $\ph \in \schw$ and $\Phi \in W$, and the following variant of Wiener's $1/f$ theorem holds:
\formula{
 \begin{array}{c}\text{if $K \subseteq \R^d$ is a compact set, $\Phi \in W$ and $\Phi(\xi) \ne 0$ for every $\xi \in K$,} \\ \text{then there is $\tilde{\Phi} \in W$ such that $\Phi(\xi) \tilde{\Phi}(\xi) = 1$ for every $\xi \in K$.}\end{array}
}
A tempered distribution $\Psi$ is said to belong to $W$ locally on an open set $U$ if for every compact set $K \subseteq U$ there is a tempered distribution $\Phi \in W$ such that $\Psi = \Phi$ in a neighbourhood of $K$. In particular, in this case the restriction of $\Psi$ to $U$ is given by a continuous function, and if $\ph \in \schw$ has a compact support contained in $U$, then $\ph \cdot \Psi$ is an element of $W$.

Using the notation introduced in Section~\ref{sec:pre:distr}, Definition~\ref{def:act} reads as follows. A tempered distribution $H$ is said to \emph{act} on $W$ if for every $\Phi, \Psi \in W$ we have:
\formula{
 (H \schwprod \Phi) \schwprod \Psi & = H \schwprod (\Phi \cdot \Psi) .
}
The main result of this section is the following extension of Theorem~\ref{thm:liouville:0}. Note that we do not require $\check{L}$ to be the convolution kernel of a Lévy operator.

\begin{theorem}[Liouville's theorem factory]
\label{thm:liouville}
Let $W$ be a Wiener-type algebra of continuous functions on $\R^d$, and let $\check{L}$ be a tempered distribution. Suppose that $\fourier \check{L}$ belongs to $W$ locally on an open set $U$, and assume that $\fourier \check{L}(\xi) \ne 0$ for every $\xi \in U$. Let $h$ be a tempered distribution such that $\fourier h$ acts on $W$, and such that $\check{L} \schwconv h = 0$. Then the spectrum of $h$ is contained in $\R^d \setminus U$. In other words: the restriction of $\fourier h$ to $U$ is zero.

In particular, if $U = \R^d \setminus \{0\}$, then $h$ is a polynomial.
\end{theorem}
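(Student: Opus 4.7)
The plan is the four-step scheme (I)--(IV) from Section~\ref{sec:intro:signed}; steps~(I), (II) and~(IV) are short, and step~(III) carries all the content.

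Step~(I) is the hypothesis $\check{L} \schwconv h = 0$. Step~(II) is the exchange formula of Hörmander recalled at the end of Section~\ref{sec:pre:distr}: since $\check{L}$ and $h$ are $\schw'$-convolvable, the multiplicative product $\fourier\check{L} \schwprod \fourier h$ is well-defined and equals $\fourier(\check{L} \schwconv h) = 0$.

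For step~(III), fix $\varphi \in \schw$ with compact support $K \subseteq U$. Choose a compact set $K'$ with $K \subseteq \operatorname{int}(K') \subseteq K' \subseteq U$, and, using the local membership of $\fourier\check{L}$ in $W$, pick $\Phi \in W$ that coincides with $\fourier\check{L}$ on some open neighbourhood $V$ of $K'$. Since $\Phi = \fourier\check{L}$ does not vanish on $K' \subseteq U$, condition~(c) of Definition~\ref{def:algebra} yields $\tilde{\Phi} \in W$ with $\Phi \tilde{\Phi} = 1$ on $K'$, and hence on $\operatorname{int}(K')$ by continuity. Locality of $\schw'$-multiplication together with step~(II) give $\fourier h \schwprod \Phi = \fourier h \schwprod \fourier\check{L} = 0$ on $V$; applying locality once more, $(\fourier h \schwprod \Phi) \schwprod \tilde{\Phi} = 0$ on $V$. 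Since $\fourier h$ acts on $W$, this last expression equals $\fourier h \schwprod (\Phi \cdot \tilde{\Phi})$, which, using $\Phi \cdot \tilde{\Phi} = 1$ on $\operatorname{int}(K')$ together with locality, agrees with $\fourier h$ on $\operatorname{int}(K')$. Hence $\langle \fourier h, \varphi \rangle = 0$; as $\varphi$ ranges over all Schwartz functions with compact support in $U$, we conclude $\supp(\fourier h) \subseteq \R^d \setminus U$. Step~(IV), under the assumption $U = \R^d \setminus \{0\}$, reduces to the classical fact that every tempered distribution supported at the origin is a finite linear combination of partial derivatives of $\delta_0$, whose inverse Fourier transform is a polynomial.

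The only nontrivial step is~(III). The hypothesis that $\fourier h$ acts on $W$ is precisely the form of associativity that lets us implement \emph{division by $\fourier\check{L}$} inside $W$, replacing the naive formal computation $\fourier h = \fourier h \cdot (\fourier\check L \cdot \fourier\check L^{-1}) = (\fourier h \cdot \fourier\check L) \cdot \fourier\check L^{-1} = 0$ by one valid within $\schw'$. The small but essential technical device is the enlargement from $K$ to $K'$: it upgrades the pointwise identity $\Phi \tilde{\Phi} = 1$ on a compact set to an identity on an open set containing $K$, which is what the locality principle for $\schw'$-multiplication requires in order to pass from the product $\fourier h \schwprod (\Phi \cdot \tilde{\Phi})$ back to $\fourier h$ itself.
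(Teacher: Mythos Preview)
Your argument is correct and follows the same four-step scheme as the paper. The only tactical difference lies in how step~(III) produces an element of $W$ and how the vanishing of $\fourier h \schwprod \Phi$ is obtained. The paper chooses a Schwartz function $\varphi$ whose Fourier transform has compact support in $U$, sets $\Phi = \fourier\varphi \cdot \fourier\check{L} \in W$ (via condition~\ref{it:al:2} of Definition~\ref{def:algebra}), and derives the \emph{global} identity $\fourier h \schwprod \Phi = 0$ from the convolution relation $(\check{L} \schwconv h) * \varphi = 0$ and the exchange formula. You instead take $\Phi$ directly from the definition of local membership and obtain $\fourier h \schwprod \Phi = 0$ only \emph{locally} on $V$, via the locality of $\schw'$-multiplication. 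Both routes are valid; yours stays entirely on the Fourier side and leans more heavily on locality, while the paper's detour through convolution yields a global identity at once and avoids one of the locality applications. Your enlargement $K \subseteq \operatorname{int}(K') \subseteq K'$ plays exactly the role that the paper's ``interior of $K$'' plays at the end of its argument.
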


\begin{proof}
Fix a compact subset $K$ of $U$. Suppose that the spectrum of $\ph \in \schw$ is a compact subset of $U$ and $\fourier \ph(\xi) \ne 0$ for every $\xi \in K$. Define $f = -\check{L} * \ph$, $\Psi = -\fourier \check{L}$, $\Phi = \fourier f$ and $H = \fourier h$. Observe that
\formula{
 \Phi & = \fourier f = -\fourier(\check{L} * \ph) = -\fourier \ph \cdot \fourier \check{L} = \fourier \ph \cdot \Psi .
}
By assumption, $\fourier \ph \in \schw$, the support of $\fourier \ph$ is a compact subset of $U$, and $\Psi$ belongs to $W$ locally on $U$. Thus, $\Phi \in W$. Furthermore,
\formula{
 0 = (\check{L} \schwconv h) * \ph & = h \schwconv (\check{L} * \ph) = -h \schwconv f ,
}
and hence, by the Fourier exchange formula,
\formula{
 0 & = \fourier(h \schwconv f) = H \schwprod \Phi .
}
On the other hand, $\Phi(\xi) = \fourier \ph(\xi) \Psi(\xi) \ne 0$ for every $\xi \in K$, and therefore there is $\tilde{\Phi} \in W$ such that $\Phi(\xi) \tilde{\Phi}(\xi) = 1$ for every $\xi \in K$. Since $H$ acts on $W$, we conclude that
\formula{
 0 & = (H \schwprod \Phi) \schwprod \tilde{\Phi} = H \schwprod (\Phi \cdot \tilde{\Phi}) .
}
Recall that $\Phi(\xi) \tilde{\Phi}(\xi) = 1$ for every $\xi \in K$. By definition, multiplication of distributions is a local operation. Hence, in the interior of $K$, we have
\formula{
 H & = H \schwprod 1 = H \schwprod (\Phi \cdot \tilde{\Phi}) = 0 .
}
Since $K$ is an arbitrary subset of $U$, we conclude that $H = 0$ on $U$, as desired.

If $U = \R^d \setminus \{0\}$, then $H = \fourier h$ is supported in $\R^d \setminus U = \{0\}$, and hence $h$ is necessarily a polynomial (see Sections~2.6 and~6.3.2 in~\cite{vladimirov}).
\end{proof}


\subsection{Operators with smooth symbols}
\label{sec:signed:smooth}

It is straightforward to verify that $W = \schw$, the Schwartz class of rapidly decaying functions, is a Wiener-type algebra. Clearly, $\Psi$ belongs to $W$ locally on $U$ if and only if $\Psi$ is smooth on $U$. Finally, if $\Phi, \Psi \in W$ and $H$ is an arbitrary tempered distribution, we have $(\Phi \cdot \Psi) \cdot H = \Phi \cdot (\Psi \cdot H)$, and so every tempered distribution acts on $W$. This leads to the following statement, which is a minor extension of Theorem~3.2 in~\cite{bs}.

\begin{corollary}[Liouville's theorem for operators with smooth symbols]
\label{cor:liouville:smooth}
Let $\check{L}$ be a tempered distribution. Suppose that on an open set $U$, $\fourier \check{L}$ corresponds to a smooth function without zeroes. Let $h$ be a tempered distribution such that $\check{L} \schwconv h = 0$. Then the spectrum of $h$ is contained in $\R^d \setminus U$.

In particular, if $U = \R^d \setminus \{0\}$, then $h$ is a polynomial.
\end{corollary}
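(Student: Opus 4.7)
The plan is to apply the abstract Theorem~\ref{thm:liouville} with the specific choice $W = \schw$, the Schwartz class of rapidly decreasing smooth functions. So the first step is to verify that $\schw$ is a Wiener-type algebra in the sense of Definition~\ref{def:algebra}. Properties~\ref{it:al:1} and~\ref{it:al:2} are immediate: Schwartz functions are tempered distributions, and the product of two Schwartz functions is again Schwartz. For~\ref{it:al:3}, given a compact $K \subseteq \R^d$ and $\Phi \in \schw$ nowhere vanishing on $K$, I would fix an open neighbourhood $V$ of $K$ whose closure $\overline{V}$ is still compact and contained in $\{\Phi \ne 0\}$, then choose $\chi \in \test$ with $\chi = 1$ on a neighbourhood of $K$ and $\supp \chi \subseteq V$, and finally set $\tilde\Phi(\xi) = \chi(\xi)/\Phi(\xi)$, extended by zero outside $\supp \chi$. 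Then $\tilde\Phi \in \test \subset \schw$ and $\Phi \tilde\Phi = 1$ on $K$, as required.

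Next, I would verify the local membership hypothesis: on the given open set $U$, the Fourier symbol $\fourier \check{L}$ is by hypothesis represented by a smooth, nowhere vanishing function. So for any compact $K \subseteq U$ I can choose $\chi \in \test$ with $\chi = 1$ on a neighbourhood of $K$ and $\supp \chi \subseteq U$, and the product $\chi \cdot \fourier \check{L}$ (interpreting $\fourier \check{L}$ as the smooth function on $U$) is in $\test \subset \schw$ and agrees with $\fourier \check{L}$ on a neighbourhood of $K$. Hence $\fourier \check{L}$ belongs to $\schw$ locally on $U$.

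Then I would check that \emph{every} tempered distribution $H$ acts on $\schw$ in the sense of Definition~\ref{def:act}. For $\Phi, \Psi \in \schw$, the products $H \cdot \Phi$, $(H \cdot \Phi) \cdot \Psi$, $\Phi \cdot \Psi$, and $H \cdot (\Phi \cdot \Psi)$ are all defined via the elementary recipe $\langle \psi \cdot f, \ph\rangle = \langle f, \ph\psi\rangle$ for $\psi \in \schw$ and $f \in \schw'$. The associativity identity $(H \cdot \Phi) \cdot \Psi = H \cdot (\Phi \cdot \Psi)$ then reduces, when tested against $\ph \in \schw$, to the trivial identity $\langle H, \ph \Psi \Phi\rangle = \langle H, \ph (\Phi\Psi)\rangle$, which holds pointwise. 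Thus $\fourier h$ automatically acts on $\schw$, with no further assumptions on $h$.

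With these three pieces in hand I simply invoke Theorem~\ref{thm:liouville}: the restriction of $\fourier h$ to $U$ vanishes, that is, the spectrum of $h$ is contained in $\R^d \setminus U$. The final assertion for $U = \R^d \setminus \{0\}$ is already part of the conclusion of Theorem~\ref{thm:liouville} (or, alternatively, follows from the classical characterisation of tempered distributions supported at a single point as finite linear combinations of derivatives of $\delta_0$, whose inverse Fourier transforms are polynomials). I do not anticipate any serious obstacle; the only point requiring a moment's care is producing the inverse $\tilde\Phi$ in the $1/f$ condition, and that is handled cleanly by a bump-function construction as above.
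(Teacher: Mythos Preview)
Your proposal is correct and follows exactly the approach the paper takes: choose $W = \schw$, verify it is a Wiener-type algebra (with the $1/f$ condition handled by the bump-function construction you describe), note that smoothness of $\fourier\check{L}$ on $U$ is precisely local membership in $\schw$, and observe that every tempered distribution acts on $\schw$ by the elementary associativity $(H\cdot\Phi)\cdot\Psi = H\cdot(\Phi\Psi)$. The paper's own justification in Section~\ref{sec:signed:smooth} is just a terse version of what you wrote.
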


\begin{example}
Let $\alpha \in (0, 2)$ and let $L = -(-\Delta)^{\alpha/2}$ be the fractional Laplace operator. If $\check{L}$ denotes the corresponding convolution kernel, then $\fourier \check{L}(\xi) = -|\xi|^\alpha$. Since $\fourier \check{L}$ is smooth in $\R^d \setminus \{0\}$, we find that the only polynomially bounded $L$-harmonic functions (in the sense of tempered distributions) are polynomials.

However, $L h$ is not well-defined if $h$ is a polynomial of degree $\lceil \alpha \rceil$ or higher, and so all $L$-harmonic functions are constant when $\alpha \in (0, 1]$, and all $L$-harmonic functions are affine when $\alpha \in (1, 2)$.

This is exactly the main result of~\cite{fall} (Theorem~1.1 therein) and of~\cite{cdl} (Theorem~1.3 therein).
\end{example}

\begin{remark}
\label{rem:lizorkin}
Suppose that $L = -(-\Delta)^{\alpha/2}$ is the fractional Laplace operator, or, more generally, that $\fourier \check{L}$ is a smooth function on $\R^d \setminus \{0\}$. As it was kindly pointed out by the anonymous referee, in this case $L f$ can be defined \emph{up to a polynomial} for an arbitrary tempered distribution $f$ in the following way.

Let $\schw_0$ denote the space of Schwartz class functions which are orthogonal in $L^2(\R^d)$ to all polynomials, the so-called \emph{Lizorkin space} (often denoted by $\Phi$ in literature). The corresponding dual space $\schw_0'$ is the class of tempered distributions, where any two distributions differing by a polynomial are identified. Then $L$ is a well-defined linear operator on $\schw_0'$. If $h \in \schw'$ is a tempered distribution, then we say that $h$ is $L$-harmonic up to a polynomial if the equivalence class $h_0 \in \schw_0'$ of $h$ satisfies $L h_0 = 0$ in $\schw_0'$. Clearly, this condition is equivalent to Definition~\ref{def:harmonic:schw} with $\schw$ replaced by $\schw_0$.

We claim that our Theorem~\ref{thm:liouville} can be adapted to include the above setting. Suppose that a tempered distribution $h \in \schw'$ is $L$-harmonic up to a polynomial. Choose an arbitrary $\psi \in \schw$ such that $\fourier \psi$ is equal to zero in a neighbourhood of $0$. Then $h * \psi$ is $L$-harmonic in the sense of tempered distributions, and so, by Corollary~\ref{cor:liouville:smooth}, $h * \psi$ is a polynomial. Equivalently, $\fourier (h * \psi) = \fourier h \cdot \fourier \psi$ is a tempered distribution supported in $\{0\}$. Since $\fourier \psi$ is an arbitrary Schwartz class function vanishing in a neighbourhood of $0$, we conclude that $\fourier h$ is a tempered distribution supported in $\{0\}$, and hence $h$ is a polynomial.

The Lizorkin space was introduced in 1960s in the study of spaces of fractional smoothness, and it became a standard tool in the theory of hypersingular integrals; we refer to \emph{Bibliographical notes to Chapter~2} in~\cite{samko} for a detailed discussion. The above variant of Liouville's theorem for the fractional Laplace operator is due to Dipierro, Savin and Valdinoci; their result in~\cite{dsv} (Theorem~1.5 therein) uses a different, more PDE-oriented definition of functions $L$-harmonic up to a polynomial, but it can be shown to be equivalent to the one given here.
\end{remark}


\subsection{Bounded $L$-harmonic functions}
\label{sec:signed:bounded}

Let $W$ be the Wiener algebra: the class of Fourier transforms of integrable functions. It is straightforward to see that $W$ satisfies the first two conditions of Definition~\ref{def:algebra}, and the last one is a variant of the classical Wiener's $1/f$ theorem. We refer to the Division Lemma in Section~3 in~\cite{cs} for the proof in dimension one, and to Section~\ref{sec:signed:oper} below for a more general statement.

If $\check{L}$ is an integrable distribution, $r > 0$ and $\ph \in \schw$ satisfies $\fourier \ph(\xi) = 1$ when $|\xi| < r$, then $\check{L} * \ph$ is an integrable function with Fourier transform $\fourier \ph \cdot \fourier \check{L}$ in $W$. Since $r$ is arbitrary, we see that $\fourier \check{L}$ belongs to $W$ locally on $\R^d$. In particular, Fourier symbols of Lévy operators belong to $W$ locally on $\R^d$.

Observe that if $h$ is a bounded distribution and $\Phi, \Psi \in W$, then there are integrable functions $f, g$ such that $\Phi = \fourier f$ and $\Psi = \fourier g$. Since $h \schwconv (f * g) = h \schwconv (f \schwconv g) = (h \schwconv f) \schwconv g$, by the exchange formula we find that $\fourier h \schwprod (\Phi \cdot \Psi) = (\fourier h \schwprod \Phi) \schwprod \Psi$. Thus, Fourier transforms of bounded distributions act on $W$.

The above observations immediately lead to the following minor extension of the general Liouville's theorem for bounded $L$-harmonic functions given in Theorem~1.1 in~\cite{adej} and in Theorem~4.4 in~\cite{bs}.

\begin{corollary}[Liouville's theorem for bounded functions]
\label{cor:liouville:bounded}
Let $\check{L}$ be an integrable distribution. Suppose that $\fourier \check{L}$, which is necessarily a continuous function, has no zeroes in an open set $U$. Let $h$ be a bounded distribution such that $\check{L} \schwconv h = 0$. Then the spectrum of $h$ is contained in $\R^d \setminus U$.

In particular, if $U = \R^d \setminus \{0\}$, then $h$ is a constant.
\end{corollary}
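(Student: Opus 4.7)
The proof should be a direct application of Theorem~\ref{thm:liouville} with $W$ chosen to be the classical Wiener algebra, that is, the space of Fourier transforms of integrable functions on $\R^d$. In fact, most of the groundwork has already been laid in the paragraphs immediately preceding the statement of the corollary, so the proof mainly consists of recording how the pieces fit together.

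First I would verify that $W$ is a Wiener-type algebra in the sense of Definition~\ref{def:algebra}. Condition~\ref{it:al:1} is immediate, since every $L^1$ function defines a tempered distribution. Condition~\ref{it:al:2} follows because, if $\Phi = \fourier f$ with $f \in L^1(\R^d)$ and $\ph \in \schw$, then $\ph \cdot \Phi = \fourier((\fourier^{-1} \ph) * f)$ and $(\fourier^{-1} \ph) * f$ is integrable. Condition~\ref{it:al:3} is precisely the classical Wiener $1/f$ theorem, already cited in the text preceding the corollary (and generalised in Section~\ref{sec:signed:oper}).

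Next, I would show $\fourier \check{L}$ belongs to $W$ locally on $\R^d$, hence on $U$. Given a compact $K \subseteq \R^d$, choose $\ph \in \schw$ with $\fourier \ph \equiv 1$ on a neighbourhood of $K$; then $\check{L} * \ph \in L^1(\R^d)$ since $\check{L}$ is an integrable distribution, and its Fourier transform $\fourier \ph \cdot \fourier \check{L}$ lies in $W$ and agrees with $\fourier \check{L}$ near $K$. To see that $\fourier h$ acts on $W$ in the sense of Definition~\ref{def:act}, take $\Phi = \fourier f$ and $\Psi = \fourier g$ with $f, g \in L^1(\R^d)$. Since $h$ is a bounded distribution and $f, g$ are integrable, the $\schw'$-convolutions in the identity $(h \schwconv f) \schwconv g = h \schwconv (f \schwconv g) = h \schwconv (f * g)$ are all well-defined, and the equality follows from the associativity discussed in Section~\ref{sec:pre:distr}. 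Taking Fourier transforms via the exchange formula gives $(\fourier h \schwprod \Phi) \schwprod \Psi = \fourier h \schwprod (\Phi \cdot \Psi)$, as required.

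With these three ingredients Theorem~\ref{thm:liouville} applies directly, yielding that the spectrum of $h$ is contained in $\R^d \setminus U$. In the special case $U = \R^d \setminus \{0\}$, Theorem~\ref{thm:liouville} gives that $h$ is a polynomial; since $h$ is bounded, $h$ must be constant. The only nontrivial external input is the Wiener $1/f$ theorem, which is a classical result already invoked earlier in the paper, so I do not anticipate any substantive obstacle beyond assembling the setup.
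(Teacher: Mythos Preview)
Your proposal is correct and follows essentially the same route as the paper: the paragraphs preceding the corollary already verify that the Wiener algebra is a Wiener-type algebra, that $\fourier\check{L}$ belongs to it locally, and that Fourier transforms of bounded distributions act on it, so the corollary is obtained exactly as you describe by invoking Theorem~\ref{thm:liouville}. Your final remark that a bounded polynomial must be constant is the one small step the paper leaves implicit in passing from ``polynomial'' to ``constant''.
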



\subsection{Operators with finite generalised moments}
\label{sec:signed:oper}

We define the following class of Wiener-type algebras, parameterised by auxiliary functions $\profile$. Examples of admissible functions $\profile$ are discussed at the end of this section. Here we remark that a typical choice is $\profile(x) \approx |x|^\alpha$, where $\alpha > 0$ is a parameter, and that by considering $\profile(x) = 1$ we recover the particular case of bounded solutions, discussed in Section~\ref{sec:signed:bounded}.

\begin{definition}
\label{def:oper}
Suppose that $\profile$ is a nonnegative function on $\R^d$ with the following properties:
\begin{enumerate}[label=(\alph*)]
\item\label{it:oper:1} we have $1 + \profile(x + y) \le (1 + \profile(x)) (1 + \profile(y))$ for every $x, y \in \R^d$;
\item\label{it:oper:2} for some positive constants $c, q$ we have $\profile(x) \le c (1 + |x|)^q$.
\end{enumerate}
Define $W_\profile^\infty$ to be the class of Fourier transforms of integrable functions $f$ such that also $\profile f$ is integrable.
\end{definition}

Our main goal in this section is to prove that $W_\profile^\infty$ is a Wiener-type algebra according to Definition~\ref{def:algebra}.

\begin{lemma}
\label{lem:oper:1}
The set $W_\profile^\infty$ defined in Definition~\ref{def:oper} is an algebra of continuous functions which satisfies conditions~\ref{it:al:1} and~\ref{it:al:2} of Definition~\ref{def:algebra}.
\end{lemma}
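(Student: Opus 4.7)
The plan is to verify each of the four required properties in turn, with the submultiplicativity hypothesis~\ref{it:oper:1} doing essentially all of the work. Throughout, given $\Phi = \fourier f \in W_\profile^\infty$, write $\|f\|_\profile := \int_{\R^d} (1 + \profile(x)) |f(x)| dx$, which is finite by assumption.

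First I would observe that every $\Phi \in W_\profile^\infty$ is a bounded uniformly continuous function on $\R^d$ (as the Fourier transform of an integrable function), and in particular a tempered distribution. This gives continuity and condition~\ref{it:al:1} of Definition~\ref{def:algebra}. Closure of $W_\profile^\infty$ under addition and scalar multiplication is immediate, since the defining integrability conditions are preserved.

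Next I would verify closure under pointwise multiplication. If $\Phi_j = \fourier f_j$ for $j = 1, 2$ with $f_j, \profile f_j$ integrable, then $\Phi_1 \cdot \Phi_2 = \fourier(f_1 * f_2)$, and $f_1 * f_2$ is integrable by Young's inequality. To see that $\profile \cdot (f_1 * f_2)$ is integrable, the key step is the submultiplicative inequality: for every $x, y \in \R^d$,
\formula{
 1 + \profile(x) & = 1 + \profile((x - y) + y) \le (1 + \profile(x - y))(1 + \profile(y)) .
}
Applying this inside the absolute value of the convolution and then using Fubini's theorem gives
\formula{
 \int_{\R^d} (1 + \profile(x)) |f_1 * f_2(x)| dx & \le \|f_1\|_\profile \cdot \|f_2\|_\profile ,
}
which is finite. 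Hence $W_\profile^\infty$ is an algebra.

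Finally, for condition~\ref{it:al:2}, I would argue similarly. Given $\Phi = \fourier f \in W_\profile^\infty$ and $\ph \in \schw$, set $g = \fourier^{-1} \ph \in \schw$; then $\ph \cdot \Phi = \fourier(g * f)$. The same submultiplicative estimate as above yields
\formula{
 \int_{\R^d} (1 + \profile(x)) |g * f(x)| dx & \le \|g\|_\profile \cdot \|f\|_\profile ,
}
and $\|g\|_\profile$ is finite because condition~\ref{it:oper:2} of Definition~\ref{def:oper} bounds $\profile$ by a polynomial, while $g \in \schw$ decays faster than any polynomial. Hence $\ph \cdot \Phi \in W_\profile^\infty$. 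No step is really an obstacle here: the submultiplicativity hypothesis is tailored precisely to make the convolution estimate close, and the polynomial growth hypothesis handles the Schwartz-class factor.
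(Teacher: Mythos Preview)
Your proposal is correct and follows essentially the same approach as the paper: continuity and condition~\ref{it:al:1} come from the Riemann--Lebesgue lemma, the algebra property from the submultiplicative estimate on $(1+\profile)$ applied to the convolution, and condition~\ref{it:al:2} from the observation that $\fourier^{-1}\ph \in \schw$ has finite $\|\cdot\|_\profile$-norm thanks to the polynomial growth bound on $\profile$. The only cosmetic difference is that the paper states $\schw \subseteq W_\profile^\infty$ as a separate intermediate step before invoking the algebra property, whereas you fold this directly into the verification of~\ref{it:al:2}.
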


\begin{proof}
Clearly, $W_\profile^\infty$ is a linear space. If $\Phi \in W_\profile^\infty$, then $\Phi$ is the Fourier transform of an integrable function $f$, and hence $\Phi$ is a bounded continuous function. Thus, $W_\profile^\infty$ is a class of continuous functions, and every $\Phi \in W_\profile^\infty$ corresponds to a tempered distribution.

We claim that $W_\profile^\infty$ is indeed an algebra of functions. If $\Phi, \Psi \in W_\profile^\infty$, then $\Phi = \fourier f$ and $\Psi = \fourier g$ for some integrable functions $f, g$ such that also $\profile f, \profile g$ are integrable. It follows that $h = f * g$ is an integrable function, and by condition~\ref{it:oper:1} in Definition~\ref{def:oper},
\formula*[eq:oper:conv]{
 \int_{\R^d} |(1 + \profile(x)) h(x)| dx & = \int_{\R^d} \biggl| (1 + \profile(x)) \int_{\R^d} f(y) g(x - y) dy \biggr| dx \\
 & \le \int_{\R^d} \int_{\R^d} (1 + \profile(x)) |f(y)| g(x - y)| dy dx \\
 & = \int_{\R^d} \int_{\R^d} (1 + \profile(y + z)) |f(y)| |g(z)| dy dz \\
 & \le \int_{\R^d} (1 + \profile(y)) |f(y)| dy \cdot \int_{\R^d} (1 + \profile(z)) |g(z)| dz < \infty ,
}
that is, also $\profile h$ is integrable. Since $\Phi \cdot \Psi = \fourier f \cdot \fourier g = \fourier (f * g) = \fourier h$, we conclude that $\Phi \cdot \Psi \in W_\profile^\infty$, as desired.

We now show that $W_\profile^\infty$ contains $\schw$. Indeed: if $\ph \in \schw$ and $\psi = \fourier^{-1} \ph$, then $\psi \in \schw$, and so, in particular, $\psi$ is integrable. Furthermore, by condition~\ref{it:oper:2} in Definition~\ref{def:oper}, $(1 + |x|)^{d + 1} \profile(x) \psi(x)$ is a bounded function of $x \in \R^d$, and so in particular $\profile \psi$ is integrable. Thus, $\ph = \fourier \psi$ indeed belongs to $W_\profile^\infty$.

It remains to observe that if $\ph \in \schw$ and $\Phi \in W_\profile^\infty$, then $\ph \in W_\profile^\infty$, and therefore $\ph \cdot \Phi \in W_\profile^\infty$.
\end{proof}

In order to prove that $W_\profile^\infty$ is a Wiener-type algebra, we only need to verify that $W_\profile^\infty$ satisfies condition~\ref{it:al:3} of Definition~\ref{def:algebra}, that is, a variant of Wiener's $1/f$ theorem holds in $W_\profile^\infty$. While this result is known (see~\cite{bss} for further discussion and references), we provide a complete proof in order to prepare the reader for a similar, but slightly more involved argument in the next section, in the proof of Lemma~\ref{lem:funct:2}.

\begin{lemma}
\label{lem:oper:2}
Let $W_\profile^\infty$ be the set defined in Definition~\ref{def:oper}. Suppose that $K \subseteq \R^d$ is a compact set, $\Phi \in W_\profile^\infty$ and $\Phi(\xi) \ne 0$ for every $\xi \in K$. Then there is $\tilde{\Phi} \in W_\profile^\infty$ such that $\Phi(\xi) \tilde{\Phi}(\xi) = 1$ for every $\xi \in K$.
\end{lemma}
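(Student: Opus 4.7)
The plan is to recognise $W_\profile^\infty$ as the Fourier image of a weighted $L^1$-space (a Beurling algebra) and then invoke the local version of Wiener's $1/f$ theorem in that setting. Set $\omega = 1 + \profile$ and let $A_\omega := L^1(\R^d, \omega\, dx)$, normed by $\|f\|_\omega := \int |f| \omega\, dx$. The submultiplicativity assumption~\ref{it:oper:1} together with the computation in~\eqref{eq:oper:conv} shows that $A_\omega$ is a commutative Banach algebra under convolution; since the Fourier transform is injective on $L^1$ and maps convolution to pointwise product, it provides an isometric algebra isomorphism from $A_\omega$ onto $W_\profile^\infty$ equipped with the transported norm.

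Next I would pass to the unitisation $A_\omega^+ := A_\omega \oplus \C\delta_0$, a commutative Banach algebra with identity, whose Fourier image is $\C + W_\profile^\infty$. The polynomial bound~\ref{it:oper:2} gives $\log \omega(nx) = O(\log n)$ for every fixed $x$, so Beurling's admissibility condition $\sum_{n \ge 1} n^{-2} \log \omega(nx) < \infty$ holds. By classical results on Beurling algebras (see, e.g., the references in~\cite{bss}), this condition guarantees that the Gelfand spectrum of $A_\omega^+$ is naturally identified with $\R^d \cup \{\infty\}$ via the Fourier transform, that $A_\omega^+$ is a regular Banach algebra, and, most importantly, that the \emph{local} Wiener $1/f$ theorem is valid: whenever $a \in A_\omega^+$ satisfies $\hat a(\xi) \ne 0$ for every $\xi$ in a compact $K \subseteq \R^d$, there exists $b \in A_\omega^+$ with $\hat a \cdot \hat b = 1$ on $K$.

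The final step will be a small cleanup. Applying the local Wiener theorem to the element $a \in A_\omega$ with $\hat a = \Phi$, I obtain $b = c\delta_0 + g$ in $A_\omega^+$ (with $c \in \C$ and $g \in A_\omega$) such that $\Phi \cdot (c + \hat g) = 1$ on $K$. The function $c + \hat g$ belongs to $\C + W_\profile^\infty$ rather than to $W_\profile^\infty$ itself, but this is easy to remedy: pick $\rho \in C_c^\infty(\R^d) \subset \schw \subset W_\profile^\infty$ with $\rho \equiv 1$ on a neighbourhood of $K$, and set $\tilde\Phi := c\rho + \hat g \in W_\profile^\infty$. Then $\tilde\Phi = c + \hat g$ on $K$, so $\Phi \tilde\Phi = 1$ on $K$, as required.

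The main obstacle is the local Wiener $1/f$ theorem invoked above: it is a nontrivial consequence of Beurling admissibility, relying on the regularity of $A_\omega^+$ together with either a partition-of-unity reduction to local inversions (each handled via a Neumann-type series in a suitable subalgebra) or the Shilov idempotent theorem. The polynomial growth condition~\ref{it:oper:2} is precisely what makes this theorem applicable, and it is also the assumption that will have to be refined in the next section when the symbol is only required to satisfy more delicate pointwise bounds.
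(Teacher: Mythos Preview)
Your argument is correct, but it proceeds along a genuinely different line from the paper. You package $W_\profile^\infty$ as the Fourier image of the Beurling algebra $L^1(\R^d,(1+\profile)\,dx)$, verify the Beurling--Domar non-quasi-analyticity condition from the polynomial bound~\ref{it:oper:2}, and then invoke the classical identification of the Gelfand spectrum with $\R^d\cup\{\infty\}$ together with regularity to obtain local inversion as a black box. The cleanup step (multiplying by a cut-off $\rho\in\schw\subset W_\profile^\infty$ to land back in $W_\profile^\infty$ rather than $\C+W_\profile^\infty$) is correct and necessary.

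The paper instead gives a self-contained constructive proof in the style of Newman~\cite{newman}: truncate $f=\fourier^{-1}\Phi$ to a compactly supported $g$, expand $\ph/\Phi$ as the Neumann series $\sum_n(\Psi-\Phi)^n\,\ph/\Psi^{n+1}$ with $\Psi=\fourier g$, and estimate each factor directly in the weighted $L^1$-norm (using that $\Psi$ is smooth, so $\ph/\Psi^{n+1}$ lies in $W_\profile^\infty$ with a controlled norm). Your route is shorter and more conceptual, at the cost of importing substantial Banach-algebra machinery; the paper's hands-on argument is elementary and, crucially, is deliberately chosen to serve as a template for the next section. There the relevant algebra $W_\profile^1$ is \emph{not} a Beurling $L^1$-space (membership is governed by $f/\profile\in L^\infty$ rather than by a weighted integrability condition), so your abstract approach does not transfer, whereas the Newman-type estimates do with only minor modifications. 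You are evidently aware of this tension in your final paragraph, and that awareness is exactly right.
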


\begin{proof}
We follow the proof of the classical Wiener's $1/f$ lemma given in~\cite{newman}. Let us denote by $\|\cdot\|_p$ the usual norm in $L^p(\R^d)$, where $p \in [1, \infty]$. We begin with the following elementary observation.

Clearly, if $\Phi = \fourier f$ and $f$ is integrable, then
\formula{
 \|\Phi\|_\infty & \le \|f\|_1 .
}
Conversely, for every $k = 1, 2, \ldots$ there is a constant $c_{d, k}$ such that if $\Phi$ is smooth, both $\Phi$ and $\Delta^k \Phi$ are integrable, and $f = \fourier^{-1} \Phi$, then
\formula{
 \sup \{ (1 + |x|^{2 k}) |f(x)| : x \in \R^d \} & \le c_{d, k} \|\Phi + (-\Delta)^k \Phi\|_1 .
}
In particular, if $2 k > d$, we find that $f$ is integrable, while if $2 k > d + q$ with $q$ as in condition~\ref{it:oper:2} in Definition~\ref{def:oper}, then $\profile f$ is integrable. It follows that if $2 k > d + q$, then there is a constant $c_{d, k}$ such that whenever $\Phi$ is smooth, and $\Phi$ and $\Delta^k \Phi$ are integrable, then $\Phi \in W_\profile^\infty$ and
\formula[eq:oper:bound]{
 \|(1 + \profile) f\|_1 & \le c_{d, k} \|\Phi + (-\Delta)^k \Phi\|_1 ,
}
where $f = \fourier^{-1} \Phi$.

We return to the proof of the lemma. Suppose that $\Phi \in W_\profile^\infty$, that is, $\Phi = \fourier f$ for an integrable function $f$ such that also $\profile f$ is integrable. Suppose furthermore that $K$ is a compact set and $\Phi(\xi) \ne 0$ for every $\xi \in K$. Let $\eps > 0$ be small enough, so that $|\Phi(\xi)| > 3 \eps$ for every $\xi$ in some bounded neighbourhood $U$ of $K$. With no loss of generality we assume that $\eps < \tfrac{1}{2}$. Choose $\ph \in \schw$ so that the support of $\ph$ is a compact subset of $U$ and $\ph(\xi) = 1$ for $\xi \in K$. Our goal is to prove that
\formula{
 \tilde{\Phi}(\xi) & = \frac{\ph(\xi)}{\Phi(\xi)}
}
is an element of $W_\profile^\infty$; here, of course, $\tilde{\Phi}(\xi) = 0$ for $\xi \in \R^d \setminus U$. Once this is proved, we have $\Phi(\xi) \tilde{\Phi}(\xi) = \ph(\xi) = 1$ for every $\xi \in K$, and so $\tilde{\Phi}$ has the desired property.

Define $g(x) = f(x)$ when $|x| < r$ and $g(x) = 0$ otherwise, where $r$ is large enough, so that
\formula{
 \|f - g\|_1 + \|\profile f - \profile g\|_1 & < \eps .
}
Clearly, $g$ and $\profile g$ are integrable, and therefore $\Psi = \fourier g$ is an element of $W_\profile^\infty$. Furthermore,
\formula{
 \|\Phi - \Psi\|_\infty & \le \|f - g\|_1 < \eps .
}
In particular, for $\xi \in U$ we have
\formula{
 |\Psi(\xi)| & \ge |\Phi(\xi)| - |\Phi(\xi) - \Psi(\xi)| > 3 \eps - \eps = 2 \eps ,
}
and
\formula{
 \biggl|\frac{\Psi(\xi) - \Phi(\xi)}{\Psi(\xi)}\biggr| & < \frac{\eps}{2 \eps} = \frac{1}{2} \, .
}
It follows that for $\xi \in U$,
\formula{
 \frac{1}{\Phi(\xi)} & = \sum_{n = 0}^\infty \frac{(\Psi(\xi) - \Phi(\xi))^n}{(\Psi(\xi))^{n + 1}} \, ,
}
and therefore for every $\xi \in \R^d$,
\formula[eq:oper:wiener]{
 \tilde{\Phi}(\xi) & = \frac{\ph(\xi)}{\Phi(\xi)} = \sum_{n = 0}^\infty (\Psi(\xi) - \Phi(\xi))^n \, \frac{\ph(\xi)}{(\Psi(\xi))^{n + 1}} \, .
}
We study the terms $(\Psi - \Phi)^n$ and $\ph / \Psi^{n + 1}$ separately.

Recall that $\Psi - \Phi = \fourier(g - f)$ and
\formula{
 \|(1 + \profile) (g - f)\|_1 & = \|g - f\|_1 + \|\profile g - \profile f\|_1 < \eps .
}
If $(g - f)^{*n}$ denotes the $n$-fold convolution of $g - f$, then $\fourier (g - f)^{*n} = (\Psi - \Phi)^n$. Furthermore, using condition~\ref{it:oper:1} in Definition~\ref{def:oper} as in~\eqref{eq:oper:conv}, we find that
\formula[eq:oper:est:1]{
 \bigl\| (1 + \profile) (g - f)^{*n} \bigr\|_1 & \le \bigl( \|(1 + \profile) (g - f)\|_1 \bigr)^n < \eps^n .
}
This is the desired bound for $(\Psi - \Phi)^n$, and we turn to the estimate of $\ph / \Psi^{n + 1}$.

Since $\Psi = \fourier g$ and $g$ has compact support, $\Psi$ is smooth in $\R^d$. Additionally, $|\Psi(\xi)| \ge 2 \eps > 0$ for $\xi \in U$, and $\ph(\xi) = 0$ for $\xi$ outside a compact subset of $U$. It follows that $\ph / \Psi^{n + 1}$ is smooth on $\R^d$, and equal to zero in $\R^d \setminus U$. In particular, by~\eqref{eq:oper:bound}, we find that $\ph / \Psi^{n + 1} \in W_\profile^\infty$, and if $h_n = \fourier^{-1}(\ph / \Psi^{n + 1})$, then
\formula{
 \bigl\|(1 + \profile) h_n\bigr\|_1 & \le c_{d, k} \bigl\|\ph / \Psi^{n + 1} + (-\Delta)^k (\ph / \Psi^{n + 1})\bigr\|_1 ,
}
where $k$ is a fixed sufficiently large positive integer and $c_{d, k}$ is an appropriate constant. By applying the product rule to $(-\Delta)^k (\ph / \Psi^{n + 1})$, we obtain a fixed number of terms, each of which is a product of: the derivative of $\ph$ of some order $j$, where $0 \le j \le 2 k$; a finite number of derivatives of $\Psi$ of total order $2 k - j$; $\Psi^{-n - 1 - i}$, where $0 \le i \le 2 k - j$; and a coefficient, which is an appropriate polynomial of $n$ of degree at most $2 k$. Furthermore, $|\Psi(\xi)| \ge 2 \eps$ for $\xi \in U$, and $2 \eps < 1$. Thus, $\Psi^{-n - 1 - i}(\xi) \le (2 \eps)^{-n - 1 - 2 k}$ when $\xi \in U$ and $0 \le i \le 2 k$. It follows that there is a constant $c_{d, k, \ph, \Psi}$ such that
\formula[eq:oper:est:2]{
 \bigl\|(1 + \profile) h_n\bigr\|_1 & \le c_{d, k, \ph, \Psi} |U| (1 + n^{2 k}) (2 \eps)^{-n - 1 - 2 k} .
}
This is the desired estimate for $\ph / \Psi^{n + 1}$.

We combine~\eqref{eq:oper:est:1} and~\eqref{eq:oper:est:2} as in~\eqref{eq:oper:conv}:
\formula{
 \bigl\|(1 + \profile) (g - f)^{*n} * h_n\bigr\|_1 & \le \bigl\|(1 + \profile) (g - f)^{*n}\bigr\|_1 \cdot \bigl\|(1 + \profile) h_n\bigr\|_1 \\
 & \le c_{d, k, \ph, \Psi} |U| (1 + n^k) (2 \eps)^{-1 - 2 k} \cdot 2^{-n} .
}
By Fubini's theorem, we obtain
\formula{
 \biggl\|(1 + \profile) \sum_{n = 0}^\infty (g - f)^{*n} * h_n\biggr\|_1 & < \infty .
}
In particular, the series
\formula{
 \tilde{f} & = \sum_{n = 0}^\infty (g - f)^{*n} * h_n
}
defines an integrable function such that also $\profile \tilde{f}$ is integrable, and again by Fubini's theorem we find that
\formula{
 \fourier \tilde{f} & = \sum_{n = 0}^\infty \fourier\bigl((g - f)^{*n} * h_n\bigr) \\
 & = \sum_{n = 0}^\infty \fourier\bigl((g - f)^{*n}\bigr) \fourier h_n \\
 & = \sum_{n = 0}^\infty (\Psi - \Phi)^n (\ph / \Psi^{n + 1}) .
}
Thus, by~\eqref{eq:oper:wiener}, $\fourier \tilde{f} = \tilde{\Phi}$, and therefore $\tilde{\Phi} \in W_\profile^\infty$, as desired.
\end{proof}

Lemmas~\ref{lem:oper:1} and~\ref{lem:oper:2} immediately lead to the following result.

\begin{proposition}
\label{prop:oper}
The set $W_\profile^\infty$ defined in Definition~\ref{def:oper} is a Wiener-type algebra.
\end{proposition}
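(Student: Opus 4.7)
The plan is to simply combine the two preceding lemmas, which together already verify all three defining properties of a Wiener-type algebra as specified in Definition~\ref{def:algebra}. Since the entire content of the proposition is precisely that $W_\profile^\infty$ satisfies these properties, no further work is required beyond bookkeeping.

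First I would invoke Lemma~\ref{lem:oper:1}, which establishes that $W_\profile^\infty$ is an algebra of continuous functions satisfying conditions~\ref{it:al:1} and~\ref{it:al:2} of Definition~\ref{def:algebra}. In particular, every element of $W_\profile^\infty$ is bounded and continuous (being the Fourier transform of an integrable function) and so corresponds to a tempered distribution, giving~\ref{it:al:1}; and the proof of the lemma shows $\schw \subseteq W_\profile^\infty$ together with the algebra property, which together give~\ref{it:al:2} via the identity $\ph \cdot \Phi = \ph \cdot \Phi$ regarded as a product inside $W_\profile^\infty$.

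Next I would invoke Lemma~\ref{lem:oper:2}, which is precisely the variant of Wiener's $1/f$ theorem required by condition~\ref{it:al:3}: on any compact set $K$ on which $\Phi \in W_\profile^\infty$ is nonvanishing, there exists $\tilde{\Phi} \in W_\profile^\infty$ with $\Phi(\xi) \tilde{\Phi}(\xi) = 1$ for every $\xi \in K$. Combined with the previous paragraph, all three clauses of Definition~\ref{def:algebra} are satisfied, and the proposition follows at once. There is no genuine obstacle here since the hard analytic content — the weighted estimate~\eqref{eq:oper:bound} together with the Neumann-series manipulation~\eqref{eq:oper:wiener} — has already been absorbed into Lemma~\ref{lem:oper:2}.
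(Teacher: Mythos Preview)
Your proposal is correct and matches the paper's approach exactly: the paper simply states that Lemmas~\ref{lem:oper:1} and~\ref{lem:oper:2} immediately lead to Proposition~\ref{prop:oper}, without any further argument. Your write-up is just a slightly more detailed unpacking of that same observation.
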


Suppose that $\profile$ satisfies the conditions of Definition~\ref{def:oper}, $\check{\profile}(x) = \profile(-x)$, and $\check{L}$ is an integrable distribution such that $\check{L} \schwconv \check{\profile}$ is well-defined. Then for every $\ph \in \schw$ the integrable function $\check{L} * \ph$ is convolvable with $\check{\profile}$, and so, in particular, $\|(\check{L} * \ph) \cdot \profile\|_1 < \infty$. Thus, $\fourier \ph \cdot \fourier \check{L} = \fourier(\check{L} * \ph) \in W_\profile^\infty$. By choosing $\ph$ such that $\fourier \ph(\xi) = 1$ for $\xi$ in a given compact set, we find that $\fourier \check{L}$ belongs to $W_\profile^\infty$ locally on $\R^d$.

We claim that if $h$ is a function on $\R^d$ such that $h / (1 + \check{\profile})$ is bounded, then $h$ corresponds to a tempered distribution and $\fourier h$ acts on $W_\profile^\infty$. Indeed: by condition~\ref{it:oper:2} in Definition~\ref{def:oper}, $h$ is bounded by some polynomial and hence it corresponds to a tempered distribution. Furthermore, if $\Phi, \Psi \in W_\profile^\infty$, then $\Phi = \fourier f$ and $\Psi = \fourier g$ for some integrable functions $f, g$ such that also $\profile f$ and $\profile g$ are integrable. Thus, by condition~\ref{it:oper:1} in Definition~\ref{def:oper} and Fubini's theorem, we find that
\formula{
 & |h| * |f| * |g|(x) = \int_{\R^d} \int_{\R^d} h(x - y - z) f(y) g(z) dy dz \\
 & \qquad \le (1 + \profile(-x)) \int_{\R^d} \int_{\R^d} \frac{h(x - y - z)}{1 + \profile(-x + y + z)} \, (1 + \profile(y)) f(y) (1 + \profile(z)) g(z) dy dz \\
 & \qquad \le (1 + \check{\profile}(x)) \|h / (1 + \check{\profile})\|_\infty \|(1 + \profile) g\|_1 \|(1 + \profile) h\|_1 .
}
Therefore, by Fubini's theorem, we have
\formula{
 h * (f * g) & = (h * f) * g ,
}
and in fact, due to condition~\ref{it:oper:2} in Definition~\ref{def:oper},
\formula{
 h \schwconv (f * g) & = (h \schwconv f) \schwconv g .
}
Applying the exchange formula, we conclude that
\formula{
 \fourier H \schwprod (\Phi \cdot \Psi) = (\fourier H \schwprod \Phi) \schwprod \Psi ,
}
which completes the proof of our claim.

As an immediate corollary of Theorem~\ref{thm:liouville} and Proposition~\ref{prop:oper}, as well as the two properties discussed above, and after exchanging the roles of $\profile$ and $\check{\profile}$, we obtain the following variant of Liouville's theorem.

\begin{corollary}[Liouville's theorem under generalised moment condition]
\label{cor:liouville:oper}
Suppose that $\profile$ satisfies the conditions of Definition~\ref{def:oper}. Let $\check{L}$ be an integrable distribution which is convolvable with $\profile$. Suppose that $\fourier \check{L}$, which is necessarily a continuous function, has no zeroes in an open set $U$. Let $h$ be a function such that $h / (1 + \profile)$ is bounded on $\R^d$ and $\check{L} \schwconv h = 0$. Then the spectrum of $h$ is contained in $\R^d \setminus U$.

In particular, if $U = \R^d \setminus \{0\}$, then $h$ is a polynomial.
\end{corollary}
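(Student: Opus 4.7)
The plan is to argue that Corollary~\ref{cor:liouville:oper} is a routine assembly of the pieces prepared immediately before the statement: Theorem~\ref{thm:liouville} combined with Proposition~\ref{prop:oper} and the two general properties of the algebras $W_\profile^\infty$ derived in the paragraphs preceding the corollary. The only mild subtlety will be to make sure that the appearance of $\check{\profile}$ in that earlier discussion is handled cleanly; the corollary is stated with $\profile$ where the discussion uses $\check{\profile}$, and both functions satisfy the hypotheses of Definition~\ref{def:oper} because conditions~\ref{it:oper:1} and~\ref{it:oper:2} are invariant under the reflection $x \mapsto -x$.

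More concretely, the first step would be to set $W := W_{\check{\profile}}^\infty$, and to invoke Proposition~\ref{prop:oper} (applied to $\check{\profile}$ in place of $\profile$) to conclude that $W$ is a Wiener-type algebra. Next I would verify the two hypotheses of Theorem~\ref{thm:liouville} for this $W$, $\check{L}$ and $h$: first, that $\fourier \check{L}$ belongs locally to $W$ on $\R^d$, which follows from the convolvability of $\check{L}$ with $\profile = (\check{\profile})^\vee$ exactly as in the general argument given above the corollary (for any $\ph \in \schw$ with $\fourier \ph \equiv 1$ on a given compact set, the integrable function $\check{L} * \ph$ and its product with $\check{\profile}$ are both integrable, so $\fourier \ph \cdot \fourier \check{L} \in W$); and second, that $\fourier h$ acts on $W$, which follows from $h/(1+\profile)$ being bounded by the Fubini-type argument in the same paragraph, because the bound $\profile(y+z) \le (1+\profile(y))(1+\profile(z))$ combined with the integrability of $(1+\check{\profile}) f$ and $(1+\check{\profile}) g$ makes $h * f * g$ associative in the $\schw'$ sense.

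With both hypotheses in place, Theorem~\ref{thm:liouville} applied with this $W$ yields immediately that the spectrum of $h$ is contained in $\R^d \setminus U$. The last sentence of the corollary, concerning $U = \R^d \setminus \{0\}$, is then exactly the final sentence of Theorem~\ref{thm:liouville}: a tempered distribution whose Fourier transform is supported in $\{0\}$ is necessarily a polynomial.

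I do not anticipate a genuine obstacle; the substantive work (the $1/f$ estimate in Lemma~\ref{lem:oper:2}, the algebra structure of $W_\profile^\infty$, and the abstract Liouville machinery of Theorem~\ref{thm:liouville}) has already been done. The only thing that requires a line of care is the bookkeeping between $\profile$ and $\check{\profile}$: the assumption ``$\check{L}$ convolvable with $\profile$'' and ``$h/(1+\profile)$ bounded'' of the corollary correspond, after reflection, to the hypotheses ``$\check{L}$ convolvable with $\check{\profile}$'' and ``$h/(1+\check{\profile})$ bounded'' that were used in the preceding discussion — a pointwise substitution that preserves all the inequalities used.
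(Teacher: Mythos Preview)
Your proposal is correct and follows essentially the same approach as the paper: the paper explicitly states that the corollary is an immediate consequence of Theorem~\ref{thm:liouville} and Proposition~\ref{prop:oper}, together with the two properties established in the preceding paragraphs, ``after exchanging the roles of $\profile$ and $\check{\profile}$.'' Your handling of the $\profile$/$\check{\profile}$ swap by working with $W_{\check{\profile}}^\infty$ and noting the reflection-invariance of the hypotheses of Definition~\ref{def:oper} is exactly the bookkeeping the paper has in mind.
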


We conclude this section with examples of admissible functions $\profile$.

\begin{example}
\label{ex:liouville:oper:power}
If $\alpha \ge 0$, then $\profile(x) = (1 + |x|)^\alpha - 1$ satisfies conditions~\ref{it:oper:1} and~\ref{it:oper:2} in Definition~\ref{def:oper}. Indeed,
\formula{
 (1 + \profile(x)) (1 + \profile(y)) & = (1 + |x|)^\alpha (1 + |y|)^\alpha \\
 & \ge (1 + |x| + |y|)^\alpha \ge (1 + |x + y|)^\alpha = 1 + \profile(x + y) .
}
Thus, if $L$ is a Lévy operator which is not concentrated on a proper closed subgroup of $\R^d$, and the Lévy measure $\nu$ of $L$ satisfies
\formula{
 \int_{\R^d \setminus B} |y|^\alpha \nu(dy) & < \infty ,
}
then every $L$-harmonic function $h$ (in the sense of tempered distributions) such that $(1 + |x|)^{-\alpha} h(x)$ is a bounded function of $x \in \R^d$ is necessarily a polynomial. When $\alpha < 1$, then it follows that $h$ is in fact constant. For $\alpha = 0$ we thus recover Liouville's theorem for bounded $L$-harmonic functions given in Corollary~\ref{cor:liouville:bounded}.
\end{example}

\begin{example}
\label{ex:liouville:oper:log}
If $\beta \ge 0$, then it is easy to see that $\profile(x) = (\log(e^2 + |x|))^\beta$ satisfies conditions~\ref{it:oper:1} and~\ref{it:oper:2} in Definition~\ref{def:oper}. Indeed: we have
\formula{
 \profile(x) \profile(y) & = \bigl( \log(e^2 + |x|) \log(e^2 + |y|) \bigr)^\beta \\
 & \ge \bigl( \log(e^2) \log(e^2 + \max\{|x|, |y|\}) \bigr)^\beta \\
 & \ge \bigl( 2 \log(e^2 + \tfrac{1}{2} (|x| + |y|)) \bigr)^\beta \\
 & = \bigl( \log(e^4 + e^2 (|x| + |y|) + \tfrac{1}{4} (|x| + |y|)^2) \bigr)^\beta \\
 & \ge \bigl( \log(e^2 + |x + y|) \bigr)^\beta = \profile(x + y) ,
}
and hence $1 + \profile(x + y) \le 1 + \profile(x) \profile(y) \le (1 + \profile(x)) (1 + \profile(y))$.

It follows that if $L$ is a Lévy operator which is not concentrated on a proper closed subgroup of $\R^d$, and the Lévy measure $\nu$ of $L$ satisfies
\formula{
 \int_{\R^d \setminus B} (\log |y|)^\beta \nu(dy) & < \infty ,
}
then every $L$-harmonic function $h$ (in the sense of tempered distributions) such that $(\log(e + |x|))^{-\beta} h(x)$ is a bounded function of $x \in \R^d$ is necessarily constant.
\end{example}


\subsection{Harmonic functions with finite generalised negative moments}
\label{sec:signed:funct}

As in the previous section, we define a class of Wiener-type algebras, again parameterised by auxiliary functions $\profile$. Once again we discuss examples of admissible functions $\profile$ at the end of this section, and here we remark that a typical example is $\profile(x) = c_{d, \alpha} (1 + |x|)^{-d - \alpha}$, where $\alpha > 0$ is a parameter and $c_{d, \alpha}$ is a sufficiently small constant.

\begin{definition}
\label{def:funct}
Suppose that $\profile$ is an integrable function on $\R^d$ with the following properties:
\begin{enumerate}[label=(\alph*)]
\item\label{it:funct:1} we have $\profile * \profile(x) \le \profile(x)$ for every $x \in \R^d$, and if $\profile_r(x) = \profile(x)$ when $|x| \ge r$ and $\profile_r(x) = 0$ otherwise, then
\formula{
 \lim_{r \to \infty} \sup \biggl\{\frac{\profile_r * \profile_r(x)}{\profile(x)} : x \in \R^d \biggr\} & = 0 ;
}
\item\label{it:funct:2} for some positive constants $c, q$ we have $\profile(x) \ge c (1 + |x|)^{-q}$ for every $x \in \R^d$.
\end{enumerate}
Define $W_\profile^1$ to be the class of Fourier transforms of integrable functions $f$ such that $f / \profile$ is bounded.
\end{definition}

We remark that if $\profile * \profile \le c \profile$ for some constant $c$, then we may replace $\profile$ by $c^{-1} \profile$ to get a function which satisfies $\profile * \profile \le \profile$. We also note that condition $\profile * \profile(x) \le c \profile(x)$ is called \emph{direct jump property} in~\cite{ks}, and condition~\ref{it:funct:1} plays a crucial role in~\cite{kp}; we refer to these papers for further discussion and references.

Below we prove that $W_\profile^1$ is a Wiener-type algebra according to Definition~\ref{def:algebra}. The argument is similar to the one applied in the previous section. Nevertheless, since there are essential differences between the two, we provide all details.

\begin{lemma}
\label{lem:funct:1}
The set $W_\profile^1$ defined in Definition~\ref{def:funct} is an algebra of continuous functions which satisfies conditions~\ref{it:al:1} and~\ref{it:al:2} of Definition~\ref{def:algebra}.
\end{lemma}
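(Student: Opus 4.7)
The plan is to verify the three required properties in turn, mirroring closely the pattern used in the proof of Lemma~\ref{lem:oper:1}, but replacing the submultiplicative weight $1+\profile$ with the submultiplicative kernel $\profile$ of Definition~\ref{def:funct}. First I would observe that every $\Phi \in W_\profile^1$ is by definition $\fourier f$ for some $f \in L^1$, hence a bounded continuous function, and in particular a tempered distribution; this immediately handles condition~\ref{it:al:1} and shows that $W_\profile^1$ consists of continuous functions. Linearity of $W_\profile^1$ is obvious.

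The main step is closure under pointwise multiplication. Given $\Phi = \fourier f$ and $\Psi = \fourier g$ in $W_\profile^1$, we have $\Phi \cdot \Psi = \fourier(f * g)$, and $f * g \in L^1$ by the standard $L^1$-convolution inequality. The boundedness of $(f * g)/\profile$ will come directly from the first half of condition~\ref{it:funct:1} of Definition~\ref{def:funct}: using $|f(y)| \le \|f/\profile\|_\infty \, \profile(y)$ and similarly for $g$, one gets
\formula{
 |f * g(x)| & \le \|f/\profile\|_\infty \, \|g/\profile\|_\infty \int_{\R^d} \profile(y) \profile(x - y) dy \\
 & = \|f/\profile\|_\infty \, \|g/\profile\|_\infty \, (\profile * \profile)(x) \le \|f/\profile\|_\infty \, \|g/\profile\|_\infty \, \profile(x),
}
so $(f*g)/\profile$ is bounded and $\Phi \cdot \Psi \in W_\profile^1$. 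This is the only place in the lemma where condition~\ref{it:funct:1} enters; the second (asymptotic) clause of~\ref{it:funct:1} is presumably reserved for the forthcoming Wiener-type $1/f$ argument in Lemma~\ref{lem:funct:2}.

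Next I would show that $\schw \subseteq W_\profile^1$, which will take care of condition~\ref{it:al:2}. For $\ph \in \schw$, set $\psi = \fourier^{-1} \ph \in \schw$. Then $\psi \in L^1$, and condition~\ref{it:funct:2} of Definition~\ref{def:funct} gives $1/\profile(x) \le c^{-1}(1 + |x|)^q$, so that $\psi/\profile$ is bounded because $(1+|x|)^q \psi(x)$ is. Hence $\ph = \fourier \psi \in W_\profile^1$. Combined with the multiplicative closure proved above, this yields $\ph \cdot \Phi \in W_\profile^1$ for every $\ph \in \schw$ and $\Phi \in W_\profile^1$.

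I do not expect any genuine obstacle here: the lemma is essentially a bookkeeping exercise, and the only mild subtlety is recognising that the submultiplicativity $\profile * \profile \le \profile$ (as opposed to $\le c \profile$, which could always be absorbed by rescaling) gives the estimate on $(f*g)/\profile$ with no extra constant. The nontrivial work of the section — producing a local inverse in $W_\profile^1$ — is postponed to Lemma~\ref{lem:funct:2}, where the decay clause of condition~\ref{it:funct:1} will presumably replace the role played by the truncation $f \mapsto f \ind_{\{|x|<r\}}$ in the proof of Lemma~\ref{lem:oper:2}.
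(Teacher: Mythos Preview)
Your proposal is correct and follows essentially the same approach as the paper: both verify continuity and tempered-distribution membership via the Fourier transform of an $L^1$ function, close under multiplication using $|f*g|\le \|f/\profile\|_\infty\|g/\profile\|_\infty\,\profile*\profile\le \|f/\profile\|_\infty\|g/\profile\|_\infty\,\profile$, and establish $\schw\subseteq W_\profile^1$ from condition~\ref{it:funct:2} to conclude~\ref{it:al:2}. Your remarks about which hypotheses are used where (and which are reserved for Lemma~\ref{lem:funct:2}) are also accurate.
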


\begin{proof}
Clearly, $W_\profile^1$ is a linear space. If $\Phi \in W_\profile^1$, then $\Phi$ is the Fourier transform of an integrable function $f$, and hence $\Phi$ is a bounded continuous function. Thus, $W_\profile^1$ is a class of continuous functions, and every $\Phi \in W_\profile^1$ corresponds to a tempered distribution.

We claim that $W_\profile^1$ is indeed an algebra of functions. If $\Phi, \Psi \in W_\profile^1$, then $\Phi = \fourier f$ and $\Psi = \fourier g$ for some integrable functions $f, g$, and for some constants $c_1, c_2$ we have $|f(x)| \le c_1 \profile(x)$ and $|g(x)| \le c_2 \profile(x)$ for every $x \in \R^d$. It follows that $h = f * g$ is an integrable function, and
\formula[eq:funct:conv]{
 |h(x)| & = |f * g(x)| \le |f| * |g|(x) \le (c_1 \profile) * (c_2 \profile)(x) = c_1 c_2 \profile * \profile(x) \le c_1 c_2 \profile(x) ,
}
that is, $h / \profile$ is bounded. Since $\Phi \cdot \Psi = \fourier f \cdot \fourier g = \fourier (f * g) = \fourier h$, we conclude that $\Phi \cdot \Psi \in W_\profile^1$, as desired.

We now show that $W_\profile^1$ contains $\schw$. Indeed: if $\ph \in \schw$ and $\psi = \fourier^{-1} \ph$, then $\psi \in \schw$, and so, in particular, $\psi$ is integrable. Furthermore, by condition~\ref{it:funct:2} in Definition~\ref{def:funct}, $\psi / \profile$ is bounded. Thus, $\ph = \fourier \psi$ indeed belongs to $W_\profile^1$.

It remains to observe that if $\ph \in \schw$ and $\Phi \in W_\profile^1$, then $\ph \in W_\profile^1$, and therefore $\ph \cdot \Phi \in W_\profile^1$.
\end{proof}

As before, in order to prove that $W_\profile^1$ is a Wiener-type algebra, it remains to verify that $W_\profile^1$ satisfies condition~\ref{it:al:3} of Definition~\ref{def:algebra}, that is, a variant of Wiener's $1/f$ theorem holds in $W_\profile^1$.

\begin{lemma}
\label{lem:funct:2}
Let $W_\profile^1$ be the set defined in Definition~\ref{def:funct}. Suppose that $K \subseteq \R^d$ is a compact set, $\Phi \in W_\profile^1$ and $\Phi(\xi) \ne 0$ for every $\xi \in K$. Then there is $\tilde{\Phi} \in W_\profile^1$ such that $\Phi(\xi) \tilde{\Phi}(\xi) = 1$ for every $\xi \in K$.
\end{lemma}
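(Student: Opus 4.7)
The plan is to follow the Neumann series argument used in the proof of Lemma~\ref{lem:oper:2}, but to replace the $L^1$-weight estimates by pointwise majorizations against $\profile$. The setup is essentially the same: write $\Phi = \fourier f$ with $|f| \le C \profile$ where $C = \|f/\profile\|_\infty$, pick $\eps \in (0,\tfrac{1}{2})$ and a bounded open neighbourhood $U$ of $K$ on which $|\Phi| > 3\eps$, choose $\ph \in \schw$ with compact support in $U$ and $\ph = 1$ on $K$, and then truncate $g(x) = f(x) \ind_{\{|x|<r\}}$ for $r$ large enough. Since $\|f-g\|_1 < \eps$ forces $\|\Psi-\Phi\|_\infty < \eps$, we have $|\Psi|>2\eps$ on $U$, and the Neumann series
\formula{
 \tilde{\Phi}(\xi) & = \frac{\ph(\xi)}{\Phi(\xi)} = \sum_{n=0}^\infty (\Psi(\xi)-\Phi(\xi))^n \, \frac{\ph(\xi)}{(\Psi(\xi))^{n+1}}
}
converges. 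Defining $h_n = \fourier^{-1}(\ph/\Psi^{n+1})$, the candidate is
\formula{
 \tilde{f} & = \sum_{n=0}^\infty (g-f)^{*n} * h_n ,
}
and the goal is to prove $|\tilde f| \le C' \profile$; integrability of $\tilde f$ will then follow automatically from integrability of $\profile$.

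The heart of the proof, and its main novelty compared to Lemma~\ref{lem:oper:2}, is the pointwise estimate of $(g-f)^{*n}$. Since $f-g$ is supported outside the ball of radius $r$ and $|f-g| \le C\profile_r$, one has $|(g-f)^{*n}| \le C^n \profile_r^{*n}$. The submultiplicativity $\profile * \profile \le \profile$ and monotonicity of convolution on nonnegative functions yield $\profile_r^{*m} \le \profile^{*m} \le \profile$ for every $m \ge 1$. Combining this with the second part of condition~\ref{it:funct:1}, which lets us pick $r$ so that $\profile_r * \profile_r \le \delta \profile$ for an arbitrarily small $\delta$, and iterating $(\profile_r^{*2})^{*k} \le (\delta\profile)^{*k} \le \delta^k \profile$, I obtain
\formula{
 |(g-f)^{*n}| & \le C^n \profile_r^{*n} \le C^n \delta^{\lfloor n/2 \rfloor} \profile \qquad (n \ge 1).
}
This is the geometric-in-$n$ pointwise bound I need, and it is the step I expect to be the main obstacle, since it is precisely here that both parts of condition~\ref{it:funct:1} are used in tandem.

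For the other factor $h_n$, I adapt the argument from Lemma~\ref{lem:oper:2}: the function $\ph/\Psi^{n+1}$ is smooth with fixed compact support in $U$, and bounding its Laplacian iterates by the product rule yields, for $k$ a fixed integer with $2k \ge q$,
\formula{
 (1+|x|^{2k}) |h_n(x)| & \le c_{d,k,\ph,\Psi}(1 + n^{2k}) (2\eps)^{-n-1-2k} .
}
Condition~\ref{it:funct:2} gives $(1+|x|)^{-2k} \le c^{-1} \profile(x)$, whence $|h_n| \le A_n \profile$ with $A_n = c'(1 + n^{2k})(2\eps)^{-n-1-2k}$. Applying $\profile * \profile \le \profile$ once more,
\formula{
 |(g-f)^{*n}| * |h_n| & \le C^n \delta^{\lfloor n/2 \rfloor} A_n \, \profile * \profile \le C^n \delta^{\lfloor n/2 \rfloor} A_n \, \profile .
}
Choosing $r$ so large that $\delta < \eps^2/C^2$ makes the series $\sum C^n \delta^{\lfloor n/2 \rfloor} A_n$ geometrically convergent, and Fubini gives $|\tilde f| \le C'\profile$. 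The same computation combined with the exchange formula and the termwise identity $\fourier((g-f)^{*n} * h_n) = (\Psi-\Phi)^n \ph/\Psi^{n+1}$ shows $\fourier \tilde f = \tilde \Phi$, so $\tilde \Phi \in W_\profile^1$ and $\Phi(\xi)\tilde\Phi(\xi) = \ph(\xi) = 1$ on $K$, as required.
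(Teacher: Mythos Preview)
Your proof is correct and follows essentially the same Neumann-series argument as the paper: the same truncation $g = f\ind_{B_r}$, the same decomposition $\tilde\Phi = \sum_n (\Psi-\Phi)^n \ph/\Psi^{n+1}$, and the same key step of pairing factors via $\profile_r * \profile_r \le \delta\profile$ to get $|(g-f)^{*n}| \le C^n\delta^{\lfloor n/2\rfloor}\profile$, with the choice $\delta < \eps^2/C^2$ matching the paper's $\profile_r * \profile_r \le (\eps^2/\lambda^2)\profile$. The only cosmetic difference is that the paper tracks the $L^1$ norm and the $\profile$-bound of $\tilde f$ in parallel, whereas you correctly note that the pointwise bound $|\tilde f| \le C'\profile$ already implies integrability.
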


\begin{proof}
Once again we follow the proof of the classical Wiener's $1/f$ lemma given in~\cite{newman}. We denote by $\|\cdot\|_p$ the usual norm in $L^p(\R^d)$, where $p \in [1, \infty]$. As in the proof of Lemma~\ref{lem:oper:2}, we have the following two observations.

If $\Phi = \fourier f$ and $f$ is integrable, then
\formula{
 \|\Phi\|_\infty & \le \|f\|_1 .
}
Conversely, if $k = 1, 2, \ldots$, $\Phi$ is smooth, both $\Phi$ and $\Delta^k \Phi$ are integrable, and $f = \fourier^{-1} \Phi$, then
\formula{
 \sup \{ (1 + |x|^{2 k}) |f(x)| : x \in \R^d \} & \le c_{d, k} \|\Phi + (-\Delta)^k \Phi\|_1 ,
}
where $c_{d, k}$ is an appropriate constant. In particular, if $2 k > d$, then $f$ is integrable, while if $2 k \ge q$ with $q$ as in condition~\ref{it:funct:2} in Definition~\ref{def:funct}, then $f / \profile$ is bounded. It follows that if $2 k > d$ and $2 k \ge q$, then there is a constant $c_{d, k}$ such that whenever $\Phi$ is smooth, and $\Phi$ and $\Delta^k \Phi$ are integrable, then $\Phi \in W_\profile^1$ and
\formula[eq:funct:bound]{
 \|f\|_1 + \|f / \profile\|_\infty & \le c_{d, k} \|\Phi + (-\Delta)^k \Phi\|_1 ,
}
where $f = \fourier^{-1} \Phi$.

We return to the proof of the lemma. Suppose that $\Phi \in W_\profile^1$, that is, $\Phi = \fourier f$ for an integrable function $f$ such that $f / \profile$ is bounded. Suppose furthermore that $K$ is a compact set and $\Phi(\xi) \ne 0$ for every $\xi \in K$. Let $\eps > 0$ be small enough, so that $|\Phi(\xi)| > 3 \eps$ for every $\xi$ in some bounded neighbourhood $U$ of $K$. Choose $\ph \in \schw$ so that the support of $\ph$ is a compact subset of $U$ and $\ph(\xi) = 1$ for $\xi \in K$. As in the proof of Lemma~\ref{lem:oper:2}, our goal is to prove that
\formula{
 \tilde{\Phi}(\xi) & = \frac{\ph(\xi)}{\Phi(\xi)}
}
is an element of $W_\profile^1$; here, of course, $\tilde{\Phi}(\xi) = 0$ for $\xi \in \R^d \setminus U$. Once this is shown, we have $\Phi(\xi) \tilde{\Phi}(\xi) = \ph(\xi) = 1$ for every $\xi \in K$, and so $\tilde{\Phi}$ has the desired property.

For $r > 0$, let $B_r$ denote the ball of radius $r$ centred at the origin, and denote $\profile_r(x) = \profile(x) \ind_{B_r}(x)$, as in condition~\ref{it:funct:1} in Definition~\ref{def:funct}. Choose $r$ large enough, so that if $g(x) = f(x) \ind_{B_r}(x)$, then
\formula{
 \|f - g\|_1 & < \eps
}
and
\formula[eq:funct:r]{
 \profile_r * \profile_r(x) & \le \frac{\eps^2}{\|f / \profile\|_\infty^2} \, \profile(r)
}
for every $x \in \R^d$ (see condition~\ref{it:funct:1} in Definition~\ref{def:funct}). Clearly, $g$ is integrable and $g / \profile$ is bounded, and therefore $\Psi = \fourier g$ is an element of $W_\profile^1$. Furthermore,
\formula{
 \|\Phi - \Psi\|_\infty & \le \|f - g\|_1 < \eps .
}
In particular, for $\xi \in U$ we have
\formula{
 |\Psi(\xi)| & \ge |\Phi(\xi)| - |\Phi(\xi) - \Psi(\xi)| > 3 \eps - \eps = 2 \eps ,
}
and
\formula{
 \biggl|\frac{\Psi(\xi) - \Phi(\xi)}{\Psi(\xi)}\biggr| & < \frac{\eps}{2 \eps} = \frac{1}{2} \, ,
}
so that for $\xi \in U$ we have
\formula{
 \frac{1}{\Phi(\xi)} & = \frac{1}{\Psi(\xi)} \, \sum_{n = 0}^\infty \biggl(\frac{\Psi(\xi) - \Phi(\xi)}{\Psi(\xi)}\biggr)^n .
}
We conclude that for every $\xi \in \R^d$,
\formula[eq:funct:wiener]{
 \tilde{\Phi}(\xi) & = \frac{\ph(\xi)}{\Phi(\xi)} = \sum_{n = 0}^\infty \frac{\ph(\xi)}{(\Psi(\xi))^{n + 1}} \, (\Psi(\xi) - \Phi(\xi))^n ,
}
and we study the terms $\ph / \Psi^{n + 1}$ and $(\Psi - \Phi)^n$ separately.

Recall that $\Psi - \Phi = \fourier(g - f)$ and $\|g - f\|_1 < \eps$. Furthermore, if $\lambda = \|f / \profile\|_\infty$, then
\formula{
 |g(x) - f(x)| & \le \lambda \profile_r(x)
}
for every $x \in \R^d$. If $(g - f)^{*n}$ denotes the $n$-fold convolution of $g - f$, then $\fourier (g - f)^{*n} = (\Psi - \Phi)^n$, and
\formula[eq:funct:est:1]{
 \|(g - f)^{*n}\|_1 & \le \|g - f\|_1^n < \eps^n .
}
The estimate of $(g - f)^{*n} / \profile$ is more involved. If $n$ is even, then, by~\eqref{eq:funct:r},
\formula{
 |(g - f)^{*n}(x)| & \le (\lambda \profile_r)^{*n}(x) \\
 & \le \lambda^n (\profile_r * \profile_r)^{*n / 2}(x) \\
 & \le \lambda^n (\lambda^{-2} \eps^2 \profile)^{*n / 2}(x) \\
 & = \eps^n \profile^{*n / 2}(x) \le \eps^n \profile(x) ,
}
and if $n$ is odd, then, in a similar manner,
\formula{
 |(g - f)^{*n}(x)| & \le (\lambda \profile_r)^{*n}(x) \\
 & \le \lambda^n (\profile_r * \profile_r)^{*\lfloor n/2\rfloor} * \profile_r(x) \\
 & \le \lambda^n (\lambda^{-2} \eps^2 \profile)^{*\lfloor n/2\rfloor} * \profile(x) \\
 & = \lambda \eps^{n - 1} \profile^{*\lfloor n/2\rfloor + 1}(x) \le \lambda \eps^{n - 1} \profile(x) .
}
Thus, for an arbitrary $n$,
\formula[eq:funct:est:2]{
 |(g - f)^{*n}(x)| & \le (\lambda + \eps) \eps^{n - 1} \profile(x) .
}
These are the desired bounds for $(\Psi - \Phi)^n$, and we now study $\ph / \Psi^{n + 1}$.

Since $\Psi = \fourier g$ and $g$ has compact support, $\Psi$ is smooth in $\R^d$. Additionally, $|\Psi(\xi)| \ge 2 \eps > 0$ for $\xi \in U$, and $\ph(\xi) = 0$ for $\xi$ outside a compact subset of $U$. It follows that $\ph / \Psi^{n + 1}$ is smooth on $\R^d$, and equal to zero in $\R^d \setminus U$. In particular, by~\eqref{eq:funct:bound}, we find that $\ph / \Psi^{n + 1} \in W_\profile^1$, and if $h_n = \fourier^{-1}(\ph / \Psi^{n + 1})$, then
\formula{
 \|h_n\|_1 + \|h_n / \profile\|_\infty & \le c_{d, k} \bigl\|\ph / \Psi^{n + 1} + (-\Delta)^k (\ph / \Psi^{n + 1})\bigr\|_1 ,
}
where $k$ is a fixed sufficiently large positive integer and $c_{d, k}$ is an appropriate constant. By applying the product rule to $(-\Delta)^k (\ph / \Psi^{n + 1})$, as in the proof of Lemma~\ref{lem:oper:2} we find that there is a constant $c_{d, k, \ph, \Psi}$ such that
\formula[eq:funct:est:3]{
 \|h_n\|_1 + \|h_n / \profile\|_\infty & \le c_{d, k, \ph, \Psi} |U| (1 + n^k) (2 \eps)^{-n - 1 - 2 k} ,
}
and this is the desired estimate for $\ph / \Psi^{n + 1}$.

We combine~\eqref{eq:funct:est:1} and~\eqref{eq:funct:est:3} to find that
\formula{
 \|(g - f)^{*n} * h_n\|_1 & \le \|(g - f)^{*n}\|_1 \|h_n\|_1 \le \cdot c_{d, k, \ph, \Psi} |U| (1 + n^k) (2 \eps)^{-1 - 2 k} \cdot 2^{-n} ,
}
and similarly we combine combine~\eqref{eq:funct:est:2} and~\eqref{eq:funct:est:3} as in~\eqref{eq:funct:conv} to find that
\formula{
 |(g - f)^{*n} * h_n(x)| & \le c_{d, k, \ph, \Psi} |U| (1 + n^k) (2 \eps)^{-2 - 2 k} (\lambda + \eps) \cdot 2^{1 - n} \profile(x) .
}
By Fubini's theorem, we obtain
\formula{
 \biggl\|\sum_{n = 0}^\infty (g - f)^{*n} * h_n\biggr\|_1 & < \infty ,
}
and
\formula{
 \biggl\| \frac{1}{\profile} \sum_{n = 0}^\infty (g - f)^{*n} * h_n \biggr\|_\infty & < \infty .
}
In particular, the series
\formula{
 \tilde{f} & = \sum_{n = 0}^\infty (g - f)^{*n} * h_n
}
defines an integrable function such that $\tilde{f} / \profile$ is bounded, and as in the proof of Lemma~\ref{lem:oper:2}, by Fubini's theorem we find that
\formula{
 \fourier \tilde{f} & = \sum_{n = 0}^\infty (\Psi - \Phi)^n (\ph / \Psi^{n + 1}) .
}
Thus, by~\eqref{eq:funct:wiener}, $\fourier \tilde{f} = \tilde{\Phi}$, and therefore $\tilde{\Phi} \in W_\profile^1$, as desired.
\end{proof}

As an immediate corollary of Lemmas~\ref{lem:funct:1} and~\ref{lem:funct:2}, we obtain the following result.

\begin{proposition}
\label{prop:funct}
The set $W_\profile^1$ defined in Definition~\ref{def:funct} is a Wiener-type algebra.
\end{proposition}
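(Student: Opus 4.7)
The plan is to simply assemble the two preceding lemmas, which already carry all the analytic content. Definition~\ref{def:algebra} requires three things of $W_\profile^1$: that it is an algebra of continuous functions, each of whose elements is a tempered distribution (condition~\ref{it:al:1}); that it is stable under multiplication by Schwartz functions (condition~\ref{it:al:2}); and that it satisfies the local Wiener $1/f$ property (condition~\ref{it:al:3}). Lemma~\ref{lem:funct:1} delivers the algebra structure together with conditions~\ref{it:al:1} and~\ref{it:al:2}, while Lemma~\ref{lem:funct:2} is precisely the statement of condition~\ref{it:al:3}. There is nothing else to verify, so the entire proof is one sentence invoking both lemmas.

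The only judgement call is how much to restate. Since Lemmas~\ref{lem:funct:1} and~\ref{lem:funct:2} were proved in immediate succession just before the proposition, and were clearly set up with this combination in mind, I would keep the proof to a minimum: name the two lemmas, cite Definition~\ref{def:algebra}, and close. No obstacle is anticipated — all of the real work (the convolution estimate using $\profile * \profile \le \profile$, the Wiener-type Neumann series argument controlled by the tail decay condition on $\profile$, and the smoothness/decay bookkeeping via~\eqref{eq:funct:bound}) has already been carried out.

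\begin{proof}
By Lemma~\ref{lem:funct:1}, $W_\profile^1$ is an algebra of continuous functions satisfying conditions~\ref{it:al:1} and~\ref{it:al:2} of Definition~\ref{def:algebra}. By Lemma~\ref{lem:funct:2}, $W_\profile^1$ satisfies condition~\ref{it:al:3}. Therefore $W_\profile^1$ is a Wiener-type algebra.
\end{proof}
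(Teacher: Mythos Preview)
Your proof is correct and matches the paper's approach exactly: the paper states the proposition as ``an immediate corollary of Lemmas~\ref{lem:funct:1} and~\ref{lem:funct:2}'' without giving a separate proof, and your one-line assembly of these two lemmas is precisely what is intended.
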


Suppose that $\profile$ satisfies the conditions of Definition~\ref{def:funct}, and $\check{L}$ is an integrable distribution such that for every $\ph \in \schw$ the Fourier transform of $(\check{L} * \ph) / \profile$ is a bounded function. Then, by definition, $\fourier \ph \cdot \fourier\check{L} = \fourier(\check{L} * \ph) \in W_\profile^1$. By choosing $\ph$ such that $\fourier \ph(\xi) = 1$ for $\xi$ in a given compact set, we find that $\fourier \check{L}$ belongs to $W_\profile^1$ locally on $\R^d$.

Let $\check{\profile}(x) = \profile(-x)$. We claim that if $h$ is a function on $\R^d$ such that $\check{\profile} h$ is integrable, then $h$ corresponds to a tempered distribution and $\fourier h$ acts on $W_\profile^1$. Indeed: by condition~\ref{it:funct:2} in Definition~\ref{def:funct}, $h$ is bounded by the product of an integrable function $\check{\profile} h$ and a polynomial, and hence it corresponds to a tempered distribution. Using both conditions in Definition~\ref{def:funct}, we find that for some constants $c, q$ we have $\profile(x - y) \le c (1 + |x|)^q \profile(-y)$ for every $x, y \in \R^d$. It follows that $h$ and $\profile$ are convolvable, and for some constants $c, q$ we have
\formula{
 |h| * \profile(x) & \le c |h| * \profile(0) (1 + |x|)^q .
}
Finally, if $\Phi, \Psi \in W_\profile^1$, then $\Phi = \fourier f$ and $\Psi = \fourier g$ for some integrable functions $f, g$ such that $f / \profile$ and $g / \profile$ are bounded. Thus, by condition~\ref{it:funct:1} in Definition~\ref{def:funct} and Fubini's theorem, we find that
\formula*[eq:funct:fubini]{
 |h| * |f| * |g|(x) & \le \|f / \profile\|_\infty \|g / \profile\|_\infty |h| * \profile * \profile(x) \\
 & \le \|f / \profile\|_\infty \|g / \profile\|_\infty |h| * \profile(x) \\
 & \le c \|f / \profile\|_\infty \|g / \profile\|_\infty |h| * \profile(0) (1 + |x|)^q
}
for every $x \in \R^d$. Therefore, by Fubini's theorem, we have
\formula{
 h * (f * g) & = (h * f) * g ,
}
and in fact, by~\eqref{eq:funct:fubini},
\formula{
 h \schwconv (f * g) & = (h \schwconv f) \schwconv g .
}
Applying the exchange formula, we conclude that
\formula{
 \fourier H \schwprod (\Phi \cdot \Psi) = (\fourier H \schwprod \Phi) \schwprod \Psi ,
}
which completes the proof of our claim.

After exchanging the roles of $\profile$ and $\check{\profile}$, Theorem~\ref{thm:liouville} and Proposition~\ref{prop:funct}, as well as the two properties discussed above, immediately lead to the following variant of Liouville's theorem.

\begin{corollary}[Liouville's theorem under generalised negative moment condition]
\label{cor:liouville:funct}
Suppose that $\profile$ satisfies the conditions of Definition~\ref{def:funct}. Let $\check{L}$ be an integrable distribution such that for every $\ph \in \schw$ there is a constant $c$ such that $|\check{L} * \ph(x)| \le c \profile(-x)$ for every $x \in \R^d$. Suppose that $\fourier \check{L}$, which is necessarily a continuous function, has no zeroes in an open set $U$. Let $h$ be a function such that $\profile h$ is integrable on $\R^d$ and $\check{L} \schwconv h = 0$. Then the spectrum of $h$ is contained in $\R^d \setminus U$.

In particular, if $U = \R^d \setminus \{0\}$, then $h$ is a polynomial.
\end{corollary}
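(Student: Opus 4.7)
The plan is to derive this statement as a direct consequence of the abstract Theorem~\ref{thm:liouville} applied to the Wiener-type algebra $W = W_{\check{\profile}}^1$, where $\check{\profile}(x) = \profile(-x)$. Since $\check{\profile}$ obviously satisfies the conditions of Definition~\ref{def:funct} whenever $\profile$ does, Proposition~\ref{prop:funct} guarantees that $W_{\check{\profile}}^1$ is indeed a Wiener-type algebra, so all that remains is to verify the two structural hypotheses of Theorem~\ref{thm:liouville}: that $\fourier \check{L}$ belongs to $W_{\check{\profile}}^1$ locally on $U$ (in fact on all of $\R^d$), and that $\fourier h$ acts on $W_{\check{\profile}}^1$. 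I expect these two verifications to constitute the entire content of the proof.

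For the first point, I would take an arbitrary compact set $K \subseteq \R^d$ and choose $\ph \in \schw$ whose Fourier transform equals $1$ on a neighbourhood of $K$. On this neighbourhood the identity $\fourier \check{L} = \fourier \ph \cdot \fourier \check{L} = \fourier(\check{L} * \ph)$ holds. The function $\check{L} * \ph$ is integrable (because $\check{L}$ is integrable as a tempered distribution), and the hypothesis gives a constant $c$ such that $|\check{L} * \ph(x)| \le c \profile(-x) = c \check{\profile}(x)$, that is, $(\check{L} * \ph) / \check{\profile}$ is bounded. By the very definition of $W_{\check{\profile}}^1$, this means $\fourier(\check{L} * \ph) \in W_{\check{\profile}}^1$, which is the required local property.

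For the second point, the function $h$ first needs to be identified with a tempered distribution; this is immediate since $\profile h$ is integrable and, by condition~\ref{it:funct:2} of Definition~\ref{def:funct}, $1/\profile$ grows at most polynomially. To check the acting property, fix $\Phi, \Psi \in W_{\check{\profile}}^1$, write $\Phi = \fourier f$, $\Psi = \fourier g$ with $f, g$ integrable and $f/\check{\profile}$, $g/\check{\profile}$ bounded, and imitate the argument given in the excerpt immediately before the statement of the corollary. Specifically, the inequality $\check{\profile} * \check{\profile} \le \check{\profile}$ combined with the polynomial lower bound on $\check{\profile}$ from condition~\ref{it:funct:2} yields an estimate of the form $|h| * |f| * |g|(x) \le C (1+|x|)^q$ for suitable constants, so Fubini's theorem justifies both the equality $h \schwconv (f * g) = (h \schwconv f) \schwconv g$ of $\schw'$-convolutions and, via the exchange formula, the associativity identity $\fourier h \schwprod (\Phi \cdot \Psi) = (\fourier h \schwprod \Phi) \schwprod \Psi$ required by Definition~\ref{def:act}.

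Once both verifications are in place, Theorem~\ref{thm:liouville} applied with $W = W_{\check{\profile}}^1$ concludes that the spectrum of $h$ is contained in $\R^d \setminus U$, and in the special case $U = \R^d \setminus \{0\}$ the spectrum is contained in $\{0\}$, forcing $h$ to be a polynomial. The main technical obstacle is the Fubini juggling in the second step: keeping track of which factors are dominated by $\check{\profile}$ and which only by a polynomial, while still producing an absolutely convergent triple integral, is exactly where conditions~\ref{it:funct:1} and~\ref{it:funct:2} of Definition~\ref{def:funct} must be used in tandem.
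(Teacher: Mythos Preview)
Your proposal is correct and follows essentially the same route as the paper: apply Theorem~\ref{thm:liouville} with the Wiener-type algebra $W_{\check{\profile}}^1$ (equivalently, the paper develops the two verifications for $W_\profile^1$ and then invokes the symmetry ``exchange the roles of $\profile$ and $\check{\profile}$''), checking that $\fourier\check{L}$ lies locally in this algebra via the pointwise bound on $\check{L}*\ph$, and that $\fourier h$ acts on it via the Fubini argument using conditions~\ref{it:funct:1} and~\ref{it:funct:2} of Definition~\ref{def:funct}. The two presentations differ only cosmetically.
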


We conclude this section with examples of admissible functions $\profile$. For this, we need the following auxiliary result, which resembles Theorem~1.1 in~\cite{bgo}.

\begin{lemma}
\label{lem:liouville:funct:radial}
Suppose that $\ph$ is a positive, decreasing, continuous function on $[0, \infty)$ such that $r^{d - 1} \ph(r)$ is integrable with respect to $r \in [0, \infty)$ and
\formula[eq:liouville:funct:radial:reg]{
 \liminf_{r \to \infty} \frac{\ph(2 r)}{\ph(r)} & > 0 .
}
Then, for $\eps > 0$ small enough, the function $\profile(x) = \eps \ph(|x|)$ satisfies all conditions of Definition~\ref{def:funct}.
\end{lemma}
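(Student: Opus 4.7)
The plan is to verify conditions~\ref{it:funct:1} and~\ref{it:funct:2} of Definition~\ref{def:funct} for $\profile(x) = \eps \ph(|x|)$, where $\eps > 0$ is chosen small at the end. Writing $F(x) = \ph(|x|)$, integrability of $F$ over $\R^d$ is immediate from integrability of $r^{d-1} \ph(r)$ on $[0, \infty)$, so the substantive work is a pointwise bound of the form $F * F \le C F$, its uniform tail version, and a polynomial lower bound on $\ph$.

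The hypothesis $\liminf_{r \to \infty} \ph(2r)/\ph(r) > 0$ yields constants $c_0 \in (0, 1)$ and $R_0 > 0$ such that $\ph(2 r) \ge c_0 \ph(r)$ for $r \ge R_0$, which, combined with monotonicity, gives $\ph(r/2) \le c_0^{-1} \ph(r)$ for $r \ge 2 R_0$. Iterating the doubling inequality produces $\ph(r) \ge C (1 + r)^{-q}$ with $q = -\log_2 c_0$; together with continuity and positivity of $\ph$, this yields condition~\ref{it:funct:2}.

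For the convolution estimate I split $F * F(x) = \int_{\R^d} F(y) F(x - y) dy$ at $|y| = |x|/2$. On $\{|y| \le |x|/2\}$ monotonicity gives $F(x - y) \le \ph(|x|/2)$, and on $\{|y| \ge |x|/2\}$ similarly $F(y) \le \ph(|x|/2)$, so
\formula{
 F * F(x) & \le 2 \|F\|_1 \, \ph(|x|/2) \le 2 c_0^{-1} \|F\|_1 \, F(x)
}
for $|x| \ge 2 R_0$. On the compact set $\{|x| \le 2 R_0\}$ both $F * F$ and $1/F$ are bounded by continuity and positivity of $\ph$, hence $F * F \le C F$ holds globally for some $C > 0$. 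Taking $\eps \le 1/C$ then gives $\profile * \profile = \eps^2 F * F \le \eps C \profile \le \profile$.

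The main difficulty is the uniform convergence $\sup_x \profile_r * \profile_r(x)/\profile(x) \to 0$. I split according to whether $|x| \ge 2 r$ or $|x| \le 2 r$. When $|x| \ge 2 r \ge 4 R_0$, the same dichotomy on $|y|$, restricted to the support $\{|y| \ge r,\, |x - y| \ge r\}$, yields
\formula{
 \profile_r * \profile_r(x) & \le 2 \eps^2 c_0^{-1} \ph(|x|) \int_{|z| \ge r} F(z) dz ,
}
so the ratio is bounded by $2 \eps c_0^{-1} \int_{|z| \ge r} F(z) dz$, which tends to zero by dominated convergence. When $|x| \le 2 r$, the pointwise bound $F \le \ph(r)$ on the support $\{|y| \ge r\}$, combined with the doubling estimate $\ph(|x|) \ge \ph(2 r) \ge c_0 \ph(r)$, gives
\formula{
 \frac{\profile_r * \profile_r(x)}{\profile(x)} & \le \frac{\eps^2 \ph(r) \int_{|z| \ge r} F(z) dz}{\eps c_0 \ph(r)} = \frac{\eps}{c_0} \int_{|z| \ge r} F(z) dz ,
}
which again tends to zero. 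The key point in this second case is that $\ph(r)$ cancels, so only the $L^1$-tail of $F$ remains; the two cases combined establish the required uniform convergence.
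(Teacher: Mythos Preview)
Your proof is correct and follows essentially the same strategy as the paper: bound one factor in the convolution by $\ph(|x|/2)$ using monotonicity, integrate the other, and then apply the doubling estimate. The only cosmetic difference is that the paper packages the splitting as the pointwise inequality $\ph(|x-y|)\ph(|y|) \le \ph(\tfrac{1}{2}|x|)\bigl(\ph(|x-y|)+\ph(|y|)\bigr)$ and uses a global doubling constant $c_2$ (absorbing the small-$|x|$ range via continuity), which lets it handle the tail ratio $\profile_r*\profile_r/\profile$ uniformly in $x$ without your $|x|\le 2r$ versus $|x|\ge 2r$ case split.
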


\begin{proof}
Clearly, $\profile$ is integrable over $\R^d$. We need to verify conditions~\ref{it:funct:1} and~\ref{it:funct:2} in Definition~\ref{def:funct}.

Since $\ph$ is decreasing, we have
\formula{
 \min\bigl\{\ph(|x - y|), \ph(|y|)\bigr\} & = \ph\bigl(\max\{|x - y|, |y|\}\bigr) \le \ph(\tfrac{1}{2} |x|) .
}
Hence,
\formula{
 \ph(|x - y|) \ph(|y|) & = \min\bigl\{\ph(|x - y|), \ph(|y|)\bigr\} \max\bigl\{\ph(|x - y|), \ph(|y|)\bigr\} \\
 & \le \ph(\tfrac{1}{2} |x|) \bigl(\ph(|x - y|) + \ph(|y|)\bigr) .
}
It follows that
\formula{
 \profile * \profile(x) & = \eps^2 \int_{\R^d} \ph(|x - y|) \ph(|y|) dy \\
 & \le \eps^2 \ph(\tfrac{1}{2} |x|) \int_{\R^d} \bigl(\ph(|x - y|) + \ph(|y|)\bigr) dy = 2 \eps^2 c_1 \ph(\tfrac{1}{2} |x|) ,
}
where $c_1$ is the integral of $\ph(|x|)$ over $x \in \R^d$. By assumption~\eqref{eq:liouville:funct:radial:reg}, there is a constant $c_2$ such that $\ph(\tfrac{1}{2} |x|) \le c_2 \ph(|x|)$ for every $x \in \R^d$. It follows that
\formula{
 \profile * \profile(x) & \le 2 c_1 c_2 \eps \profile(x) .
}
Hence, if $\eps$ is small enough, we have $\profile * \profile(x) \le \profile(x)$, as desired.

Let $\profile_r$ be defined as in condition~\ref{it:funct:1} in Definition~\ref{def:funct}, and let $B_r$ denote the centred ball of radius $r$. By the same argument as above, we have
\formula{
 \profile_r * \profile_r(x) & = \eps^2 \int_{\R^d} \ind_{\R^d \setminus B_r}(x - y) \ph(|x - y|) \ind_{\R^d \setminus B_r}(y) \ph(|y|) dy \\
 & \le \eps^2 \ph(\tfrac{1}{2} |x|)\int_{\R^d} \ind_{\R^d \setminus B_r}(x - y) \ind_{\R^d \setminus B_r}(y) \bigl( \ph(|x - y|) + \ph(|y|) \bigr) dy \\
 & \le c_2 \eps^2 \ph(|x|)\int_{\R^d} \bigl( \ind_{\R^d \setminus B_r}(x - y) \ph(|x - y|) + \ind_{\R^d \setminus B_r}(y) \ph(|y|) \bigr) dy \\
 & \le 2 c_2 \eps \profile(x) \int_{\R^d} \ind_{\R^d \setminus B_r}(y) \ph(|y|) dy .
}
Thus,
\formula{
 \lim_{r \to \infty} \sup \biggl\{\frac{\profile_r * \profile_r(x)}{\profile(x)} : x \in \R^d \biggr\} & \le \lim_{r \to \infty} \biggl(2 c_2 \eps \int_{\R^d} \ind_{\R^d \setminus B_r}(y) \ph(|y|) dy\biggr) = 0 .
}
We have thus proved that $\profile$ indeed satisfies condition~\ref{it:funct:1} in Definition~\ref{def:funct}.

Condition~\ref{it:funct:2} in Definition~\ref{def:funct} is related to general theory of regular variation, but in fact it is an elementary result: if, as above, $\ph(\tfrac{1}{2} |x|) \le c_2 \ph(|x|)$ for every $x \in \R^d$ and $c_3$ is the infimum of $\ph$ over $[0, 1]$, then for $x \in \R^d \setminus \{0\}$ such that $2^{n - 1} < |x| \le 2^n$, where $n = 0, 1, 2, \ldots$\,, we have
\formula{
 c_3 & \le \ph(2^{-n} |x|) \le c_2^n \ph(|x|) = c_2 (2^{n - 1})^{\log c_2 / \log 2} \ph(|x|) \le c_2 |x|^{\log c_2 / \log 2} \ph(|x|) ,
}
and hence $\ph(|x|) \ge (c_3 / c_2) (1 + |x|)^{-\log c_2 / \log 2}$ for every $x \in \R^d$, as desired.
\end{proof}

\begin{example}
\label{ex:liouville:funct:power}
Using Lemma~\ref{lem:liouville:funct:radial} it is straightforward to verify that if $\alpha > 0$, then $\profile(x) = c_{d, \alpha} (1 + |x|)^{-d - \alpha}$ satisfies the conditions of Definition~\ref{def:funct}. Thus, we get the following result.

Let $B$ denote the unit ball in $\R^d$. Suppose that $L$ is a Lévy operator which is not concentrated on a proper closed subgroup of $\R^d$, and for some constant $c$ the Lévy measure $\nu$ of $L$ satisfies
\formula{
 \nu(x + B) & \le c |x|^{-d - \alpha}
}
for every $x \in \R^d$ such that $|x| \ge 2$. Then every $L$-harmonic function $h$ such that $(1 + |x|)^{-d - \alpha} h(x)$ is an integrable function of $x \in \R^d$ is necessarily a polynomial.

Suppose now that $\nu$ has a density function which is comparable with $|x|^{-d - \alpha}$ for $|x|$ large enough. In this case our integrability condition on $h$, namely, that $(1 + |x|)^{-d - \alpha} h(x)$ is an integrable function, is automatically satisfied by every $L$-harmonic function in the weak sense (as defined in Definition~\ref{def:harmonic}). Thus, by Lemma~\ref{lem:harmonic:schw}, the result given above characterises all functions $h$ which are $L$-harmonic in the weak sense, with no additional conditions on $h$.

The above result is a variant of Theorem~1.4 in~\cite{fw}, while the more general result given in Corollary~\ref{cor:liouville:funct} with this choice of $\profile$ is a minor extension of Theorem~1.1 in~\cite{fw}.
\end{example}

\begin{example}
\label{ex:liouville:funct:power:log}
More generally, by Lemma~\ref{lem:liouville:funct:radial}, we find that if $\profile(x) = \eps \ph(|x|)$ for a positive, decreasing, continuous function $\ph$ on $[0, \infty)$ such that $\ph(|x|)$ is integrable with respect to $x \in \R^d$, and such that $\ph(r)$ is regularly varying as $r \to \infty$, then $\profile$ satisfies the conditions of Definition~\ref{def:funct} if $\eps$ is small enough. In particular, this implies the following result.

Let $\alpha > 0$ and $\beta \in \R$, or $\alpha > 0$ and $\beta > 1$. Let $L$ be a Lévy operator which is not concentrated on a proper closed subgroup of $\R^d$, and such that for some constant $c$ the Lévy measure $\nu$ of $L$ satisfies
\formula{
 \nu(x + B) & \le c |x|^{-d - \alpha} (\log |x|)^{-\beta}
}
for every $x \in \R^d$ such that $|x| \ge 2$ (where $B$ is the unit ball). Then every $L$-harmonic function $h$ such that $(1 + |x|)^{-d - \alpha} (\log(e + |x|))^{-\beta} h(x)$ is an integrable function of $x \in \R^d$ is necessarily a polynomial.

As in the previous example, if $\nu$ has a density function which is comparable with $|x|^{-d - \alpha} (\log |x|)^{-\beta}$ for $|x|$ large enough, then $L$-harmonic functions in the weak sense (see Definition~\ref{def:harmonic}) automatically satisfy the integrability condition, and hence, by Lemma~\ref{lem:harmonic:schw}, the above result describes all $L$-harmonic functions in the weak sense.
\end{example}

\begin{example}
\label{ex:liouville:funct:tensor}
Suppose that for $j = 1, 2, \ldots, k$ a function $\profile_j$ on $\R^{d_j}$ satisfies the conditions of Definition~\ref{def:funct}, and define $d = d_1 + d_2 + \ldots + d_k$ and
\formula{
 \profile(x) & = \prod_{j = 1}^k \profile_j(x_j) ,
}
where $x = (x_1, x_2, \ldots, x_k) \in \R^d$ with $x_j \in \R^{d_j}$. It is then immediate to check that $\profile$ satisfies the conditions of Definition~\ref{def:funct}. Combining this with Example~\ref{ex:liouville:funct:power}, we arrive at the following result.

Let us identify $\R^d$ with $\R^{d_1} \times \R^{d_2} \times \ldots \times \R^{d_k}$ by writing, as above, $x = (x_1, x_2, \ldots, x_k)$, where $x \in \R^d$ and $x_j \in \R^{d_j}$. Suppose that $L = L_1 + L_2 + \ldots + L_k$, where for every $j = 1, 2, \ldots, k$ the operator $L_j$ is a $d_j$-dimensional Lévy operator which is not concentrated on a proper closed subgroup of $\R^{d_j}$, acting in variable $x_j$. Furthermore, suppose that for some constants $c_j$ and $\alpha_j > 0$ the Lévy measure $\nu_j$ of $L_j$ satisfies
\formula{
 \nu_j(x_j + B_j) & \le c_j |x_j|^{-d_j - \alpha_j}
}
for every $x_j \in \R^{d_j}$ such that $|x_j| \ge 2$; here $B_j$ is the unit ball in $\R^{d_j}$. Then every $L$-harmonic function $h$ such that
\formula{
 \biggl(\prod_{j = 1}^d \frac{1}{(1 + |x_j|)^{d_j + \alpha_j}}\biggr) h(x)
}
is an integrable function of $x \in \R^d$ is necessarily a polynomial.

We remark that, unlike in the previous examples, the integrability condition on $h$ does not seem to follow automatically from the definition of an $L$-harmonic function in the weak sense.
\end{example}

%
%

\medskip

\subsection*{Conflict of interest statement}

On behalf of all authors, the corresponding author states that there is no conflict of interest.

\subsection*{Data availability statement}

Data sharing not applicable to this article as no datasets were generated or analysed during the current study.

%
%

%
%

\end{document}